\newcommand\emaillink[1]{\href{mailto:#1}{\nolinkurl{#1}}}
\title{Shotgun assembly of random graphs}
\author{Tom Johnston\thanks{School of Mathematics, University of Bristol, Bristol, BS8\thinspace1UG, UK and Heilbronn Institute for Mathematical Research, Bristol, UK. Email: \emaillink{tom.johnston@bristol.ac.uk}.} \and Gal Kronenberg\thanks{Mathematical Institute, University of Oxford, Andrew Wiles Building, Radcliffe Observatory Quarter, Woodstock Road, Oxford, United Kingdom.} \thanks{Email: \emaillink{kronenberg@maths.ox.ac.uk}. Supported by the European Union’s Horizon
2020 research and innovation programme under the Marie Sk\l odowska Curie grant agreement No. 101030925. }\and Alexander Roberts \and Alex Scott\footnotemark[2] \thanks{Email: \emaillink{scott@maths.ox.ac.uk}. Research supported by EPSRC grant EP/X013642/1.}}
\newtheorem{theorem}{Theorem}
\newtheorem{lemma}[theorem]{Lemma}
\newtheorem{claim}[theorem]{Claim}
\theoremstyle{definition}
\newtheorem{remark}{Remark}
\newtheorem{fact}{Fact}
\newtheorem*{qu*}{Question}
\DeclareMathOperator{\E}{\mathbb{E}}
\DeclareMathOperator{\bP}{\mathbb{P}}
\DeclareMathOperator{\Var}{Var}
\DeclareMathOperator{\Bin}{Bin}
\newcommand{\expec}[1]{\E \left[#1\right]}
\newcommand{\prob}[1]{\bP\left(#1 \right)}
\newcommand{\floor}[1]{\left\lfloor #1 \right\rfloor}
\newcommand{\ceil}[1]{\left\lceil #1 \right\rceil}
\newcommand{\expb}[1]{\exp \left( #1 \right)}
\let \eps \varepsilon
\begin{document}

\maketitle

\begin{abstract}
    \noindent
  In the graph shotgun assembly problem, we are given the balls of radius $r$ around each vertex of a graph and asked to reconstruct the graph.
  We study the shotgun assembly of the Erd\H{o}s-R\'enyi random graph $\mathcal G(n,p)$ for a wide range of values of $r$.
  We determine the threshold for reconstructibility for each $r\geq 3$, extending and improving substantially on results of Mossel and Ross for $r=3$. For $r=2$, we give upper and lower bounds that improve on results of Gaudio and Mossel by polynomial factors. We also give a sharpening of a result of Huang and Tikhomirov for $r=1$.
\end{abstract}

\section{Introduction}

When can we reconstruct a graph from local information?
In the \emph{shotgun assembly problem}, we are given the balls $N_r(v)$ of radius $r$ around each vertex of a graph $G$ and aim to reconstruct the graph from this information.  Problems of this type arise naturally in DNA shotgun assembly, where the goal is to reconstruct a DNA sequence from a collection of shorter stretches of the sequence (see \cite{dyer1994probability,arratia1996poisson,motahari2013information} among many references), and have also been considered in the neural network context \cite{soudry2015efficient}. The shotgun assembly problem for random graphs was introduced in an influential paper of Mossel and Ross \cite{mossel2017shotgun}, which also raised a number of interesting variants such as the reconstruction of random jigsaws (see \cite{nenadov2016unique,martinsson2016shotgun,balister2017reconstructing,martinsson2019linear,bordenave2020shotgun}) and random colourings of lattices (see \cite{przykucki2022shotgun, ding2023shotgun}). There has also recently been work on the closely related problem of reconstructing random pictures \cite{narayanan2022reconstructing}.

In this paper we will be concerned with the shotgun assembly of an Erd\H os-R\'enyi random graph $G\in \mathcal G(n,p)$, where each edge is open independently with probability $p = p(n)$. This problem has already been extensively studied  \cite{mossel2017shotgun,gaudio2020shotgun,huang2021shotgun,ding2022shotgun} (there is also interesting work on other random graphs including random regular graphs \cite{mossel2015shotgun}, random geometric graphs \cite{adhikari2022geometric} and random simplicial complexes \cite{adhikari2022shotgun}).
Let us start by defining the problem more carefully.
For a graph $G$, let ${N}_r^{(G)}(v)$ be the graph induced by the vertices at distance at most $r$ from $v$, where the vertices are unlabelled except for the vertex $v$.
For an integer $r\ge1$ and graphs $G$ and $H$, we say $G$ and $H$ {\em have isomorphic $r$-neighbourhoods} if there is a bijection $\phi:V(G)\to V(H)$ such that for each vertex $v$ of $G$ there is an isomorphism from the $r$-neighbourhood $N^{(G)}_r(v)$ around $v$ in $G$ to the $r$-neighbourhood $N^{(H)}_r(\phi(v))$ around $\phi(v)$ in $H$ which maps $v$ to $\phi(v)$.  We say that $G$ is  \emph{reconstructible from its $r$-neighbourhoods} (or \textit{$r$-reconstructible}) if every graph with $r$-neighbourhoods isomorphic to those of $G$ is isomorphic to $G$.  The general problem is to determine for what range of $p$ a random graph $G\in \mathcal G(n,p)$ is reconstructible (or non-reconstructible) from its $r$-neighbourhoods with high probability (i.e.~with probability tending to 1 as $n$ tends to infinity).  We improve on previous bounds for all values of $r$, and give a fairly complete picture for $r\ge 3$.

For very small $p$, the general picture is similar for all $r$.  Indeed, we show that at every radius $r$ there is a phase transition when $p$ is around  $n^{-\frac{2r+1}{2r}}$.
If $p = o(n^{-\frac{2r+1}{2r}})$, there are no paths of lengths $2r$ with high probability and every component is contained entirely in an $r$-ball. This means we reconstruct the graph by iteratively identifying and removing the largest components. On the other hand, if $p$ grows slightly faster than $n^{-\frac{2r+1}{2r}}$, then with high probability we obtain a graph that is not $r$-reconstructible.

The more difficult question is what happens for larger $p$.  It seems likely that for every radius $r$ there should be a second phase transition around some threshold $t=t(n)$. By which we mean that, if $p=\omega(t(n))$, then $G$ is with high probability reconstructible from its $r$-neighbourhoods, while if $p=o(t(n))$ and $p=\omega(n^{-\frac{2r+1}{2r}})$, then with high probability $G$ is not reconstructible from its $r$-neighbourhoods.  This was not previously known at any radius.  Our results here prove the existence of this second phase transition for all $r\ge 3$, and narrow the gap for $r=1,2$.
We start by giving our main results regarding $r\geq 3$, and then we discuss reconstruction from the 1- and 2-neighbourhoods and give some small improvements.

\bigskip\noindent{\em Radius 3:}
We begin by looking at reconstruction from balls of radius 3, and give the correct threshold for when $\mathcal{G}(n,p)$ is 3-reconstructible with high probability.
Mossel and Ross \cite{mossel2017shotgun} considered reconstruction from balls of radius 3 and showed that $G\in\mathcal{G}(n,p)$ is with high probability 3-reconstructible when $p=\omega(\log^2(n)/n)$. We improve on this result, and show that there are two phase transitions: the first is around $n^{-7/6}$, and the second is around $\frac {\log^2n}{n(\log \log n)^3}$.

\begin{theorem}\label{thm:r=3}
    Let $p=p(n)$ and $G\in\mathcal G(n,p)$.
    There exist $\beta>\alpha>0$ such that the following hold.
    \begin{enumerate}[label=(\roman*)]
        \item If $p = o(n^{-7/6})$, then $G$ is reconstructible from its $3$-neighbourhoods with high probability.
        \item If ${p}=\omega({n^{-7/6}})$ and $p \le \alpha \tfrac{\log^2n}{n(\log\log n)^3}$, then with high probability $G$ is not reconstructible from its $3$-neighbourhoods.
        \item If  $p \ge \beta\tfrac{\log^2n}{n(\log\log n)^3}$, then $G$ is reconstructible from its $3$-neighbourhoods with high probability.
    \end{enumerate}
\end{theorem}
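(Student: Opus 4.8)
The three parts are largely independent, but parts~(ii) and~(iii) both turn on one object, which I will call a \emph{bad pair at $v$}: a vertex $v$ together with two distinct neighbours $a,b$ whose ``depth-$2$ branches'' --- the rooted subtrees of $N_3(v)$ hanging off $a$, resp.\ $b$, in the direction away from $v$ --- are isomorphic. This is exactly the information about $a$ and $b$ that is visible inside $N_3(v)$, so a bad pair is precisely what stops us telling from $N_3(v)$ which neighbour is which, while the absence of any bad pair is what will let us glue the $3$-balls together. Part~(i) is separate and routine: if $p=o(n^{-7/6})$ then the expected number of trees on $7$ vertices is $O(n^7p^6)=o(1)$, and $7$-vertex subgraphs with more edges, as well as components on more than $7$ vertices (which contain a $7$-vertex tree), are even rarer; so whp every component has at most $6$ vertices, hence radius at most $3$, hence equals the $3$-ball around a central vertex. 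One then reconstructs by the peeling argument from the introduction: among all $3$-balls those of maximum order are exactly the largest components, each isomorphism type $T$ of a largest component determines the whole multiset of $3$-balls it contributes (one per vertex of $T$), so the number of copies of $T$ in $G$ can be read off, those balls deleted, and the process repeated.

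For~(ii) we exhibit, below the threshold, a rewiring that changes the isomorphism type of $G$ but no $3$-ball. In the very sparse part of the range (say $p=O(1/n)$, down to $p=\omega(n^{-7/6})$) we use a path-parity move: whp $G$ has at least two path components isomorphic to $P_7$ --- the expectation $\asymp n(np)^6$ tends to infinity and a second moment gives concentration --- and since for all $m\ge 6$ the $3$-neighbourhood multiset of $P_m$ is $\{P_4^{\,2},P_5^{\,2},P_6^{\,2},P_7^{\,m-6}\}$, the multisets of $P_7\sqcup P_7$ and $P_6\sqcup P_8$ coincide, so replacing one pair by the other keeps all $3$-balls and changes the graph. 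For larger $p$, up to the claimed threshold, $G$ has a large locally tree-like region, and there I look for a bad pair $(v;a,b)$, with branch isomorphism $\psi$, whose branches moreover differ at depth $3$; picking a $\psi$-matched pair of grandchildren $a_1,b_1$ of $v$ (so $a_1,b_1$ have the same number of children) and swapping their parents --- delete $aa_1,bb_1$ and add $ab_1,ba_1$ --- changes $G$ without changing any $N_3(w)$. The last point is a short case analysis: only vertices within distance $3$ of the altered edges could be affected, and for each of them the depth-$2$ isomorphism together with tree-likeness shows the ball is preserved up to rooted isomorphism; the depth-$3$ discrepancy (and the fact that $G$ is whp otherwise rigid enough) makes the rewired graph non-isomorphic to $G$.

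The quantitative core is the probability $q(d)$, $d=np$, that two independent typical depth-$2$ branches are isomorphic. Modelling a branch by a tree whose root has a $\mathrm{Poisson}(d)$ number of children, each with an independent $\mathrm{Poisson}(d)$ number of children, its isomorphism type is the profile $(h_j)_{j\ge0}$ with $h_j=\#\{\text{children having exactly }j\text{ children}\}$, and after Poissonisation the $h_j$ are independent with $h_j\sim\mathrm{Poisson}(\lambda_j)$, $\lambda_j=d\,\prob{\mathrm{Poisson}(d)=j}$. Hence $-\log q(d)=\sum_j c(\lambda_j)$, where $c(\lambda)=-\log\prob{X=Y}$ for independent mean-$\lambda$ Poissons, which is $2\lambda+O(\lambda^2)$ for small $\lambda$ and $\tfrac12\log\lambda+O(1)$ for large $\lambda$. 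The indices with $\lambda_j\gtrsim1$ are the $\asymp\sqrt{d\log d}$ values $j$ with $|j-d|\lesssim\sqrt{d\log d}$, each contributing $\Theta(\log d)$, and all other indices contribute $O(\sqrt d)$ in total; so $q(d)=\exp\!\big(-\Theta(\sqrt d\,(\log d)^{3/2})\big)$. The expected number of bad pairs is $\asymp n\,d^2\,q(d)$, which tends to infinity exactly when $\sqrt d\,(\log d)^{3/2}\le(1-\Omega(1))\log n$, i.e.\ when $p\le\alpha\tfrac{\log^2n}{n(\log\log n)^3}$ for a suitable small $\alpha$ (and a second moment then produces a bad pair whp, completing~(ii)), and tends to $0$ when $p\ge\beta\tfrac{\log^2n}{n(\log\log n)^3}$ for suitable large $\beta$, so by the first moment $G$ then has no bad pair at all.

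For~(iii) one combines ``no bad pair'' with two facts that hold whp in this range: $G$ is connected, and only $o(n)$ vertices (in fact only polylogarithmically many) have a cycle inside their $3$-ball. Then inside every tree-like $N_3(v)$ each neighbour of $v$ is uniquely identifiable, so each edge-slot of $N_3(v)$ has a unique ball on the other side to glue to; feeding this into the standard extension procedure --- from the collection of $3$-balls determine $N_4(v)$ for every $v$, then $N_5(v)$, and so on up to the diameter --- recovers $G$, with the few atypical vertices pinned down from their neighbours' reconstructed data. The main obstacle is the two-sided bound $q(d)=\exp(-\Theta(\sqrt d\,(\log d)^{3/2}))$: the lower bound for~(ii) and especially the upper bound for~(iii) must be established for genuine neighbourhoods in $\mathcal G(n,p)$ rather than in the idealised branching model, which requires controlling the dependence between nearby branches and, for the union bound, ruling out vertices of atypically large degree (whose many neighbour-pairs would otherwise dominate). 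Verifying radius-$3$ invisibility of the depth-$2$ swap and the second moment in~(ii) are the remaining technical points.
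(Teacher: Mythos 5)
Your part (i) is fine and is essentially the paper's peeling argument, and your entropy heuristic $q(d)=\exp(-\Theta(\sqrt d\,(\log d)^{3/2}))$ does identify the correct second threshold (it matches the paper's count of isomorphism classes of edge-neighbourhood structures and its 2-ball--uniqueness estimate). The gap is in the surgery for part (ii). Your swap deletes $aa_1,bb_1$ and adds $ab_1,ba_1$, where $a_1,b_1$ are $\psi$-matched children of $a,b$; this preserves $N_3(v)$, $N_3(a_1)$, $N_3(b_1)$ and the balls of more distant vertices, but it does \emph{not} preserve the balls of $a$ and $b$. Indeed $N_3(a)$ contains the grandchildren of $a_1$, which are at distance $4$ from $v$, hence invisible in $N_3(v)$ and completely unconstrained by the branch isomorphism; after the swap $N_3(a)$ instead contains the depth-$2$ subtree below $b_1$, and knowing only that $a_1$ and $b_1$ have the same number of children does not make $N_3^{G'}(a)\simeq N_3^{G}(a)$, nor does it give the cross-matching $N_3^{G'}(a)\simeq N_3^{G}(b)$ (that would require the multisets of depth-$2$ subtrees below the remaining children of $a$ and of $b$ to coincide, again distance-$4$ information). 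So the condition you can read off $N_3(v)$ is one level too shallow for the move you perform. The paper avoids exactly this by swapping two \emph{far-apart} edges $uv$ and $xy$ with isomorphic structures $H_{uv}\simeq H_{xy}$ (unions of the $2$-balls around the endpoints): each vertex near one edge sees at most depth $2$ beyond the swapped endpoint, which is exactly what $H$ controls, and the distance-at-least-$7$ requirement is what rules out accidental extra edges in the transplanted balls --- an issue your construction would also have to face, since your two swapped edges lie within distance $3$ of each other. Finally, "rigid enough" is not a proof of non-isomorphism; the paper certifies it via uniqueness of the $3$-balls around high-degree vertices, which needs its own argument.

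Part (iii) has a coverage gap. Your reconstruction rests on tree-like $3$-balls ("only polylogarithmically many vertices have a cycle inside their $3$-ball"), which fails on a large portion of the claimed range: already at $p=n^{-3/4}$ the expected number of $4$-cycles is $(np)^4/8=\Theta(n)$, so polynomially many vertices lie on short cycles and have non-tree $3$-balls, and local tree-likeness only holds when $np$ is at most a small power of $n$. The paper needs neither tree-likeness nor a bespoke gluing procedure: for $\beta\log^2 n/(n(\log\log n)^3)\le p\le n^{-2/3-\varepsilon}$ it proves that all $2$-balls are globally distinct (via distinctness of the degree multisets $D(x)$, an argument insensitive to cycles) and then applies the Mossel--Ross overlap lemma, while for $p\ge n^{-2/3-\delta}$ it invokes its separate $2$-neighbourhood reconstruction theorem, a substantial ingredient for which your sketch offers no substitute (the trivial observation that the $3$-ball is the whole graph only applies once the diameter is at most $3$, i.e.\ $p\gtrsim n^{-2/3}\log^{1/3}n$). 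Even in the sparse, genuinely tree-like regime, your extension procedure from "no bad pair at any vertex" to a global reconstruction is asserted rather than proved, whereas global uniqueness of $2$-balls makes the gluing immediate.
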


\bigskip\noindent{\em Radius 4 or more:}
A similar picture holds for any fixed radius $r\ge 4$ (and in fact even when $r$ grows slowly with $n$), except that the second phase transition comes earlier by a factor of roughly $\frac{\log n}{(\log \log n)^3}$.

\begin{theorem}\label{thm:r>3} Let $p=p(n)$ and $G\in\mathcal G(n,p)$.
    There exist $\beta>\alpha>0$ such that the following hold for all $4 \le r = o(\log n)$.
    \begin{enumerate}[label=(\roman*)]
        \item If $p = o(n^{-\frac{2r+1}{2r}})$, then $G$ is reconstructible from its $r$-neighbourhoods with high probability.
        \item If
              $p=\omega(n^{-\frac{2r+1}{2r}})$ and $p \le \alpha \tfrac{\log n}{rn}$, then with high probability $G$ is not reconstructible from its $r$-neighbourhoods.
        \item If $p \ge \beta\tfrac{\log n}{rn}$, then $G$ is reconstructible from its $r$-neighbourhoods with high probability.
    \end{enumerate}
\end{theorem}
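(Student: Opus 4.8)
The plan is to treat the three parts separately; parts~(i) and~(ii) rest on a first-moment estimate and a small swapping gadget, while part~(iii) is the main content. For~(i), when $p = o(n^{-(2r+1)/(2r)})$ the expected number of copies of a spanning tree on $2r+1$ vertices, $\binom{n}{2r+1}(2r+1)^{2r-1}p^{2r}$, is $o(1)$ (the factor $(2r+1)^{2r-1}$ being absorbed using $r = o(\log n)$); hence whp every component of $G$ has at most $2r$ vertices, so radius at most $r$, so equals $N_r(v)$ for a suitable root $v$. One then reconstructs $G$ by peeling: the largest ball in the collection is a whole component, from the balls of that order one reads off the number of components of each isomorphism type of that order, removes them, and recurses; every step is forced, so $G$ is the unique realizing graph. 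For~(ii), write $P_\ell$ for the path on $\ell+1$ vertices. A direct computation of $r$-balls shows that for $a,b \ge 2r-1$ the multiset of isomorphism types of the rooted $r$-balls of $P_a \sqcup P_b$ depends only on $a+b$; in particular $P_{2r} \sqcup P_{2r}$ and $P_{2r-1} \sqcup P_{2r+1}$ have the same collection of $r$-neighbourhoods but are non-isomorphic. I would then show that throughout the range $\omega(n^{-(2r+1)/(2r)}) = p \le \alpha\tfrac{\log n}{rn}$ with $\alpha < \tfrac12$ small, $G$ whp contains two components each isomorphic to $P_{2r}$: the expected number of such components is $\Theta\!\left(n^{2r+1}p^{2r}e^{-(2r+1)np}\right) = n^{\Omega(1)} \to \infty$, and it concentrates by a Poisson/second-moment argument since these are disjoint local configurations. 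Swapping one such pair for $P_{2r-1} \sqcup P_{2r+1}$ produces $G' \not\cong G$ with the same collection of $r$-neighbourhoods, so $G$ is not $r$-reconstructible.

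For~(iii), with $p \ge \beta\tfrac{\log n}{rn}$ and $\beta$ a sufficiently large constant, the plan is to pin down the coarse structure of $G$ and then give a forced reconstruction. Since $np \to \infty$, I would first show that whp: $G$ has a unique giant component $\Gamma$; every other component is a tree on at most $2r+1$ vertices (a first-moment calculation, using that the effective branching factor $np\,e^{-np}$ is $o(1)$, gives $\mathbb{E}[\#\text{components of order} \ge 2r+2] = n^{1-2\beta+o(1)} \to 0$ for $\beta > \tfrac12$); $G$ has no two components isomorphic to $P_{2r}$ (the same estimate); and every $v \in \Gamma$ has $|N_{r-1}(v)| > 2r+1$ (a branching-process estimate: a vertex whose $(r-1)$-neighbourhood is ``thin'' has probability $n^{-\Omega(\beta)}$). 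Consequently, in the collection $\mathcal B$, the balls of order greater than $2r+1$ are exactly $\{N_r(v) : v \in \Gamma\}$ and the remaining balls are precisely the small components; both parts are recognizable from $\mathcal B$.

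The key lemma is that whp $v \mapsto N_{r-1}(v)$ is injective on $V(\Gamma)$. Granting it, reconstruct $\Gamma$ as follows: identify $V(\Gamma)$ with the set of large balls; for each large ball $B = N_r(v)$ and each vertex $w$ at distance $1$ from its root, compute the radius-$(r-1)$ subball of $B$ around $w$ --- which by the triangle inequality equals $N_{r-1}^{G}(w)$ --- and match it to the unique large ball whose root has that $(r-1)$-neighbourhood; the edges $\{v,w\}$ so produced are exactly $E(\Gamma)$. The small components are reconstructed by the peeling argument of part~(i), and this is forced once one knows that $G$ has no two $P_{2r}$-components, since the only $r$-neighbourhood-preserving swap among graphs of order at most $2r+1$ is $P_{2r} \sqcup P_{2r} \leftrightarrow P_{2r-1} \sqcup P_{2r+1}$, whose image leaves that class. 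Since every property used is read off from $\mathcal B$ alone, it holds for any graph realizing the collection, and the reconstruction is then forced; hence $G$ is $r$-reconstructible.

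The main obstacle is the injectivity lemma. For the $\binom{n}{2}$ pairs $u,v$ at distance greater than $2(r-1)$, the two neighbourhoods are explored from disjoint vertex sets, hence are essentially independent, and the probability they are isomorphic is at most $\max_{B}\mathbb{P}\!\left(N_{r-1}(u) \cong B\right)$; for $r \ge 4$ the $(r-1)$-ball has depth at least $3$, and this maximum is $\exp\!\left(-\Omega\!\left((np)^{r-2}\log(np)\right)\right) = \exp(-\Omega(\log^2 n))$, comfortably below $n^{-2}$. For the $O\!\left(n(np)^{2(r-1)}\right)$ pairs at distance at most $2(r-1)$, where the neighbourhoods overlap in a large common part, one argues that the ``private'' parts of $N_{r-1}(u)$ and $N_{r-1}(v)$, each of order $\Omega\!\left((np)^{r-2}\right)$ and essentially independent of one another, are rich enough to force the per-pair collision probability below $1/\!\left(n(np)^{2(r-1)}\right)$. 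Making these richness estimates quantitative, and controlling the conditioning on the structural events above together with the dependence between overlapping neighbourhoods, is the delicate point --- and is exactly where $r \ge 4$ enters: for $r = 3$ the private part is only a depth-$2$ tree, the union bound closes only once $p$ is as large as $\log^2 n/(n(\log\log n)^3)$, and that case needs the separate, more careful argument behind Theorem~\ref{thm:r=3}.
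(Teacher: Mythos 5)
Parts (i) and (ii) of your proposal follow the paper's route (first moment for trees on $2r+1$ vertices plus the peeling argument; two path components on $2r+1$ vertices swapped for paths on $2r$ and $2r+2$ vertices, with a first/second moment count), and apart from a harmless quantitative slip (the expected number of such path components is only $\omega(1)$, not $n^{\Omega(1)}$, near the lower end $p=\omega(n^{-(2r+1)/(2r)})$) these are fine.

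Part (iii), however, has a genuine gap: your key lemma --- that $v\mapsto N_{r-1}(v)$ is whp injective on the giant component when $p\ge\beta\frac{\log n}{rn}$ --- is false once $r$ is large compared to the constant $\beta$. At $p=\beta\frac{\log n}{rn}$ the expected number of vertices $w$ having two neighbours $u,v$ of degree one is of order $n(np)^2e^{-2np}=n^{1-2\beta/r+o(1)}$, which tends to infinity for every fixed $\beta$ as soon as $r>2\beta$ (still well within $r=o(\log n)$), and a routine second-moment argument gives such configurations whp, with $w$ typically in the giant component. For such a pair, $N_{r-1}(u)$ and $N_{r-1}(v)$ are induced on the \emph{same} vertex set $\{z: d(z,w)\le r-2\}$, and the map swapping the two pendants is an automorphism carrying one root to the other, so $u\neq v$ are giant-component vertices with isomorphic $(r-1)$-neighbourhoods. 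This also shows that your estimate $\max_B\prob{N_{r-1}(u)\cong B}\le\expb{-\Omega((np)^{r-2}\log(np))}$ cannot be right as stated: thin, low-degree ball types carry probability only $n^{-O(\beta/r)\cdot r}$-type polynomial factors, and it is exactly these types (and the overlapping pairs they create) that defeat a union bound uniform in $4\le r=o(\log n)$. The paper explicitly anticipates this failure of $(r-1)$-ball uniqueness over the full range and routes around it: it proves uniqueness of $3$-balls only among vertices of degree at least $np/2$ (\Cref{clauniq3}), reconstructs the subgraph on those vertices by the overlap method, and then uses \Cref{claim:CompSizeHighDegVxs} (components of the low-degree subgraph have at most $r-3$ vertices) to recover the low-degree components and their attachments from $r$-balls, since the $r$-ball of a low-degree vertex contains the (unique) $3$-balls of the adjacent high-degree vertices; the upper end of the range is delegated to \Cref{thm:r=3}(iii). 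Your proposal contains no mechanism separating low-degree vertices, so the "forced matching" step of your reconstruction breaks precisely at these twin-like configurations; in addition, your classification claim that the only ball-preserving swap among graphs on at most $2r+1$ vertices is the path swap is asserted without proof, though in this regime it could be avoided by showing whp no small component has diameter exceeding $r$.
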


Recently, Gaudio, R\'acz and Sridhar \cite{gaudio2022average} independently studied the special case of $r=4$ as part of their work on local canonical labellings of Erd\H{o}s-R\'enyi graphs and showed that $G \in \mathcal{G}(n,p)$ is 4-reconstructible with high probability when $np \geq (1 + \delta) \log n$. 

\bigskip\noindent{\em Radius 2:}  We next move to the case where $r=2$. It is not hard to see that if $p = \omega(\sqrt{{\log (n)}/{n}})$, then $G\in\mathcal G(n,p)$ is 2-reconstructible with high probability as the diameter of $G$ is at most 2 with high probability (and so the 2-balls are the entire graph). Better bounds were given by Gaudio and Mossel \cite{gaudio2020shotgun} who showed that, for any $\eps > 0$, $G$ is 2-reconstructible with high probability when $n^{-3/5 + \eps} \leq p \leq n^{-1/2 - \eps}$ or $p \geq n^{-1/2 + \eps}$.  We extend the range at the lower end, and remove the gap in the middle.

\begin{theorem}
    \label{thm:r=2}  Let $p=p(n)$ and $G\in\mathcal G(n,p)$.
    There exists a constant $\delta > 0$ such that the following holds. If $p \geq n^{-2/3 - \delta}$,  then $G$ is reconstructible from its 2-neighbourhoods with high probability.
\end{theorem}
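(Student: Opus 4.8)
The plan is to reconstruct $G$ by recognising the pairwise overlaps of the given $2$-balls and re-gluing the balls along them; first I would dispose of the dense regime. If $p\ge 2\sqrt{(\log n)/n}$ then $\mathcal G(n,p)$ has diameter at most $2$ with high probability, so $N_2(v)=G$ for every $v$ and there is nothing to prove. Hence we may assume $n^{-2/3-\delta}\le p\le 2\sqrt{(\log n)/n}$; throughout this range $np\to\infty$, $np^2=O(\log n)$, and $|N_2(v)|=(1+o(1))\min\{n,(np)^2\}$ for all $v$ with high probability. The structural fact the argument rests on is this: for every \emph{edge} $uv$ of $G$, each of $N_2(u)$ and $N_2(v)$ contains, as an induced subgraph with the root and the edge $uv$ marked, a faithful copy of $O_{uv}:=G[N_2(u)\cap N_2(v)]$ --- every vertex within distance $1$ of $u$ or of $v$ is within distance $2$ of both, and every edge inside $N_2(u)\cap N_2(v)$ appears in both $2$-balls. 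Reconstruction then reduces to recognising these overlaps.

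Next I would record the properties of $G\sim\mathcal G(n,p)$ used below, all holding with high probability: (i) every degree is $(1+o(1))np$; (ii) $|N(x)\cap N(y)|=O(\log n)$ for all $x\ne y$, and $O(1)$ when $p$ is close to $n^{-2/3}$; (iii) for every $v$ the first and second neighbourhoods have their expected sizes and the bipartite graph between them has no large symmetric part; and --- crucially --- (iv) a quantitative \emph{richness} of edge-overlaps, which I would phrase as: the doubly-rooted isomorphism types of the overlaps $O_e$, $e\in E(G)$, are pairwise distinct. The point is that $O_{uv}$ is the union of the stars at $u$ and at $v$ together with a set $C_{uv}$ of \emph{connectors} --- the vertices at distance exactly $2$ from both $u$ and $v$, each joined to at least one neighbour of each of $u$ and $v$ (in fact, generically, to exactly one). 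One checks $|C_{uv}|=n^{\Omega(1)}$ across the whole range --- which is where $\delta$ must be taken a sufficiently small absolute constant --- so that the attachment pattern of $C_{uv}$ is a near-uniformly random bipartite structure on $\sim np$ points; consequently each isomorphism type of $O_{uv}$ occurs with probability at most $n^{-\omega(1)}$ and overlaps of different edges are essentially independent. A union bound over the at most $n^4$ relevant quadruples of vertices then yields (iv) for pairs of edges that are far apart; pairs of edges sharing a vertex or at distance $1$ or $2$ are dependent and I would treat them by a separate, direct structural argument.

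For the reconstruction, write $\{A_v\}_v$ for the given rooted isomorphism types of the $N_2(v)$. For each $v$ and each first-neighbour slot $u$ of $A_v$ form the doubly-rooted graph $\widehat O_{v,u}$ induced in $A_v$ on $\{u\}$, the $A_v$-neighbours of $u$, and the vertices of $A_v$ at distance $2$ from $u$, rooted at $v$ and then at $u$; when the slot $u$ is the true neighbour $w\in N(v)$ one verifies $\widehat O_{v,w}=O_{vw}$ with $v,w$ marked. I would then match slots across balls: declare $v\sim w$ and identify the corresponding slots exactly when there are a first-neighbour slot $u$ of $A_v$ and a first-neighbour slot $u'$ of $A_w$ with $\widehat O_{v,u}\cong\widehat O_{w,u'}$ under the correspondence sending the root of $A_v$ to the slot $u'$ and the slot $u$ to the root of $A_w$. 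When $v\sim w$ this holds for the true slots; conversely, any match --- whether or not $v\sim w$ --- identifies the overlaps of two edges $\{v,\tilde u\}$ and $\{w,\tilde u'\}$, and a short case check shows these edges are distinct unless the match is the intended one, so by (iv) no spurious declaration or ambiguous gluing occurs. Having recovered all adjacencies and a consistent labelling of every slot, $G$ is the union of the resulting labelled $1$-balls. Since every step uses only isomorphism types, any $H$ with isomorphic $2$-neighbourhoods produces the same reconstruction and the same structural certificates, so $H\cong G$.

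The step I expect to be the main obstacle is establishing (iv) uniformly over the window $n^{-2/3-\delta}\le p\le 2\sqrt{(\log n)/n}$: the overlap $O_{uv}$ changes character drastically across it --- from barely more than two stars plus $\sim n^{1/3}$ connectors near $p=n^{-2/3}$ to an almost spanning dense graph near $p=\sqrt{(\log n)/n}$ --- so the $\omega(\log n)$ ``bits'' that power the union bound, and their near-independence across edges, must be located and justified regime by regime, and near the lower end this is precisely what forces $\delta$ to be a small constant. The subsidiary difficulty is the analysis of nearby pairs of edges, where the two overlaps are genuinely dependent and one must argue directly that the star-plus-connector signature cannot be duplicated.
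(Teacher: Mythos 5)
Your outline matches the paper's strategy: the signature you attach to an edge $uv$ --- the doubly-rooted overlap $G[N_2(u)\cap N_2(v)]$, which you correctly observe is computable from either endpoint's $2$-ball --- is exactly the paper's coloured object $C_{uv}$, and the plan ``show signatures of distinct edges don't collide, then glue'' is the plan of the paper. But the step you yourself flag as the main obstacle, your property (iv), is the entire technical content of the theorem, and the heuristic you offer for it does not survive scrutiny near the bottom of the range. At $p=n^{-2/3-\delta}$ the number of second-neighbourhood edges per leaf is Poisson-like with mean $n^2p^3=n^{-3\delta}\to 0$: the overlap is two stars with only a sparse sprinkling of attachments, so there are \emph{not} ``$\omega(\log n)$ near-independent bits''; each usable comparison (the count of leaves of a given second-degree) is anti-concentrated only to polynomial accuracy, and the collision probability for a fixed disjoint pair of edges is at best a fixed negative power of $n$, not $n^{-\omega(1)}$. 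Getting that power above $4$ (so the union bound over quadruples closes) is where all the work lies: the paper conditions away an ``error set'' of vertices adjacent to two of $u,v,x,y$, controls edges revealed by exposing the relevant neighbourhoods, and then iteratively reveals the counts $Z_{k_i}$ for a large constant number $\ell$ of target values $k_i$ near the mean, bounding each matching event via Rogozin's anti-concentration theorem combined with a local limit theorem (for $p=\omega(n^{-2/3})$) or Le Cam's Poisson approximation (for $p\le n^{-2/3}\log\log n$), finally choosing $\ell$ large and $\delta$ small so the product of the per-step bounds is $o(n^{-4})$. None of this is present or sketched in your proposal; ``the attachment pattern is a near-uniformly random bipartite structure, hence collisions have probability $n^{-\omega(1)}$ and overlaps are essentially independent'' is an assertion of the conclusion, and the independence claim is precisely what fails without the conditioning machinery.

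A second, structural gap: your gluing rule (match a slot of $A_v$ with a slot of $A_w$ under a root-swapping isomorphism) can produce spurious matches coming from two edges that \emph{share} a vertex (e.g.\ an isomorphism $C_{vx}\to C_{xw}$ sending $v\mapsto x$, $x\mapsto w$), so your scheme genuinely needs signature-distinctness for adjacent and nearby edges as well, which you defer to an unspecified ``direct structural argument''; these pairs are heavily dependent (they share a whole star) and it is not clear the statement is even easy, or true in the form you need, across the whole window. The paper sidesteps this entirely with a small but important observation (its Lemma~4.1): it suffices that any two edges of the same colour share a vertex, because then each colour class is a star or a triangle and can be rebuilt from the colour-degree counts at the vertices. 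With that lemma, only disjoint pairs of edges ever need to be compared, which is exactly what the $o(n^{-4})$ estimate covers. I would also note that widening the window to $2\sqrt{\log n/n}$ (rather than invoking the known result for $p\ge n^{-1/2+\eps}$ and working below $n^{-16/35}$, as the paper does) forces your collision estimate to cover regimes where the overlap is nearly spanning, adding further case analysis for no gain.
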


For slightly sparser graphs, Gaudio and Mossel \cite{gaudio2020shotgun} showed that $G$ fails to be 2-reconstructible with high probability when $n^{-1 + \eps} \leq p \leq n^{-3/4 - \eps}$. We extend this range in both directions as follows.

\begin{theorem}\label{thm:r=2-non}  Let $p=p(n)$ and $G\in\mathcal G(n,p)$.
    If  $p \leq \frac{1}{3} n^{-3/4} \log^{1/4}n$ and $p=\omega(n^{-5/4})$, then with high probability $G$ cannot be reconstructed from its 2-neighbourhoods.
\end{theorem}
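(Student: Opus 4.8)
The plan is to prove non-reconstructibility by exhibiting, for a typical $G\in\mathcal G(n,p)$, a graph $G'$ on the same vertex set with $G'\not\cong G$ but with the same collection of $2$-neighbourhoods as $G$ (witnessed by the identity bijection). In every part of the range $G'$ will be obtained from $G$ by a single \emph{edge switch}: delete two edges $\{a_1,b_1\},\{a_2,b_2\}$ and insert $\{a_1,b_2\},\{a_2,b_1\}$, where the two edges are chosen to have ``clean'' surroundings (locally tree-like, free of short cycles) and matching endpoint degrees, $\deg a_1=\deg a_2$ and $\deg b_1=\deg b_2$. Under these conditions the only effect on any ball $N_2(v)$ is that certain pendant vertices, or pendant ``brooms'', get replaced by isomorphic ones, so no $2$-neighbourhood changes up to rooted isomorphism; checking this is a somewhat lengthy but mechanical case analysis over the vertices within distance $2$ of the four affected vertices.

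For the sparse part of the range, roughly $\omega(n^{-5/4})\le p\le \tfrac16\log n/n$, we realise the switch between two disjoint path components. Here $\mathcal G(n,p)$ whp has at least two components isomorphic to $P_5$ — the expected number is $\asymp n^5p^4e^{-5np}\to\infty$ in this range, and the second moment is easily controlled — and writing them as $u_1u_2u_3u_4u_5$ and $w_1w_2w_3w_4w_5$ we switch the internal edges $\{u_2,u_3\}$ and $\{w_3,w_4\}$ (all four endpoints have degree $2$) to the pair $\{u_2,w_4\},\{u_3,w_3\}$. A direct inspection of the ten affected balls shows that each is unchanged up to rooted isomorphism, while the two copies of $P_5$ have become a $P_4$ and a $P_6$; hence $G'\not\cong G$, since the multiset of component orders has changed. (Below $n^{-5/4}$ this fails precisely because every component then has at most $4$ vertices and sits inside a single $2$-ball, which is exactly why the hypothesis $p=\omega(n^{-5/4})$ is needed.)

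For the denser part, roughly $\omega(1/n)\le p\le\tfrac13n^{-3/4}(\log n)^{1/4}$, there are no path components and the switch must be performed locally. We look for a \emph{clean local configuration}: a vertex $v$ lying in no triangle and no $4$-cycle, together with two neighbours $c_1,c_2$ of $v$ with $\deg c_1=\deg c_2$, and further neighbours $a_1\in N(c_1)\setminus\{v\}$, $a_2\in N(c_2)\setminus\{v\}$ with $\deg a_1=\deg a_2$, subject to a short list of ``no short cycle'' conditions (roughly: $a_1\not\sim a_2$, neither $a_i$ has a neighbour at distance $2$ from $v$, and neither edge $\{c_i,a_i\}$ lies on a cycle of length at most $5$). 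Given such a configuration we switch $\{c_1,a_1\},\{c_2,a_2\}$ to $\{c_1,a_2\},\{c_2,a_1\}$, i.e.\ move $a_1$ from the branch of $c_1$ to the branch of $c_2$ and vice versa; the listed conditions are precisely what guarantee that every affected ball $N_2(w)$ sees only a pendant/broom exchange with matching parameters, so the $2$-neighbourhoods are preserved. Once $np\to\infty$, the fact that $G'\not\cong G$ follows from standard rigidity considerations for $\mathcal G(n,p)$ (the switch changes one pair of edges and cannot be absorbed by an automorphism).

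The crux — and the source of the threshold $n^{-3/4}(\log n)^{1/4}$ — is to show that a clean local configuration exists with high probability throughout the denser range. The number of candidate tuples $(v,c_1,c_2,a_1,a_2)$ is $\asymp n^5p^4$, while the probability that a fixed tuple satisfies all the cleanness conditions is $\exp(-\Theta(n^3p^4))$ up to lower-order factors: each short-cycle-avoidance condition (most importantly ``$v$ in no $4$-cycle'' and ``$\{c_i,a_i\}$ in no cycle of length $\le 5$'') costs a factor $\exp(-\Theta(n^3p^4))$, since $n^3p^4$ is the order of the expected number of such cycles through a fixed vertex or edge. As $n^3p^4=\Theta(\log n)$ exactly when $p=\Theta(n^{-3/4}(\log n)^{1/4})$, the expected number of clean configurations is $n^5p^4\cdot\exp(-\Theta(n^3p^4))$, which is $n^{2-o(1)}$ at the upper end (and far larger for smaller $p$) and tends to infinity provided the implicit constant stays below $2$ — the factor $\tfrac13$ gives ample room for this. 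A routine pigeonhole over the $n^{o(1)}$ possible degree pairs, together with a second-moment estimate, then produces whp an actual clean configuration with the required degree equalities. I expect this last step — pinning down the exact list of cleanness conditions so that the switch is genuinely invisible at radius $2$ while keeping the failure probability at $\exp(-\Theta(n^3p^4))$ — to be the main obstacle; the $2$-ball bookkeeping and the sparse-regime argument are comparatively routine.
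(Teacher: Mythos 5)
Your overall strategy matches the paper's: handle the sparse range via two $P_5$ components (the paper does this through \Cref{claim:2IdenticalPaths} and \Cref{thm:firstPhaseTransition}), and in the denser range perform a degree-matched two-edge switch whose cost is governed by short-cycle/neighbourhood-disjointness events of probability $e^{-\Theta(n^3p^4)}$, which is exactly what produces the threshold $n^{-3/4}\log^{1/4}n$. Your switch configuration (swapping $c_1a_1,c_2a_2$ across a path $a_1c_1vc_2a_2$) differs from the paper's (which swaps two edges $uv$, $xy$ whose one-sided $2$-neighbourhoods are disjoint, with the cross-matching $d(x)=d(v)$, $d(y)=d(u)$), and with a complete cleanness list your version of the $2$-ball bookkeeping could be made to work; your list as stated is not sufficient (e.g.\ you must also forbid paths of length $4$ between $a_1$ and $c_2$ and between $a_2$ and $c_1$, since otherwise the new edges of $G'$ lie on $5$-cycles and the ball of $c_1$ or $a_1$ acquires an extra edge), but those extra events still cost only $e^{-\Theta(n^3p^4)}$, and although ``$n^{o(1)}$ possible degree pairs'' is wrong (typical degrees spread over polynomially many values, so the two equalities cost $\Theta(1/np)$), there is enough slack for the first and second moment computation to go through.

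The genuine gap is the claim that $G'\not\cong G$ ``follows from standard rigidity considerations''. Triviality of $\mathrm{Aut}(G)$ does not rule out an isomorphism $G\to G'$, and your construction is engineered precisely so that every natural invariant agrees: $G$ and $G'$ have the same degree sequence, the same multiset of $2$-balls, and (given the cleanness conditions and $\deg a_1=\deg a_2$, $\deg c_1=\deg c_2$) even the same multiset of pairs $(d(w),|\Gamma_2(w)|)$. So a distinguishing statistic has to be built into the configuration; nothing in your conditions provides one, and without it the event ``$G'\cong G$'' is simply not addressed. The paper resolves this by demanding that the four switch vertices have matching degrees across the switch but \emph{pairwise distinct} second-neighbourhood sizes (conditions 2 and 4 in its proof), and then observing that the number of edges joining a vertex of second-neighbourhood size $|\Gamma_2(x)|$ to one of size $|\Gamma_2(y)|$ drops by exactly one — which requires the further disjointness condition 7 to ensure no other vertex's invariants move. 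To repair your argument you would need to add analogous requirements (e.g.\ $|\Gamma_2(a_1)|\neq|\Gamma_2(a_2)|$ together with enough disjointness to show the switch changes only the one edge-type count); this is a substantive missing ingredient rather than routine bookkeeping, and filling it essentially reproduces the paper's argument.
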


Once again,  the lower bound on $p$ in \Cref{thm:r=2-non} is best possible.

\begin{theorem}\label{thm:r=2-lower}  Let $p=p(n)$ and $G\in\mathcal G(n,p)$.
    If $p = o(n^{-5/4})$, then with high probability $G$ is reconstructible from its 2-neighbourhoods.
\end{theorem}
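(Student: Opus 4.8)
The plan is to exploit the fact that $n^{-5/4}$ is exactly the point below which $\mathcal G(n,p)$ shatters into tiny pieces. When $p=o(n^{-5/4})$ I expect that, with high probability, $G$ is a disjoint union of trees each on at most four vertices, and reconstruction then becomes a bookkeeping exercise on the multiset of $2$-balls. So the first step is a routine first-moment computation: the expected number of cycles in $G$ is at most $\sum_{\ell\ge 3}(np)^\ell=o(1)$ since $np=o(n^{-1/4})$, and the expected number of copies of a tree on five vertices is $O(n^5p^4)=o(1)$ since $p=o(n^{-5/4})$. As any connected graph on at least five vertices contains a subtree on five vertices, this shows that with high probability every component of $G$ is one of $K_1,K_2,P_3,P_4,K_{1,3}$. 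I condition on this event from now on.

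The point that needs real care is that $2$-reconstructibility concerns \emph{every} graph $H$ whose $2$-neighbourhoods agree with those of $G$, not just $G$ itself. Here the key observation is that each ball $N_2^{(H)}(v)$ is isomorphic to some $N_2^{(G)}(w)$, hence is a tree on at most four vertices. I then want a structural lemma: any graph all of whose $2$-balls are trees on at most four vertices is itself a forest whose components have at most four vertices. This lemma is the technical heart of the argument, though it is elementary. A short cycle (length $3$, $4$ or $5$) lies inside the $2$-ball of a vertex on it, making that ball non-acyclic; a longer cycle forces some $2$-ball to contain five distinct vertices; and a tree component on at least five vertices either has diameter at most $3$, so that it coincides with the $2$-ball of its centre, or has diameter at least $4$, in which case the middle vertex of a longest path sees five vertices within distance $2$. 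Each case contradicts the hypothesis on the $2$-balls.

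Granting the lemma, both $G$ and $H$ are disjoint unions of copies of $K_1,K_2,P_3,P_4,K_{1,3}$, and it remains to verify that the multiset of $2$-neighbourhoods of such a graph pins down the number of components of each of the five types. This is a finite check: the number of vertices $v$ with $N_2(v)\cong K_1$ equals the number of $K_1$-components; a rooted $K_2$ is contributed exactly twice per $K_2$-component; a $P_3$ rooted at its centre exactly once per $P_3$-component; a $P_4$ rooted at an internal vertex exactly twice per $P_4$-component; and a $K_{1,3}$ rooted at its centre exactly once per $K_{1,3}$-component. These five counts can be read off directly (the remaining rooted shapes, such as a $P_3$ rooted at an endpoint, which is contributed both by $P_3$- and by $P_4$-components, then merely serve as consistency checks). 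Hence $H$ has the same component multiset as $G$, so $H\cong G$, which is precisely $2$-reconstructibility.

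I expect the only genuine obstacle to be making the structural lemma of the second paragraph watertight — in particular handling the various cycle lengths and the long-path case cleanly — together with a little vigilance in enumerating which rooted $2$-balls arise from a $P_4$ (since its endpoints contribute the same rooted $P_3$ as a genuine $P_3$-component); none of this is deep, as the whole difficulty has been removed by the assumption $p=o(n^{-5/4})$, which makes the components trivially small.
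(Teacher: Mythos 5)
Your proposal is correct, and its first half coincides with the paper's: the paper also deduces the statement from the first-moment bound that the expected number of $5$-vertex trees is $O(n^5p^4)=o(1)$, so that w.h.p.\ every component has at most $4$ vertices (Lemma \ref{claim:maxComp}). Where you diverge is in the reconstruction step. The paper does not classify components at all: it observes that if some component of a graph has at least $2r+1$ vertices then some $r$-ball has at least $2r+1$ vertices, and then runs a greedy peeling argument --- a ball of maximum size must be an entire (largest) component, so one identifies it, deletes the balls of its vertices from the collection, and recurses --- which works verbatim for every $r\ge 1$ and needs neither acyclicity nor any enumeration of component types. You instead prove a structural lemma (any graph whose $2$-balls are all trees on at most four vertices is a forest with components among $K_1,K_2,P_3,P_4,K_{1,3}$), which plays the role of the paper's ``large component forces a large ball'' observation and correctly handles the quantification over all graphs $H$ with the same balls, and you then read off the multiplicity of each component type from the counts of five distinguished rooted ball shapes; your bookkeeping there is right (the rooted $P_3$-at-centre, rooted $P_4$-at-internal-vertex, etc., are each produced by exactly one component type). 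The trade-off: your route requires the extra (easy) cycle-exclusion first moment and a case analysis that is specific to $r=2$ and would not scale gracefully to larger $r$, but in exchange it is fully explicit and makes the ``every $H$'' step and the final identification of $H$ with $G$ completely concrete; the paper's peeling argument is shorter, avoids the classification entirely, and is stated once for all radii.
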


We note that there is still a gap where we do not know whether $\mathcal G$ can be reconstructed with high probability, and it would be interesting to remove this.

\begin{qu*}
    Determine when $\mathcal G(n,p)$ is  2-reconstructible. Is there a threshold around $n^{-3/4}$ (up to a polylogarithmic factor)?
\end{qu*}

\bigskip\noindent{\em Radius 1:}
We finish this section by looking at reconstruction from balls of radius 1.
Gaudio and Mossel \cite{gaudio2020shotgun}  showed that, for any $\eps > 0$, a random graph $G\in\mathcal G(n,p)$ is
1-reconstructible with high probability when $n^{-1/3 + \eps} \leq p \leq n^{-\eps}$; and fails to be 1-reconstructible with high probability when $n^{-1 + \eps} \leq p \leq n^{-1/2 - \eps}$. This was recently improved in an impressive paper of Huang and Tikhomirov \cite{huang2021shotgun} which showed that there are constants $c, C> 0$ such that
$G$ is 1-reconstructible with high probability when $n^{-1/2} \log^{C}n \leq p \leq c$, while $G$ fails to be 1-reconstructible if $p=o(1/\sqrt n)$ and  $p=\omega(\log (n) / n)$. This shows that there is a change of behaviour around $n^{-1/2}$, up to a polylogarithmic gap. We give a small improvement on the region where $G$ fails to be 1-reconstructible: we improve the lower bound, and give a slight sharpening of the upper bound. Note that in particular this shows that some polylogarithmic factor is indeed necessary.

\begin{theorem}\label{thm:r=1} Let $p=p(n)$ and $G\in\mathcal G(n,p)$.
    If $p=\omega(n^{-3/2})$ and $p \leq  \sqrt{\frac{\log n }{25n}}$, then with high probability $G$ cannot be reconstructed from its 1-neighbourhoods.
\end{theorem}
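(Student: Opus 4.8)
The plan is to produce, with high probability, an explicit local modification of $G$ that leaves every rooted $1$‑ball (as an isomorphism type) unchanged but yields a non‑isomorphic graph; since $G$ then shares its $1$‑neighbourhoods with a non‑isomorphic graph, it is not $1$‑reconstructible. Because $p$ ranges over the whole window from just above $n^{-3/2}$ up to $\sqrt{\log n/(25n)}$ — subcritical through almost‑connected — I would use two modifications whose ranges of validity overlap, so that every admissible $p$ is covered by at least one. Concretely one proves two statements, one valid for $\omega(n^{-3/2})\le p\le\frac{\log n}{4n}$ and one for $\frac{\log n}{4n}\le p\le\sqrt{\log n/(25n)}$, each with a failure probability that tends to $0$ uniformly over the stated range; a possibly oscillating $p$ then lies in one range or the other at each $n$.

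For the small‑$p$ range I would use that $G$ contains at least two components isomorphic to $P_3$. The number $X$ of such components has mean $(1+o(1))\tfrac{n^3p^2}{2}e^{-3np}$; a brief case split on the size of $np$ shows this tends to infinity uniformly over $\omega(n^{-3/2})\le p\le\frac{\log n}{4n}$, and since distinct potential $P_3$‑components are essentially uncorrelated (the covariance ratio is $(1-p)^{-9}=1+o(1)$), a second‑moment argument gives $X\ge 2$ w.h.p. Given two such components $a_1b_1c_1$ and $a_2b_2c_2$ with centres $b_1,b_2$, replace them by the single path $a_1b_1c_1b_2$ together with the single edge $a_2c_2$ (delete $a_2b_2,b_2c_2$, add $b_2c_1,a_2c_2$). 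A direct check shows $P_3\sqcup P_3$ and $P_4\sqcup K_2$ have the same multiset of rooted $1$‑balls (each is $4$ degree‑one balls plus $2$ cherry‑centre balls), and no other vertex is affected because the modified edges lie entirely inside the two deleted components; but the new graph has exactly two fewer $P_3$‑components, so it is not isomorphic to $G$.

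For the larger‑$p$ range small components are gone, so I would use a degree‑preserving $2$‑switch. Call a quadruple of distinct vertices $(u,v,x,y)$ \emph{good} if $ux,vy\in E$, if $uv,uy,vx,xy\notin E$, if none of the pairs $(u,x),(u,y),(v,x),(v,y)$ has a common neighbour, and if $\deg u,\deg v,\deg x,\deg y$ are pairwise distinct. Deleting $ux,vy$ and adding $uy,vx$ then preserves every rooted $1$‑ball: for each of $u,v,x,y$ exactly one neighbour is exchanged (legitimate, as the relevant non‑edges guarantee the new neighbour was not already present), and the four common‑neighbour conditions force both the old and the new neighbour to be an isolated vertex inside the ball in question, so its rooted isomorphism type does not change, while the rest of the ball is literally the same graph; and for any other $w$ none of the four modified edges has both endpoints in $N_1(w)$ — this is exactly what the four common‑neighbour conditions rule out — and $N(w)$ is untouched, so $N_1(w)$ is unchanged. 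The switch preserves all degrees, but it replaces the pairs $\{\deg u,\deg x\},\{\deg v,\deg y\}$ by $\{\deg u,\deg y\},\{\deg v,\deg x\}$ in the multiset $\{\!\{\{\deg a,\deg b\}:ab\in E\}\!\}$, and these are four distinct pairs when the four degrees are distinct; since that multiset is an isomorphism invariant, the new graph is not isomorphic to $G$.

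It remains to show a good quadruple exists w.h.p.\ throughout $\frac{\log n}{4n}\le p\le\sqrt{\log n/(25n)}$. Up to a $1-o(1)$ factor the expected number of good quadruples is $n^4p^2e^{-4np^2}$: the $p^2$ is for the two present edges, the four absent edges cost only $(1-p)^4=1-o(1)$, the common‑neighbour condition has probability $\big(1-(2p-p^2)^2\big)^{n-4}=(1+o(1))e^{-4np^2}$ (a vertex is a bad witness precisely when it is adjacent both to one of $u,v$ and to one of $x,y$), and the four degrees are distinct with probability $1-o(1)$ since degrees have standard deviation $\Theta(\sqrt{np})\to\infty$. As $np^2\le\frac1{25}\log n$ here, this expectation is at least $n^{3-4/25-o(1)}\to\infty$, and a second‑moment estimate — bounding the contribution of pairs of good quadruples that share vertices, for which the slack in the constant $\tfrac1{25}$ leaves ample room — yields a good quadruple w.h.p. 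The main obstacle is exactly this last part: choosing the defining conditions of a good quadruple so that the switch simultaneously preserves all $1$‑balls and changes an isomorphism invariant, while keeping their total probabilistic ``cost'' at $n^{-\Theta(1)}$ rather than $n^{-\omega(1)}$ so that such a quadruple persists all the way to $p=\Theta(\sqrt{\log n/n})$, and then pushing the first‑ and second‑moment computations through uniformly over this polynomially wide window of $p$.
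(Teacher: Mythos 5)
Your proposal is correct and follows essentially the same route as the paper: for sparse $p$ a second-moment argument producing two $P_3$-components and the swap $P_3\sqcup P_3\leftrightarrow P_4\sqcup P_2$, and for denser $p$ a degree-preserving $2$-switch on two edges with four pairwise distinct endpoint degrees and no interfering common neighbours, found by a first- and second-moment computation and shown non-isomorphic via the multiset of degree pairs along edges (your only-four-pairs common-neighbour condition is a mild weakening of the paper's pairwise-disjoint-neighbourhood condition, giving $e^{-4np^2}$ in place of $e^{-6np^2}$). One small correction: the uniform lower bound on the expected number of good quadruples over $\frac{\log n}{4n}\le p\le\sqrt{\log n/(25n)}$ is of order $n^{2-o(1)}$ (attained near the lower endpoint), not $n^{3-4/25-o(1)}$, but this does not affect the second-moment argument.
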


We further show that the lower bound is sharp.

\begin{theorem}\label{cor:r=1}
    Let $p=p(n)$ and $G\in\mathcal G(n,p)$.
    If $p = o(n^{-3/2})$, then with high probability $G$ is reconstructible from its 1-neighbourhoods.
\end{theorem}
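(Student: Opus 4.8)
The plan is to show that for $p=o(n^{-3/2})$ the random graph $G$ is, with high probability, a disjoint union of isolated vertices and isolated edges, and then to observe that every graph of this shape is rigidly pinned down by its $1$-neighbourhoods. This is the $r=1$ instance of the first phase transition discussed above: when $p=o(n^{-(2r+1)/(2r)})$ every component fits inside a single $r$-ball, and for $r=1$ a component even fits inside a single edge.

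First I would bound the expected number of copies of the path $P_3$ on three vertices. Choosing the centre and then the (unordered) pair of endpoints gives at most $n\binom{n-1}{2}\le n^3/2$ potential copies, each present with probability $p^2=o(n^{-3})$, so the expected number of copies is $o(1)$ and by Markov's inequality $G$ contains no $P_3$ with high probability. Since any connected graph on at least three vertices contains a $P_3$ (take a leaf $u$ of a spanning tree together with its neighbour $v$; as the spanning tree has at least three vertices, $v$ has a further neighbour $w$, and then $uvw$ is a $P_3$), on this event every component of $G$ is a single vertex or a single edge. Thus, with high probability, $G$ is, for some $m\ge 0$, a matching of size $m$ together with $n-2m$ isolated vertices.

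It then remains to verify the deterministic claim that any graph of this form is $1$-reconstructible. For a vertex $v$ the ball $N_1(v)$ has exactly $\deg(v)+1$ vertices, so, up to rooted isomorphism, $N_1(v)$ is a single rooted vertex exactly when $\deg(v)=0$ and a single rooted edge exactly when $\deg(v)=1$. Hence if $H$ is any graph on $n$ vertices whose $1$-neighbourhoods are isomorphic to those of $G$, then every vertex of $H$ has degree $0$ or $1$ and exactly $2m$ of them have degree $1$; any such $H$ is a matching of size $m$ plus $n-2m$ isolated vertices, and all such graphs are isomorphic, so $H\cong G$. Combining this with the previous paragraph proves \Cref{cor:r=1}.

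There is no real obstacle here: the probabilistic content is a one-line first-moment estimate and the combinatorial content is elementary. The only point that needs a moment's care is that "having isomorphic $r$-neighbourhoods" is defined via a single global bijection $\phi$; but in this regime we do not even need to track $\phi$, since the multiset of $1$-balls already determines the degree sequence of $H$, and a degree sequence lying in $\{0,1\}$ determines such a graph up to isomorphism.
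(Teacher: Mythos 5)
Your proposal is correct and is essentially the paper's argument specialised to $r=1$: the paper deduces the statement from Lemma \ref{thm:firstPhaseTransition}, whose sparse case rests on the same first-moment bound (Lemma \ref{claim:maxComp}, which for $r=1$ is exactly your ``no $P_3$'' computation) showing every component is an isolated vertex or edge. The only cosmetic difference is the final deterministic step, where the paper peels off maximal $r$-balls greedily while you read off the degree sequence from the $1$-balls; both are immediate once the component structure is established.
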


We note that, for very sparse graphs, there are results for even larger radii.
Mossel and Ross \cite{mossel2017shotgun}  showed that if
$p= \lambda/n$ with $\lambda \neq 1$, then there are constants $c_\lambda, C_\lambda$ such that $G$ is with high probability $r$-reconstructible if $r\geq C_\lambda \log n$ and with high probability not $r$-reconstructible if $r \leq c_\lambda \log n$. Very recently sharp asymptotics were obtained by Ding, Jiang and Ma \cite{ding2022shotgun} (including for the case $\lambda = 1$).
\bigskip

As with most graph reconstruction problems, the shotgun assembly problem is closely related to the famous {\em reconstruction conjecture} of Kelly and Ulam \cite{kelly1942isometric,kelly1957congruence,ulam1960collection}. The conjecture asserts that every graph $G$ with at least 3 vertices can be determined up to isomorphism from its vertex-deleted
subgraphs (i.e.~from the multiset $\{G-v : v \in V (G)\}$ of unlabelled subgraphs).
There has been substantial work by many different authors over many years on this conjecture (see e.g. \cite{bondy1977graph,bondy1991graph,asciak2010survey, lauri2016topics} for surveys and background),
and on variants with less information such as using fewer subgraphs (see e.g. \cite{myrvold1988ally,myrvold1989ally,myrvold1990ally,bollobas1990almost,molina1995correction,bowler2010families}) and smaller subgraphs (see e.g. \cite{giles1976reconstructing,muller1976probabilistic,kostochka20193,spinoza2019reconstruction,groenland2021reconstructingtrees}). M\"uller 1976 \cite{muller1976probabilistic} and Bollob\'as 1990 \cite{bollobas1990almost}  showed that the conjecture holds for almost all graphs. In fact, they showed that for reconstructing a random graph one needs significantly less information, for example, only a few of the vertex-deleted subgraphs are needed. The shotgun assembly problem can thus be viewed as a variant of the reconstruction problem using just \emph{local} information.

\bigskip
The paper is organised as follows. In the next section, we give a brief discussion of our proof techniques, and state some probabilistic lemmas that we will use throughout the rest of the paper. In \Cref{sec:mains} we give skeleton proofs for \Cref{thm:r=3,thm:r>3}, breaking the full proof into a series of (technical) claims that will be proved in \Cref{sec:claims}. In \Cref{sec:2nghb} we prove \Cref{thm:r=2}, and in \Cref{sec:12non} we prove \Cref{thm:r=2-non} and \Cref{thm:r=1}.

\section{Discussion and definitions}\label{sec2}

In this section we give short descriptions of some of the main ideas in our proofs.
We will use a very simple but powerful tool for reconstructing graphs, known as the `overlap method', which was introduced in the paper of Mossel and Ross \cite{mossel2017shotgun}.
Intuitively, it seems reasonable that if the neighbourhoods of different vertices are very different from each other, then one might be able to identify vertices in the neighbourhoods of other vertices and reconstruct the graph.
In $N_r(v)$ we can see the entire $(r-1)$-neighbourhood of the neighbours of $v$, so if all the $(r-1)$-neighbourhoods are unique, then we can identify the neighbours of $v$ from its $r$-neighbourhood. This leads to the following lemma.

\begin{lemma}[{\cite[Lemma 2.4]{mossel2017shotgun}}]\label{lem:overlap}
    Suppose that a graph $G$ has unique $(r-1)$-neighbourhoods. Then it is reconstructible from its $r$-neighbourhoods.
\end{lemma}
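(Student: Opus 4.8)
The plan is to prove the statement directly: given any graph $H$ whose $r$-neighbourhoods are isomorphic to those of $G$, witnessed by a bijection $\phi\colon V(G)\to V(H)$ together with rooted isomorphisms $\psi_v\colon N_r^{(G)}(v)\to N_r^{(H)}(\phi(v))$ satisfying $\psi_v(v)=\phi(v)$, I will show that $\phi$ is itself a graph isomorphism $G\to H$.

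First I would record two easy ``visibility'' facts. (a) For any vertex $x$, the rooted graph $N_{r-1}(x)$ is determined by $N_r(x)$: every vertex within distance $r-1$ of $x$, and every shortest path between two such vertices, lies inside $N_r(x)$, so the induced rooted $(r-1)$-ball computed inside $N_r(x)$ coincides with the one computed in the whole graph. Consequently $N_{r-1}^{(H)}(\phi(v))\cong N_{r-1}^{(G)}(v)$ for every $v$, and hence $H$ also has unique $(r-1)$-neighbourhoods, matched to those of $G$ via $\phi$. (b) If $u$ is a neighbour of $v$, then $N_{r-1}(u)$ is visible \emph{inside} $N_r(v)$: every vertex within distance $r-1$ of $u$ is within distance $r$ of $v$ by the triangle inequality, and shortest paths out of $u$ of length at most $r-1$ stay within distance $r$ of $v$, so the rooted $(r-1)$-ball of $u$ computed inside $N_r(v)$ is genuinely $N_{r-1}(u)$. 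The same holds in $H$.

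The main step is to show $\phi$ preserves adjacency. Fix an edge $uv\in E(G)$. Since $\psi_v$ is an isomorphism of induced subgraphs fixing the root, $w:=\psi_v(u)$ is a neighbour of $\phi(v)$ in $H$. Because $\psi_v$ preserves distances within the $r$-balls, it carries the copy of $N_{r-1}(u)$ sitting inside $N_r^{(G)}(v)$ onto the copy of $N_{r-1}(w)$ sitting inside $N_r^{(H)}(\phi(v))$ (using fact (b) on both sides), giving a rooted isomorphism $N_{r-1}^{(G)}(u)\cong N_{r-1}^{(H)}(w)$. Combining this with fact (a), which supplies $N_{r-1}^{(H)}(w)\cong N_{r-1}^{(G)}(\phi^{-1}(w))$, and the uniqueness of $(r-1)$-neighbourhoods in $G$, forces $\phi^{-1}(w)=u$, that is, $\psi_v(u)=\phi(u)$; hence $\phi(u)\phi(v)\in E(H)$. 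Running the identical argument with the roles of $G$ and $H$ swapped — using the maps $\psi_v^{-1}$ and the fact, established in (a), that $H$ has unique $(r-1)$-neighbourhoods — gives the reverse implication, so $\phi$ is a graph isomorphism and $H\cong G$.

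I expect the only genuine work to be the bookkeeping in facts (a) and (b): one must check carefully that passing between a ball $N_r(v)$ and a sub-ball $N_{r-1}(u)$ around a neighbour $u$ of $v$ preserves both the induced edge set and the distance function in the relevant range, so that an isomorphism of $r$-balls truly restricts to a rooted isomorphism of the corresponding $(r-1)$-balls. Once this is in place, the rest is a one-line application of the uniqueness hypothesis. (The statement is vacuous for $r=1$ unless $|V(G)|\le 1$, and the argument applies uniformly for all $r\ge 1$.)
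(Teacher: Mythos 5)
Your argument is correct, and it is exactly the ``overlap method'' reasoning that the paper sketches just before the lemma (the paper itself gives no proof, citing Mossel and Ross, whose Lemma~2.4 is proved in essentially this way): inside $N_r(v)$ one sees the true $(r-1)$-balls of the neighbours of $v$, so uniqueness of $(r-1)$-balls pins down the neighbours, and your facts (a) and (b) are the correct bookkeeping needed to make that rigorous. No gaps; the only cosmetic remark is that your conclusion is slightly stronger than required, since you show the given bijection $\phi$ is itself an isomorphism rather than merely that some isomorphism exists.
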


We will use this lemma when we prove reconstructibility in the proofs of \Cref{thm:r=3}(iii) and \Cref{thm:r>3}(iii). However, proving the uniqueness of neighbourhoods is not always a simple task, especially for such a large range of $p$. Moreover, for large values of $r$, we will not have uniqueness of $(r-1)$-neighbourhoods for the entire range of $p$ we consider and we cannot apply the method as is. Instead, we will use the idea of the overlap method to handle high-degree vertices and then apply a different argument for low degree vertices.

Reconstructibility below the first phase transition, that is reconstructibility when $p=o(n^{-\frac{2r+1}{2r}})$, will follow easily from the fact that all components are with high probability small enough to be fully contained in balls of radius $r$ and for us to recognise this.

For showing non-reconstructibility, we need to prove that with high probability there is a second graph $H$ which is not isomorphic to $G$ but has isomorphic $r$-neighbourhoods.
When considering smaller values of $p$, that is, closer to the first phase transition, our reasoning for non-reconstructibility will lie in the small components. Indeed, for such values of $p$ there will be components that are paths with $2r + 1$ vertices with high probability. The non-reconstructibility will follow from the fact that the collection of $r$-neighbourhoods of two disjoint copies of $P_{2r + 1}$ (a path with $2r + 1$ vertices) is isomorphic to the collection of $r$-neighbourhoods of disjoint copies of $P_{2r}$ and $P_{2r+ 2}$, and therefore graphs containing these cannot be uniquely identified. Interestingly, for $r\geq 4$ being non-reconstructible coincides with the existence of these small components, and the second threshold for reconstructibility is around the point where we stop seeing two disjoint copies of $P_{2r+1}$ as components.
For $r\leq 3$ however, a different phenomena occurs and with high probability it is not possible to reconstruct $G$ even after the disappearance of these small paths. Roughly speaking, it turns out that (with high probability) we can find two edges $uv$ and $xy$, where the $(r-1)$-neighbourhoods of the end vertices are isomorphic, but the $r$-neighbourhoods are not. We can replace the edges by $uy$ and $xv$ to get a graph with the same collection of $r$-neighbourhoods, but which is in a different isomorphism class. This property will continue beyond the existence of two isolated copies of $P_{2r + 1}$ for $r\leq 3$, and for $r=3$ it is instead the disappearance of this property which coincides with the second phase transition.

We use the following notation to distinguish between different types of neighbourhoods.
For a vertex $v$, we let $\Gamma_r(v)$ be the set of vertices that are at distance \emph{exactly} $r$ from $v$. We write $|\Gamma_r(v)|$ for the number of such vertices. In the special case that $r=1$ we simply write $\Gamma(v)$ and we use $d(v)=|\Gamma(v)|$ to denote the degree of the vertex $v$.
As mentioned above, we let ${N}_r^{(G)}(v)$ be the graph induced by the vertices at distance at most $r$ from $v$, where the vertices are unlabelled except for the vertex $v$.
We also use $\Gamma_{\leq r}(v)$ to denote the set of vertices of the graph ${N}_r^{(G)}(v)$ (i.e. the vertices at distance at most $r$ from $v$).
In some proofs we will consider subgraphs consisting of neighbourhoods of several vertices and we will give the relevant notation as and when it is needed. 

\begin{remark}\label{rem:efficiency}
    In every case where we prove that the graph $G \in \mathcal{G}(n,p)$ is $r$-reconstructible with high probability, we give an algorithm that reconstructs $G$ provided it has certain properties and prove that a random graph satisfies these properties with high probability. With minor modifications, all of these algorithms can be run in polynomial time.
\end{remark}

\begin{remark}\label{rem:exact}
    One can also consider exact reconstructibility.  A graph $G$ is said to be \emph{exactly reconstructible} from its $r$-neighbourhoods if $G$ is the unique labelled graph with its collection of $r$-neighbourhoods, i.e. for any $H$ such that ${N}_r^{(G)}(v) \simeq {N}_r^{(H)}(v)$ for every $v \in V(G)$, we have $H = G$. \Cref{lem:overlap} holds for exact reconstructibility, but not all reconstructible graphs are exactly reconstructible. For example, any graph with two disjoint edges as components cannot be reconstructed exactly from its neighbourhoods. In particular,  this means there is some $\alpha > 0$ such that $\mathcal{G}(n,p)$ is not exactly reconstructible with high probability when $p$ is both $\omega(1/n^2)$ and at most $\alpha \log (n)/n$. This contrasts with Theorems \ref{thm:r=3}(i), \ref{thm:r>3}(i), \ref{thm:r=2-lower} and \ref{cor:r=1} which show that $\mathcal{G}(n,p)$ is reconstructible for some of this range.
    When $p \leq 1/2$ and $p = \omega(\log^4 (n)/ (n \log \log n))$, the degree neighbourhoods of vertices are unique with high probability \cite{czajka2008improved}.  When this is true, exact reconstructibility from $r$-neighbourhoods is the same as non-exact reconstructibility for all $r \geq 2$. It follows that, when $p \leq 1/2$, we have exact reconstructibility in \Cref{thm:r=2}. A minor adaption of the proof of \Cref{thm:r=3}(iii) would give exact reconstructibility.
\end{remark}

\subsection{Useful facts}\label{sec:prob}

In this section we state some well known probabilistic bounds which will be useful later in the paper.
We start by stating a simple fact about the median(s) of the binomial distribution.
\begin{fact}
    \label{lem:median}
    Let $X \sim \Bin(n,p)$. Then $\prob{X > \ceil{np}} \leq 1/2$.
\end{fact}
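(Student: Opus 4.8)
The plan is to deduce this from the classical fact that every median of $\Bin(n,p)$ differs from the mean by less than $1$. The stated inequality $\prob{X > \ceil{np}} \le 1/2$ is equivalent to $\prob{X \le \ceil{np}} \ge 1/2$, i.e.\ to the assertion that $\ceil{np}$ is at least as large as some median of $X$; and by a classical theorem (due to Neumann, and to Kaas and Buhrman) every median of $\Bin(n,p)$ lies in $\{\floor{np},\ceil{np}\}$. Since $\floor{np} \le \ceil{np}$, this settles the claim at once, and in a write-up it would be entirely legitimate simply to quote it.

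If a self-contained argument is wanted, I would use the uniform order-statistics representation of the binomial. Let $U_1,\dots,U_n$ be i.i.d.\ uniform on $[0,1]$ with order statistics $U_{(1)} \le \dots \le U_{(n)}$, and put $X = |\{\, i : U_i \le p\,\}| \sim \Bin(n,p)$. Write $k = \ceil{np}$; the claim is trivial if $k \ge n$ or $p = 0$, so assume $1 \le k \le n-1$. Almost surely no $U_i$ equals $p$, whence $\{X>k\} = \{U_{(k+1)} \le p\}$, and $U_{(k+1)} \sim \mathrm{Beta}(k+1,\,n-k)$. So it suffices to show that $p$ is at most the median of $\mathrm{Beta}(k+1,\,n-k)$.

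For this I would invoke the (standard) fact that the median of a $\mathrm{Beta}(a,b)$ distribution with $a,b \ge 1$ lies between its mode $\tfrac{a-1}{a+b-2}$ and its mean $\tfrac{a}{a+b}$. Here $a = k+1$ and $b = n-k$, so the mode is $\tfrac{k}{n-1}$ and the mean is $\tfrac{k+1}{n+1}$; and using only $np \le k$ one checks $p(n-1) = np - p \le k$ and $p(n+1) = np + p \le k+1$, i.e.\ $p \le \tfrac{k}{n-1}$ and $p \le \tfrac{k+1}{n+1}$. Hence $p \le \min\!\big(\tfrac{k}{n-1},\tfrac{k+1}{n+1}\big) \le \mathrm{median}\big(\mathrm{Beta}(k+1,n-k)\big)$, as required.

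The only ingredient here that is not wholly elementary is the mode--median--mean sandwiching for the Beta distribution; this is well known, but a from-scratch proof needs a short monotonicity calculation for the incomplete beta integral. One can avoid it entirely by working on the binomial pmf directly: pairing the terms $\prob{X=j}$ with $j > k$ against terms with $j < k$ and exploiting log-concavity of $j \mapsto \binom{n}{j}$ reduces matters to the case $np \in \mathbb{Z}$, which is then easy. The main thing to bear in mind is that there is no ``soft'' proof here: the statement concerns the centre of the distribution rather than a tail, so Markov-, Chernoff- or Pinsker-type estimates are all too weak to establish it.
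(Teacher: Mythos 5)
The paper offers no proof of this statement at all: it is recorded as a \emph{Fact}, i.e.\ precisely the classical observation that every median of $\Bin(n,p)$ lies in $\{\floor{np},\ceil{np}\}$ (Neumann; Kaas--Buhrman), so your first paragraph --- quoting that theorem and noting that $\prob{X>\ceil{np}}\le 1/2$ is equivalent to $\ceil{np}$ being at least a median --- is exactly the intended treatment and would suffice. Your self-contained alternative is also correct: the identity $\{X>k\}=\{U_{(k+1)}\le p\}$ with $k=\ceil{np}$, the law $U_{(k+1)}\sim\mathrm{Beta}(k+1,n-k)$, and the arithmetic $p\le k/(n-1)$, $p\le (k+1)/(n+1)$ are all right, so the claim reduces to $p\le\min(\text{mode},\text{mean})\le\text{median}$ of that Beta law. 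Be aware, though, that this last sandwich (median between mode and mean for $\mathrm{Beta}(a,b)$ with $a,b\ge 1$) is itself a nontrivial fact of essentially the same flavour and depth as the binomial-median statement you are trying to prove, so as a ``from scratch'' argument it mostly relocates the work rather than eliminating it; if a genuinely elementary proof were required, the direct pmf pairing/log-concavity route you sketch at the end (or the standard coupling proof of Kaas--Buhrman) would be the way to go. None of this affects correctness.
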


We will make frequent use of the following well-known bounds on the tails of the binomial distribution, known as Chernoff bounds (see e.g.
\cite{alon2016probabilistic,janson2011random,mitzenmacher2017probability}).

\begin{lemma}[Follows from Theorem 4.4 in \cite{mitzenmacher2017probability}]
    \label{lem:chernoff}
    Let $X \sim \Bin\left( n, p\right)$, $\mu=np$ and $\eps > 0$. Then
    \begin{align*}
        \bP \left( X \geq ( 1 + \eps) np \right) & \leq \exp \left(  - \frac{\eps^2 \mu} {2 + \eps} \right), \\
        \bP \left( X \leq ( 1 - \eps) np \right) & \leq \exp \left(  - \frac{\eps^2 \mu} {2} \right).
    \end{align*}
\end{lemma}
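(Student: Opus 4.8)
This is a standard Chernoff bound, and as the attribution in the statement indicates it is exactly Theorem~4.4 of \cite{mitzenmacher2017probability}; one perfectly legitimate option is simply to invoke that reference. For completeness, though, the plan is to reprove it via the exponential moment method. First I would write $X=\sum_{i=1}^n Y_i$ as a sum of i.i.d.\ Bernoulli($p$) random variables, note that $\mu:=\E\left[X\right]=np$ (this is what $\mu$ denotes in the statement), and record the moment generating function bound: for every real $t$,
\[
\E\left[e^{tX}\right] = \left(1 + p(e^t - 1)\right)^n \le \expb{\mu(e^t-1)},
\]
which uses $1+x\le e^x$ applied with $x=p(e^t-1)$ (and the fact that $1+p(e^t-1)>0$).

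For the upper tail, fix $\eps>0$ and apply Markov's inequality to $e^{tX}$ with the choice $t=\ln(1+\eps)>0$, giving
\[
\bP\big(X \ge (1+\eps)\mu\big) \le e^{-t(1+\eps)\mu}\,\E\left[e^{tX}\right] \le \expb{\mu\big(\eps - (1+\eps)\ln(1+\eps)\big)} = \left(\frac{e^{\eps}}{(1+\eps)^{1+\eps}}\right)^{\mu}.
\]
It then remains only to verify the elementary inequality $\eps-(1+\eps)\ln(1+\eps)\le-\eps^2/(2+\eps)$ for all $\eps>0$. I would deduce this from $\ln(1+\eps)\ge\frac{2\eps}{2+\eps}$, which holds because both sides vanish at $\eps=0$ and the derivative comparison $\frac{1}{1+\eps}\ge\frac{4}{(2+\eps)^2}$ reduces to $\eps^2\ge0$; multiplying by $1+\eps$ gives $(1+\eps)\ln(1+\eps)\ge\frac{2\eps(1+\eps)}{2+\eps}$, hence $\eps-(1+\eps)\ln(1+\eps)\le\frac{\eps(2+\eps)-2\eps(1+\eps)}{2+\eps}=-\frac{\eps^2}{2+\eps}$.

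For the lower tail I would first dispose of $\eps\ge1$: there $(1-\eps)\mu\le0$, so $\bP\big(X\le(1-\eps)\mu\big)\le\bP(X=0)=(1-p)^n\le e^{-\mu}\le\expb{-\eps^2\mu/2}$. For $0<\eps<1$, apply Markov's inequality to $e^{-tX}$ with $t=-\ln(1-\eps)>0$, obtaining
\[
\bP\big(X \le (1-\eps)\mu\big) \le e^{t(1-\eps)\mu}\,\E\left[e^{-tX}\right] \le \expb{-\mu\big(\eps + (1-\eps)\ln(1-\eps)\big)} = \left(\frac{e^{-\eps}}{(1-\eps)^{1-\eps}}\right)^{\mu}.
\]
The remaining inequality $-\eps-(1-\eps)\ln(1-\eps)\le-\eps^2/2$ follows from the power series expansion $(1-\eps)\ln(1-\eps)=-\eps+\frac{\eps^2}{2}+\sum_{j\ge3}\frac{\eps^j}{j(j-1)}\ge-\eps+\frac{\eps^2}{2}$, valid for $0<\eps<1$ since all the omitted terms are nonnegative. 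The only genuine content beyond bookkeeping is in these two single-variable inequalities, which are entirely routine calculus; there is no real obstacle here, and if one prefers to avoid even that, the whole statement may just be cited from \cite[Theorem~4.4]{mitzenmacher2017probability}.
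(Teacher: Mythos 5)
The paper offers no proof of this lemma at all---it is stated as following from Theorem 4.4 of \cite{mitzenmacher2017probability}---so your first option (simply citing that reference) is exactly what the paper does, and your self-contained argument is the standard exponential-moment proof of that theorem: the MGF bound, the choices $t=\ln(1+\eps)$ and $t=-\ln(1-\eps)$, and the two elementary inequalities $\ln(1+\eps)\ge\tfrac{2\eps}{2+\eps}$ and $(1-\eps)\ln(1-\eps)\ge-\eps+\tfrac{\eps^2}{2}$ are all verified correctly. The only blemish is in your treatment of $\eps\ge1$ for the lower tail: the last step $e^{-\mu}\le\expb{-\eps^2\mu/2}$ in that chain is false once $\eps>\sqrt{2}$ (and $\mu>0$); the right observation is that for $\eps>1$ and $\mu>0$ one has $(1-\eps)\mu<0\le X$, so $\bP\left(X\le(1-\eps)\mu\right)=0$ and the bound is trivial, while at $\eps=1$ your chain does work. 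This is a one-line fix and does not affect the substance of the proof.
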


We will also be interested in tail bounds for binomial distributions where $\mu \to 0$ as $n \to \infty$, for which we use the following simple bound.

\begin{lemma}
    \label{lem:small-mean}
    Let $X \sim \Bin(n,p)$ and $k \in \mathbb{N}$. Then
    \[ \prob{X  \geq k} \leq e (np)^k.\]
\end{lemma}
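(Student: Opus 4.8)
The plan is to observe that the event $\{X\ge k\}$ is precisely the event that, among the $n$ independent trials underlying $X$, there is \emph{some} collection of $k$ trials that are all successes, and then apply a union bound over the possible such collections.

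Concretely, I would index the trials by $[n]$, and for each $k$-subset $S\subseteq[n]$ let $A_S$ be the event that every trial in $S$ is a success, so that $\prob{A_S}=p^k$. Since $X\ge k$ holds if and only if $A_S$ occurs for at least one $S$ with $|S|=k$, the union bound gives
\[
\prob{X\ge k}\;\le\;\sum_{|S|=k}\prob{A_S}\;=\;\binom{n}{k}p^k .
\]
It then only remains to simplify: using $\binom{n}{k}\le n^k/k!\le n^k$ we get $\prob{X\ge k}\le (np)^k\le e\,(np)^k$, which is the claimed bound. (The factor $e$, and indeed the $1/k!$, are discarded with room to spare; they are presumably retained in the statement only for uniformity with how the lemma is applied later.)

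There is essentially no obstacle here beyond keeping track of the degenerate case $k=0$, where both sides equal $1$. If one prefers to avoid explicitly invoking the union bound, the same estimate follows from the identity $\expec{\binom{X}{k}}=\binom{n}{k}p^k$ (by linearity of expectation, since $\binom{X}{k}$ counts the $k$-subsets of successful trials) together with Markov's inequality applied to the nonnegative integer-valued variable $\binom{X}{k}$, noting $\binom{X}{k}\ge 1$ whenever $X\ge k$.
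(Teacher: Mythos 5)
Your proposal is correct, and it takes a slightly different route from the paper. The paper bounds the tail directly: $\prob{X\ge k}=\sum_{j\ge k}\binom{n}{j}p^j(1-p)^{n-j}\le\sum_{j\ge k}\frac{(np)^j}{j!}\le (np)^k\sum_{j\ge 0}\frac{1}{j!}=e\,(np)^k$, i.e.\ it estimates every term of the tail sum and pays a factor $e$ for summing the series (the term-by-term comparison implicitly uses $np\le 1$, the only regime in which the statement is non-trivial anyway). You instead observe that $\{X\ge k\}$ is contained in the union of the events that a fixed $k$-set of trials all succeed, so a union bound (equivalently, Markov's inequality applied to $\binom{X}{k}$, whose mean is $\binom{n}{k}p^k$) gives $\prob{X\ge k}\le\binom{n}{k}p^k\le\frac{(np)^k}{k!}\le(np)^k\le e\,(np)^k$. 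Your argument is sound, needs no case distinction on $np$, and in fact yields the strictly sharper bound $\binom{n}{k}p^k$, with the factors $e$ and $1/k!$ given away only at the end to match the stated form; both proofs are elementary and equally adequate for the way the lemma is used in the paper.
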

\begin{proof}
    We have \[\prob{X  \geq k} = \sum_{j = k}^n \binom{n}{j} p^j (1-p)^{n-j}
        \leq \sum_{j = k}^n \frac{n^j}{j!} p^j
        \leq (np)^k \sum_{j=0}^\infty \frac{1}{j!},\] and the result is immediate.
\end{proof}

We will also want to bound the probability that a binomial (or Poisson binomial) random variable takes a specific value, and we now give several useful lemmas bounding these probabilities. The first, due to Rogozin \cite{rogozin1961estimate}, bounds the probability of a mode of independent discrete random variables.

\begin{theorem}[Theorem 2 in \cite{rogozin1961estimate}]
    \label{thm:sup}
    Let $X_1, \dots, X_n$ be a sequence of independent discrete random variables, and let $S = X_1 + \dotsb + X_n$. Let $p_i = \sup_x\bP\left(X_i = x\right)$.
    Then \[\sup_x \bP \left( S = x \right) \leq \frac{C}{\sqrt{\sum_{i=1}^n (1-p_i)}}\] where $C$ is an absolute constant.
\end{theorem}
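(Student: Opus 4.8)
The plan is to reduce to symmetric summands and then estimate a point mass by Fourier analysis; this is the standard route to the Kolmogorov--Rogozin inequality. Write $p_i=\sup_x\bP(X_i=x)$ and $\sigma=\sum_{i=1}^n(1-p_i)$. We may assume $\sigma$ exceeds a large absolute constant, as otherwise the bound is trivial once $C$ is chosen large. Let $X_i'$ be an independent copy of $X_i$, put $Y_i=X_i-X_i'$ and $\widetilde S=\sum_i Y_i$. Each $Y_i$ is symmetric, the mode of the discrete variable $S$ is attained, say at $x_0$, and
\[
  \bP\bigl(\widetilde S=0\bigr)=\sum_x\bP(S=x)^2\ \ge\ \bP(S=x_0)^2=\Bigl(\sup_x\bP(S=x)\Bigr)^2 ,
\]
while $\bP(Y_i\neq 0)=1-\sum_x\bP(X_i=x)^2\ge 1-p_i$, so $\sum_i\bP(Y_i\neq 0)\ge\sigma$. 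Hence it suffices to prove $\bP(\widetilde S=0)\le C^2/\sigma$.

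For the Fourier step, set $\psi_i(t)=\E[\cos(tY_i)]=|\E[e^{itX_i}]|^2\in[0,1]$, so $t\mapsto\prod_i\psi_i(t)$ is the nonnegative (and even) characteristic function of $\widetilde S$. From the pointwise inequality $\mathbf 1[\widetilde S=0]\le\bigl(\tfrac{\sin(T\widetilde S/2)}{T\widetilde S/2}\bigr)^2$ and the Fej\'er identity $\bigl(\tfrac{\sin(Tx/2)}{Tx/2}\bigr)^2=\tfrac1T\int_{-T}^T\bigl(1-\tfrac{|t|}{T}\bigr)e^{itx}\,dt$, taking expectations gives, for every $T>0$,
\[
  \bP\bigl(\widetilde S=0\bigr)\ \le\ \frac1T\int_{-T}^{T}\Bigl(1-\tfrac{|t|}{T}\Bigr)\prod_i\psi_i(t)\,dt\ \le\ \frac2T\int_0^T e^{-g(t)}\,dt,\qquad g(t):=\sum_i\bigl(1-\psi_i(t)\bigr),
\]
using $1-u\le e^{-u}$. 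It remains to choose $T$ and bound this integral by $O(1/\sqrt\sigma)$.

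To fix the scale, note $1-\psi_i(t)=\E[1-\cos(tY_i)]$ and $\tfrac1T\int_0^T(1-\cos(ty))\,dt=1-\tfrac{\sin(Ty)}{Ty}\ge\tfrac12\mathbf 1[|ty|\ge 2]$, so $\tfrac1T\int_0^T g(t)\,dt\ge\tfrac12\sum_i\bP(|Y_i|\ge 2/T)$; since $\bP(|Y_i|\ge 2/T)\to\bP(Y_i\neq 0)$ as $T\to\infty$ we may fix $T$ with $\sum_i\bP(|Y_i|\ge 2/T)\ge\sigma/2$, making the average of $g$ on $[0,T]$ at least $\sigma/4$. The main obstacle is that this on-average bound does not by itself control $\int_0^T e^{-g}$: $g$ can return to $0$ at exceptional frequencies (for instance multiples of $2\pi$ when the $X_i$ are integer-valued), so $e^{-g}$ need not be small on most of $[0,T]$. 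To overcome this I would exploit the regularity of $g$, in particular the doubling bound $g(2t)\le 4g(t)$ (which follows from $\sin^2\theta\le 4\sin^2(\theta/2)$) that forbids $g$ from being simultaneously very large and very localized near the origin, via a dyadic decomposition of $[0,T]$; an alternative route, essentially Esseen's, is to expand $\prod_i\cos(tY_i)=2^{-n}\sum_{\eps\in\{\pm1\}^n}e^{it\langle\eps,Y\rangle}$ with $\langle\eps,Y\rangle=\sum_i\eps_iY_i$, integrate out $t$ first so each term becomes $O\bigl(\min(1,(T|\langle\eps,Y\rangle|)^{-2})\bigr)$, and then, conditioning on $Y$, apply the Erd\H{o}s--Littlewood--Offord anti-concentration inequality to the sign sum $\langle\eps,Y\rangle$. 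In either approach the delicate point is that the $Y_i$ may live at wildly different scales, and may be heavy-tailed, which is precisely what forces the scale-adapted choice of $T$ above; carrying the constants through the argument yields an absolute constant $C$.
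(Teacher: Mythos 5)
The paper does not prove this statement at all (it is quoted as Theorem~2 of Rogozin's 1961 paper), so your proposal is measured against the classical argument — and it breaks at the very first reduction. The symmetrization inequality $\sup_x \bP(S=x)^2 \le \sum_x \bP(S=x)^2 = \bP(\widetilde S = 0)$ is correct, but the statement you reduce to, $\bP(\widetilde S=0) \le C^2/\sigma$ with $\sigma = \sum_i (1-p_i)$, is false. Take $X_1,\dots,X_n$ i.i.d.\ uniform on $\{0,1\}$: then $p_i = 1/2$, $\sigma = n/2$, each $Y_i$ takes the values $0,\pm 1$ with probabilities $1/2,1/4,1/4$, and the local central limit theorem gives $\bP(\widetilde S = 0) \sim (\pi n)^{-1/2}$, which is far larger than $C^2/\sigma = 2C^2/n$ for large $n$. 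In general $\bP(\widetilde S=0)$ is of order $\sigma^{-1/2}$, not $\sigma^{-1}$, so the symmetrize-and-square route can never yield better than $\sup_x \bP(S=x) = O(\sigma^{-1/4})$ — a full square root short of the theorem. Your Fourier bound confirms this: in the same example $g(t) = \tfrac{n}{2}(1-\cos t)$ is $2\pi$-periodic, and the contribution of the interval $[0,2/\sqrt{n}]$ (and of its translates by $2\pi$) forces $\tfrac1T\int_0^T e^{-g(t)}\,dt \ge c\,n^{-1/2}$ for every choice of $T$; no dyadic decomposition or Littlewood--Offord input can push it to $O(1/\sigma)$. (Note also the internal inconsistency: you state that it suffices to bound the integral by $O(1/\sqrt{\sigma})$, while your own reduction requires $C^2/\sigma$.)

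The repair is to avoid squaring: run the Fej\'er-kernel (Esseen) argument on $S$ itself rather than on $\widetilde S$. For every $T>0$ one has $\sup_x \bP(S=x) \le \tfrac{C_0}{T}\int_{-T}^{T} \prod_i \left|\E e^{itX_i}\right| dt$ (bound the indicator of a length-$1/T$ interval, not of a point, by the Fej\'er kernel), and then $\left|\E e^{itX_i}\right| \le \exp\left(-\tfrac12\left(1-\left|\E e^{itX_i}\right|^2\right)\right) = \exp\left(-\tfrac12 \E[1-\cos(tY_i)]\right)$, so the quantity to control is the average of $e^{-g(t)/2}$, whose true order is $\sigma^{-1/2}$ — exactly what the theorem asserts. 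The remaining (and genuinely nontrivial) step is to show that this average is $O(\sigma^{-1/2})$ despite the lattice returns of $g$; that is the content of the Kolmogorov--Rogozin/Esseen concentration-function argument, and it is precisely the part your sketch gestures at but does not carry out. So as written the proposal both aims at a false intermediate target and leaves the core anti-concentration estimate unproved; even the weaker $\sigma^{-1/4}$ bound is not actually established.
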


The following estimate can be derived from the proofs of Theorem 1.2 and Theorem 1.5 in \cite{bollobas2001random}.

\begin{theorem}\label{thm:DML}
    Suppose $X \sim \Bin(n, p)$ where $p = p(n)$ may depend on $n$. Let $q = 1 - p$ and define $\sigma(n)$ by $\sigma = \sqrt{p q n}$. If $\sigma \to \infty$ as $n \to \infty$, then uniformly over all $0 \leq h \leq \sigma^{5/4}$ such that $pn + h \in \mathbb{Z}$, we have
    \[\prob{X = pn + h} = (1 + o_\sigma(1)) \frac{1}{\sqrt{2 \pi \sigma^2}} \expb{ - \frac{h^2}{2 \sigma^2}}.\]
\end{theorem}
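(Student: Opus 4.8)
The plan is to prove this local limit theorem by the classical route: expand the binomial probability mass function using Stirling's formula and then control the resulting expression with a second-order Taylor expansion of the logarithm. This is essentially the content underlying Theorems~1.2 and~1.5 of \cite{bollobas2001random}, but it is short enough to carry out from scratch. Throughout, write $k = pn + h$, which is an integer by hypothesis. The first step is to check that $k$ sits well inside $\{1, \dots, n-1\}$ and, more precisely, that $k = pn(1+o(1))$ and $n-k = qn(1+o(1))$ uniformly over $0 \le h \le \sigma^{5/4}$. This holds because $\sigma^2 = pqn \le \min(pn, qn)$, so $|h| \le \sigma^{5/4} \le (\min(pn,qn))^{5/8}$ while $\min(pn,qn) \ge \sigma^2 \to \infty$; hence $|h| = o(pn)$ and $|h| = o(qn)$. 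In particular $k, n-k \ge \sigma^2/2$ for large $n$, which is what will make every error term below uniform in $h$.

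The second step is Stirling's formula $m! = \sqrt{2\pi m}(m/e)^m(1+O(1/m))$ applied to $n!$, $k!$ and $(n-k)!$, which gives
\[\prob{X = k} = \binom{n}{k}p^k q^{n-k} = \frac{1}{\sqrt{2\pi}}\sqrt{\frac{n}{k(n-k)}}\left(\frac{np}{k}\right)^{k}\left(\frac{nq}{n-k}\right)^{n-k}(1+o(1)),\]
where the $(1+o(1))$ absorbs the Stirling errors $e^{O(1/k)+O(1/(n-k))} = e^{O(1/\sigma^2)}$. For the polynomial prefactor I would expand $k(n-k)/n = (pn+h)(qn-h)/n = \sigma^2 + h(q-p) - h^2/n$; since $|h(q-p)|/\sigma^2 \le |h|/\sigma^2 \le \sigma^{-3/4}$ and (using $\sigma^2 = pqn \le n/4$, so $\sigma \le \sqrt n/2$) $h^2/(n\sigma^2) \le \sigma^{1/2}/n = O(n^{-3/4})$, this equals $\sigma^2(1+o(1))$, so $\sqrt{n/(k(n-k))} = \sigma^{-1}(1+o(1))$ and the prefactor is $(1+o_\sigma(1))/\sqrt{2\pi\sigma^2}$.

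The heart of the matter is the exponential factor. Set $u = h/(np)$ and $v = h/(nq)$; by the bounds above, $0 \le u, v \le \sigma^{-3/4} = o(1)$ uniformly in $h$ (using $np, nq \ge \sigma^2$). Taking logarithms,
\[\log\left[\left(\frac{np}{k}\right)^{k}\left(\frac{nq}{n-k}\right)^{n-k}\right] = -k\log(1+u) - (n-k)\log(1-v),\]
and I would Taylor-expand each logarithm to second order, using $|\log(1+u) - u + u^2/2| \le u^3$ and $|\log(1-v) + v + v^2/2| \le v^3$ (valid once $u,v \le 1/2$). The linear terms contribute $-ku + (n-k)v = -(h + h^2/(np)) + (h - h^2/(nq)) = -h^2/(npq) = -h^2/\sigma^2$; the quadratic terms contribute $\tfrac12 ku^2 + \tfrac12(n-k)v^2 = \tfrac{h^2}{2np} + \tfrac{h^2}{2nq} + O(|h|^3/\sigma^4) = \tfrac{h^2}{2\sigma^2} + O(|h|^3/\sigma^4)$; adding these gives $-h^2/(2\sigma^2)$ up to errors. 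The cubic remainders are bounded by $k u^3 + (n-k) v^3 = O(|h|^3/\sigma^4)$, and this is the one place where the exponent $5/4$ enters: $|h|^3/\sigma^4 \le (\sigma^{5/4})^3/\sigma^4 = \sigma^{-1/4} \to 0$ uniformly over the window, and one checks similarly that all higher-order terms (the quartic term is $O(|h|^4/\sigma^6) = O(\sigma^{-1})$, etc.) vanish. Hence the exponential factor equals $\expb{-h^2/(2\sigma^2)}(1+o_\sigma(1))$; multiplying by the prefactor and the Stirling error yields the claim.

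The only real subtlety, and the step I expect to need the most care, is the bookkeeping of \emph{uniformity}: each error above must be shown to be dominated by a single quantity tending to $0$ (with $\sigma$, equivalently $n$), not merely to vanish for each fixed $h$, and the constant $5/4$ is precisely calibrated so that the cubic Taylor remainder is $O(\sigma^{-1/4})$ while the window $0 \le h \le \sigma^{5/4}$ stays wide enough to be useful. If one wished to bypass this, one could instead invoke the stated estimates directly from the proofs of Theorems~1.2 and~1.5 in \cite{bollobas2001random}.
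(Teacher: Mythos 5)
Your proof is correct, and it is exactly the classical Stirling-plus-Taylor argument underlying the source the paper cites: the paper itself gives no proof of this statement, deriving it directly from the proofs of Theorems 1.2 and 1.5 in \cite{bollobas2001random}, which proceed by the same route. Your uniformity bookkeeping (in particular $|h|\le\sigma^{5/4}$ giving a cubic remainder $O(|h|^3/\sigma^4)=O(\sigma^{-1/4})$ and prefactor errors $O(\sigma^{-3/4})$) is accurate, so your write-up is a valid self-contained substitute for the citation.
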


In the proof of \Cref{thm:r=2}, we will approximate the sum of Bernoulli random variables with a Poisson random variable for which we use the following result. The first version of this result was given by Le Cam \cite{le1960approximation} in 1960, but there are now several variations and different proofs, and we refer the reader to \cite{steele1994cam} for more discussion. We will use the following version.

\begin{theorem}[Le Cam's Theorem]
    \label{thm:le-cam}
    Let $X_1, \dots, X_n$ be independent Bernoulli random variables with success probabilities $p_1, \dots, p_n$. Let $S  = X_1 + \dotsb + X_n$ and let $\mu$ denote the expectation of $S$ (i.e.
    \(\mu = \expec{S} = \sum_{i=1}^n p_i\)).
    Then
    \[ \sum_{k=0}^\infty \left| \prob{S = k} - \frac{\mu^k e^{-\mu}}{k!} \right| < 2 \min \left\{1, \frac{1}{\mu}\right\} \sum_{i=1}^n p_i^2. \]
\end{theorem}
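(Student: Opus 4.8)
The plan is to prove the bound via the Stein--Chen method, which is the route that delivers the sharp factor $\min\{1,1/\mu\}$; the classical coupling argument (sketched at the end) only gives the constant $1$ in its place. Let $Z\sim\mathrm{Poisson}(\mu)$. The starting point is the Stein characterisation of the Poisson law: $Z$ is $\mathrm{Poisson}(\mu)$ iff $\expec{\mu f(Z+1)-Z f(Z)}=0$ for every bounded $f:\mathbb{Z}_{\ge 0}\to\mathbb{R}$. For a fixed set $A\subseteq\mathbb{Z}_{\ge 0}$ introduce the Stein equation
\[ \mu\,g(k+1)-k\,g(k)=\mathbf{1}_A(k)-\bP(Z\in A),\qquad k\ge 0, \]
which (say with the convention $g(0)=0$) has a unique bounded solution $g=g_A$, and by linearity $g_A=\sum_{j\in A}g_{\{j\}}$. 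Substituting $S$ and taking expectations gives the basic identity $\bP(S\in A)-\bP(Z\in A)=\expec{\mu\,g_A(S+1)-S\,g_A(S)}$.

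The second step is to exploit independence to rewrite the right-hand side so that it is manifestly of order $\sum_i p_i^2$. Writing $S_i=S-X_i$, the pointwise identities $X_i g(S)=X_i g(S_i+1)$ and $g(S+1)-g(S_i+1)=X_i\bigl(g(S_i+2)-g(S_i+1)\bigr)$, together with the independence of $X_i$ and $S_i$, give $\expec{S g_A(S)}=\sum_i p_i\expec{g_A(S_i+1)}$ and $\expec{\mu g_A(S+1)}=\sum_i p_i\expec{g_A(S+1)}$, hence
\[ \bP(S\in A)-\bP(Z\in A)=\sum_{i=1}^n p_i\,\expec{g_A(S+1)-g_A(S_i+1)}=\sum_{i=1}^n p_i^2\,\expec{g_A(S_i+2)-g_A(S_i+1)}. \]
Consequently $|\bP(S\in A)-\bP(Z\in A)|\le \bigl(\sup_{k\ge 1}|g_A(k+1)-g_A(k)|\bigr)\sum_i p_i^2$.

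Everything now hinges on the regularity estimate $\sup_{k\ge 1}|g_A(k+1)-g_A(k)|\le (1-e^{-\mu})/\mu\le\min\{1,1/\mu\}$, uniformly over all $A$, and this is the step I expect to be the main obstacle. It is proved by writing $g_{\{j\}}$ explicitly in terms of Poisson tail probabilities (one gets a formula of the shape $g_{\{j\}}(k+1)=\tfrac{k!\,e^\mu}{\mu^{k+1}}\bigl[\bP(Z\le k)\bP(Z=j)-\bP(Z\le k,\,Z=j)\bigr]$), reading off the sign pattern of $g_{\{j\}}$ in $k$, summing over $j\in A$ with signs so that the positive and negative parts telescope against Poisson masses, and checking that the extremum over $A$ and $k$ is exactly $(1-e^{-\mu})/\mu$; the elementary inequalities $1-e^{-\mu}\le 1$ and $1-e^{-\mu}\le\mu$ then give the stated $\min\{1,1/\mu\}$ form. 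Granting this, $|\bP(S\in A)-\bP(Z\in A)|\le\min\{1,1/\mu\}\sum_i p_i^2$ for every $A$; taking $A=\{k:\bP(S=k)\ge\bP(Z=k)\}$, for which $\bP(S\in A)-\bP(Z\in A)=\tfrac12\sum_{k\ge 0}|\bP(S=k)-\bP(Z=k)|$, yields the claimed inequality, the strictness coming from the fact that the extremal value of $|g_A(k+1)-g_A(k)|$ cannot be attained simultaneously by every summand unless the configuration is degenerate, in which case a direct computation still leaves room. As a sanity check on the weaker form, one may instead couple each $X_i$ with an independent $Y_i\sim\mathrm{Poisson}(p_i)$ via the maximal coupling, so that $\bP(X_i\ne Y_i)$ equals the total variation distance $p_i(1-e^{-p_i})\le p_i^2$; since $\sum_i Y_i\sim\mathrm{Poisson}(\mu)$ and $\bP\bigl(S\ne\sum_i Y_i\bigr)\le\sum_i\bP(X_i\ne Y_i)$, this recovers the theorem with $\min\{1,1/\mu\}$ replaced by $1$.
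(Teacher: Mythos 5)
This theorem is not proved in the paper at all: it is quoted as a known result, with the paper pointing to Le Cam's original 1960 article and to Steele's survey for proofs, so there is no in-paper argument to compare yours against. Your Stein--Chen route is the standard modern proof of exactly this form of the statement, and it is the right choice: the ``magic factor'' $\min\{1,1/\mu\}$ (in fact $(1-e^{-\mu})/\mu$) is precisely what the Chen--Stein method with the Barbour--Eagleson bound delivers, whereas, as your own sanity check at the end correctly shows, the maximal-coupling argument only gives the constant $1$. Your reduction is correct and complete: the Stein identity, the rewriting $\bP(S\in A)-\bP(Z\in A)=\sum_i p_i^2\,\expec{g_A(S_i+2)-g_A(S_i+1)}$ via $X_i g(S)=X_i g(S_i+1)$ and independence of $X_i$ from $S_i$, the passage to total variation through the extremal set $A=\{k:\bP(S=k)\ge\bP(Z=k)\}$, and the observation that strictness follows since $(1-e^{-\mu})/\mu<\min\{1,1/\mu\}$ for $\mu>0$ (the degenerate case $\sum_i p_i^2=0$ being vacuous). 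The one genuine gap is the piece you yourself flag: the uniform increment bound $\sup_{k\ge1}|g_A(k+1)-g_A(k)|\le(1-e^{-\mu})/\mu$ is asserted with only a sketch, and in a self-contained proof this (the Barbour--Eagleson lemma) is the entire technical content; your outline of it is the right one (explicit formula for $g_{\{j\}}$ in terms of Poisson tails, sign pattern in $k$, summation over $j\in A$), though note your displayed formula for $g_{\{j\}}(k+1)$ has the sign reversed relative to the Stein equation as you wrote it, which is harmless here since only the absolute increment is used but should be fixed if the lemma is written out in full.
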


\section{Reconstruction from \texorpdfstring{$r$}{r}-neighbourhoods, \texorpdfstring{$r\geq 3$}{r >= 3}}\label{sec:mains}

In this section we use a series of lemmas to prove \Cref{thm:r=3} and \Cref{thm:r>3}, but we delay proving the more complicated lemmas until the later sections. Both of these proofs employ different arguments for different ranges of $p$, although the proofs of parts (i) and (ii) are very similar in both cases.

We start by recording some simple facts about the structure of random graphs.

\begin{lemma}\label{lem:maxPath}
    Let $r=r(n) \geq 1$ and suppose that $p = p(n) = o(n^{-\frac{2r+1}{2r}})$. Then with high probability a random graph $G\in \mathcal G(n,p)$ does not contain a copy of the path on $2r + 1$ vertices.
\end{lemma}

\begin{proof}
    There are at most $n^{2r+1}$ ordered tuples of $2r+1$ vertices and the probability these form a path (in the given order) is $p^{2r}$. Hence, the probability that there is a path of length $2r+1$ in $G$ is $o(1)$ by Markov's inequality.
\end{proof}

\begin{lemma}\label{claim:2IdenticalPaths}\label{halfway}
    There exists an $\alpha > 0$ such that the following holds for all $1\le r = o(\log n)$. If $p$ is such that ${p}{n^{\frac{2r+1}{2r}}}=\omega(1)$ and $p \le \alpha \frac{\log n}{rn}$, then $G\in \mathcal G(n,p)$ contains two paths of $2r+1$ vertices as components with high probability.
\end{lemma}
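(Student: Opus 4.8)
The plan is to use the second moment method to show that the number of components isomorphic to $P_{2r+1}$ (the path on $2r+1$ vertices) is concentrated around its mean, which we will show tends to infinity under the stated hypotheses; this gives at least two such components with high probability. First I would fix the relevant counting parameters: the number of ways to embed a labelled path on $2r+1$ vertices into $K_n$ is $\binom{n}{2r+1}\cdot\frac{(2r+1)!}{2} = (1+o(1))\frac{n^{2r+1}}{2}$ (the division by $2$ accounts for the two orientations of the path), and for a fixed such vertex set $S$ to induce exactly a path component we need the $2r$ path-edges present (probability $p^{2r}$), the remaining $\binom{2r+1}{2}-2r$ non-edges inside $S$ absent, and all edges from $S$ to $V(G)\setminus S$ absent (probability $(1-p)^{(2r+1)(n-2r-1)}$). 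Writing $X$ for the number of $P_{2r+1}$-components, we get
\[
\expec{X} = (1+o(1))\frac{n^{2r+1}}{2}\, p^{2r}(1-p)^{\binom{2r+1}{2}-2r + (2r+1)(n-2r-1)}.
\]
The exponent of $(1-p)$ is $(1+o(1))(2r+1)n$, and since $p = O(\log n /(rn))$ we have $rnp = O(\log n)$, so $(1-p)^{(2r+1)n} = \expb{-(1+o(1))(2r+1)np}\ge n^{-(1+o(1))(2r+1)\alpha/r}$. Choosing $\alpha$ small enough that $(2r+1)\alpha/r \le 3\alpha < 1/2$, say, this factor is at least $n^{-1/2+\Omega(1)}$. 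Meanwhile $n^{2r+1}p^{2r} = (np)^{2r}\cdot n \ge (\text{something}) \cdot n$ using $pn^{(2r+1)/(2r)} = \omega(1)$, which rearranges to $(np)^{2r} = \omega(n)$; hence $n^{2r+1}p^{2r} = \omega(n^2)$. Combining, $\expec{X} = \omega(n^2 \cdot n^{-1/2+\Omega(1)}) = \omega(1)$ — in fact it is polynomial in $n$ — after checking that the lower-order factors $p^{\binom{2r+1}{2}-2r}$ and the $r$-dependence are harmless for $r = o(\log n)$.

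Next I would compute the second moment. Write $X = \sum_S \mathbf 1_{A_S}$ where $A_S$ is the event that $S$ induces a $P_{2r+1}$-component. Then $\expec{X^2} = \sum_{S,T}\prob{A_S \cap A_T}$. If $S$ and $T$ are not vertex-disjoint and $S\neq T$, then $A_S\cap A_T = \emptyset$ (a vertex cannot lie in two distinct components), so the only contributions are from $S=T$, giving $\expec{X}$, and from disjoint pairs $S,T$. For disjoint $S,T$, the event $A_S\cap A_T$ requires the path-edges inside each, the non-edges inside each, no edges from $S$ to its complement and no edges from $T$ to its complement; the edges between $S$ and $T$ are required absent by both, so they are not double-counted but the "outside" exponents overlap only on the $S$–$T$ edges. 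A short computation gives $\prob{A_S\cap A_T} = \prob{A_S}\prob{A_T}(1-p)^{-(2r+1)^2}$, and since $(1-p)^{-(2r+1)^2} = 1 + O(r^2 p) = 1+o(1)$ for $r = o(\log n)$ and $p = O(\log n/(rn))$, we obtain
\[
\expec{X^2} \le \expec{X} + (1+o(1))\expec{X}^2.
\]
Therefore $\Var(X) \le \expec{X} + o(1)\expec{X}^2$, and by Chebyshev's inequality $\prob{X \le 1} \le \prob{|X - \expec{X}| \ge \expec{X}-1} = O(1/\expec{X}) + o(1) = o(1)$ since $\expec{X}\to\infty$. Hence with high probability $X \ge 2$, i.e.\ $G$ contains two path components on $2r+1$ vertices.

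The main obstacle I anticipate is bookkeeping the $r$-dependence uniformly over the range $1 \le r = o(\log n)$: one must verify that all the "$1+o(1)$" and "harmless lower-order" claims above hold uniformly in $r$, in particular that $p^{\binom{2r+1}{2}-2r}$ (a factor that is $p$ to a power growing like $2r^2$) does not kill the lower bound on $\expec{X}$, and that the correction $(1-p)^{-(2r+1)^2}$ in the second moment really is $1+o(1)$. For the first point, $p^{\binom{2r+1}{2}-2r} = p^{\Theta(r^2)}$, which with $p \le \alpha\log n/(rn)$ could be as small as $n^{-\Theta(r^2)}$ — so the crude bound $\expec{X} = \omega(n^2)\cdot n^{-1/2+\Omega(1)}$ is not enough once $r$ grows; I would instead track the exponent of $n$ more carefully, using $(np)^{2r} = \omega(n)$ to write $\expec{X} \approx n\cdot (np)^{2r}\cdot p^{\Theta(r^2)}\cdot n^{-(1+o(1))(2r+1)np}$ and argue that the dominant constraint is still $np = \omega(n^{1/(2r)})$ together with $np = O(\log n/r)$, so the product stays $\omega(1)$ — this is the one place where a genuine (if routine) estimate is needed rather than a one-line bound, and it is also where the precise choice of $\alpha$ gets pinned down. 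Everything else (the disjointness argument killing cross terms, Chebyshev) is standard.
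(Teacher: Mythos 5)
Your overall architecture is the same as the paper's (second moment method: disjointness kills the cross terms up to a factor $(1-p)^{-(2r+1)^2}=1+o(1)$, then Chebyshev), and that part is fine. The gap is in the first moment, which is the only nontrivial step. Your key deduction ``$pn^{\frac{2r+1}{2r}}=\omega(1)$ rearranges to $(np)^{2r}=\omega(n)$, hence $n^{2r+1}p^{2r}=\omega(n^2)$'' is false: the hypothesis gives $np=\omega(n^{-1/(2r)})$, so raising to the $2r$-th power yields only $n^{2r+1}p^{2r}=\omega(1)$. For instance, with $r$ fixed and $p=n^{-\frac{2r+1}{2r}}\log\log n$ one has $n^{2r+1}p^{2r}=(\log\log n)^{2r}$, nowhere near $\omega(n^2)$. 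Consequently your bound $\expec{X}=\omega(n^2)\cdot n^{-1/2+\Omega(1)}$ does not follow, and the claim $\expec{X}\to\infty$ --- the heart of the lemma --- is left unproven over the full range of $p$ and $r$. Your closing paragraph also misdiagnoses where the difficulty lies: the internal non-edges contribute a factor $(1-p)^{\binom{2r+1}{2}-2r}=1-o(1)$ (not $p^{\Theta(r^2)}$, which does not appear), and the ``dominant constraint'' you quote, $np=\omega(n^{1/(2r)})$, has the same sign error. The genuine issue is the trade-off between the two factors uniformly in $p$ and $r$: near the lower end of the range $n^{2r+1}p^{2r}$ may be barely $\omega(1)$ (and, when $r$ grows, the factor $(1-p)^{(2r+1)n}$ need not be $1-o(1)$ there either), while near the upper end $(1-p)^{(2r+1)n}$ is polynomially small and must be beaten by $n(np)^{2r}\ge n$.

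The paper resolves exactly this point with a short convexity argument you are missing: writing $f(r,n,p)=\expec{X}$, the function $\log f$ is concave in $p$ (it is a constant plus $2r\log p$ plus a positive multiple of $\log(1-p)$), so its minimum over the interval $[\lambda n^{-\frac{2r+1}{2r}},\,\alpha\tfrac{\log n}{rn}]$ is attained at an endpoint; one then checks the two endpoints separately. At the upper endpoint one gets $f=\Omega(\exp((1-6\alpha)\log n+2r\log(\alpha\log n/r)))$, which fixes $\alpha<1/6$ (your ``$3\alpha<1/2$'' is in the right spirit but only addresses this endpoint), and at the lower endpoint one writes $r\le\beta\log n$ and $p\ge\lambda n^{-\frac{2r+1}{2r}}$ with $\beta\to0$ and $\lambda\to\infty$ chosen to grow slowly relative to each other, obtaining $f=\Omega(\exp(2r(\log\lambda-2\lambda e^{-1/\beta})))=\omega(1)$. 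Some argument of this kind (endpoint reduction or an equivalent case analysis on $np$) is needed to make your first-moment bound correct uniformly for $1\le r=o(\log n)$; once $\expec{X}\to\infty$ is established, the rest of your proof goes through as written.
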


\begin{proof}
    Fix $\alpha<1/6$, and let $X$ be the number of path components with $2r+1$ vertices.  The expectation of $X$ is
    \[f(r,n, p):=\frac12\binom{n}{2r+1}(2r+1)!p^{2r}(1-p)^{(2r+1)(n-2r-1)+\binom{2r+1}{2}-2r}.\]
    We may assume that $r \leq \beta \log n$ and $p\ge \lambda n^{-\tfrac{2r+1}{2r}}$ where $\beta = \beta(n)$ and $\lambda=\lambda(n)$ are functions that slowly tend to 0 and infinity respectively. For fixed $n$ and $r$, the function $f$ (as a function of $p$) has the form $f(p)=Cp^a(1-p)^b$ for some positive constants $C,a,b$. When $p\in [0,1]$ this function is 0 at the endpoints of the interval, and positive otherwise. It is also easy to check that the function obtains a single maximum  in $[0,1]$. Thus the minimum of $f(r, n , p)$ over $p \in [\lambda n^{-\frac{2r+1}{2r}},\alpha \frac {\log n}{rn}]$ is attained at one of the end points. 

    We have that 
    \(f(r, n, p) \geq \frac12 (n - 2r)^{2r+1} p^{2r} (1-p)^{(2r + 1)n}\), and so substituting in $p_0 = \lambda n^{-\frac{2r+1}{2r}}$ and using that $1-x \geq e^{-2x}$ for small $x$, we find that 
    \begin{align*}
        f(r, n, p_0) &\geq \frac{1}{2} \left(\frac{n - 2r}{n}\right)^{2r+1} \exp\left( 2r \log\lambda - 2(2r+1) \lambda n^{-1/2r}\right)\\
        &\geq \frac{1}{2} \left(\frac{n - 2r}{n}\right)^{2r+1} \exp\left( 2r \left(\log\lambda - 3 \lambda \exp(-1/(2\beta))\right)\right).
    \end{align*}
    This is $\omega(1)$ provided that $\lambda$ 
    grows sufficiently slowly compared to $1/\beta$.    
    Similarly, substituting in $p_1 = \alpha \frac{\log n}{rn}$ we find that
    \begin{align*}
        f(r, n, p_1) &\geq \frac{1}{2} \left(\frac{n - 2r}{n}\right)^{2r+1} n \exp\left(2r \log(\alpha \log(n) / r ) - 2 \frac{2r+1}{r} \alpha \log n \right)\\
        &\geq  \frac{1}{2} \left(\frac{n - 2r}{n}\right)^{2r+1} \exp\left((1-6\alpha) \log n + 2 r \log (\alpha \log  (n)/r) \right),
    \end{align*}
    which is $\omega(1)$ provided $\alpha < 1/6$.
    Hence, $\expec{X} \to \infty$ as $n \to \infty$.
    
    We now bound $\E [X^2]$.    Let $\gamma$ be the probability that a specific set of $2r+1$ vertices induces a path component.
    Note that distinct components cannot share vertices, so $\E[X^2]$ decomposes as $\E[X]$ plus a sum over disjoint pairs of $(2r+1)$-sets.  The probability that two specific disjoint sets of $2r+1$ vertices both induce path components is $\gamma^2(1-p)^{-(2r+1)^2}$, as there are $(2r+1)^2$ potential edges between the sets.  Since $(1-p)^{-(2r+1)^2} = (1 + o(1))$, we find that $\E[X^2]=(1+o(1))\E[X]^2$.
    By Chebyshev's inequality, we obtain that with high probability $X\ge 2$.
\end{proof}

Combining the two lemmas above gives the following lemma, which handles the first phase transition.

\begin{lemma}\label{thm:firstPhaseTransition}
    Let $G\in \mathcal G(n,p)$.
    There is a constant $\alpha > 0$ such that, for all $1 \leq r = o(\log n)$,
    \begin{equation*}
        \lim_{n \to \infty}\prob{G\text{ is $r$-reconstructible}} =
        \begin{cases}
            1, & \text{if $p=o\left(n^{-\frac{2r+1}{2r}}\right)$,}                                            \\
            0, & \text{if $p=\omega\left(n^{-\frac{2r+1}{2r}}\right)$ and $p\leq \alpha \frac {\log n}{rn}$.}
        \end{cases}
    \end{equation*}
\end{lemma}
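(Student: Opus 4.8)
The plan is to read off the two cases from \Cref{claim:maxComp} and \Cref{halfway}; throughout, $P_k$ denotes the path on $k$ vertices.

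\emph{The sparse regime $p=o(n^{-(2r+1)/(2r)})$.} By \Cref{claim:maxComp}, with high probability every component of $G$ has at most $2r$ vertices, so it suffices to show that \emph{any} graph all of whose components have at most $2r$ vertices is $r$-reconstructible. The one structural fact needed is that a connected graph on at most $2r+1$ vertices has radius at most $r$: passing to a spanning tree, the midpoint of a longest path in a tree on $k$ vertices is within $\ceil{(k-1)/2}$ of every other vertex, and distances only shrink when edges are added. In particular each component $C$ on at most $2r$ vertices equals $N^{(C)}_r(v)$ for any $v\in C$ of eccentricity at most $r$ in $C$, so it appears among the $r$-neighbourhoods of $G$; and, crucially, if $H$ has the same collection of $r$-neighbourhoods as $G$, then $H$ too has all components of size at most $2r$ — otherwise, picking a connected induced subgraph on exactly $2r+1$ vertices inside an oversized component of $H$ and applying the radius bound produces an $r$-neighbourhood of $H$ on more than $2r$ vertices, whereas every $r$-neighbourhood of $G$ (hence of $H$) has at most $2r$ vertices.

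\emph{The peeling step.} It remains to check that the procedure sketched in the introduction is a well-defined function of the neighbourhood collection, so that it acts identically on $G$ and on any such $H$. Let $m$ be the largest number of vertices of an $r$-neighbourhood in the collection. Since all components have at most $2r\le 2r+1$ vertices and hence radius at most $r$, $m$ equals the largest component size, and any $r$-neighbourhood on exactly $m$ vertices is itself a largest component (it is contained in the component of its root, which then has exactly $m$ vertices). Thus the $m$-vertex neighbourhoods determine the multiset $\mathcal C$ of largest components with multiplicities: a component $C$ of size $m$ contributes, over all its copies, $(\text{multiplicity of }C)$ copies of the multiset $\{N^{(C)}_r(v): v\in C,\ \mathrm{ecc}_C(v)\le r\}$, whose size is computable from $C$ alone. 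Deleting from $G$ all components in $\mathcal C$ (and symmetrically from $H$) removes exactly the neighbourhoods $\{N^{(C)}_r(v):v\in C\}$, each with the right multiplicity, and leaves the $r$-neighbourhood collection of a smaller graph whose components again have at most $2r$ vertices. Induction on the number of vertices shows the two resulting graphs are isomorphic, hence $G\cong H$, proving $r$-reconstructibility.

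\emph{The denser regime $p=\omega(n^{-(2r+1)/(2r)})$ with $p\le\alpha\log n/(rn)$.} Take $\alpha$ as in \Cref{halfway}, so with high probability $G$ has two components equal to $P_{2r+1}$. Let $H$ be obtained from $G$ by replacing two such components by one copy of $P_{2r}$ and one copy of $P_{2r+2}$; then $H\not\cong G$, since they contain different numbers of $P_{2r+1}$-components. It remains to verify that $P_{2r+1}\sqcup P_{2r+1}$ and $P_{2r}\sqcup P_{2r+2}$ have the same multiset of marked $r$-balls, which is a finite computation: writing the $r$-ball of the $i$th vertex of $P_k$ as a marked subpath, both multisets equal $\{(P_{r+1},1),(P_{r+2},2),\dots,(P_{2r},r)\}$, together with $\{(P_{r+1},r+1),(P_{r+2},r+1),\dots,(P_{2r},r+1)\}$ and the single ball $(P_{2r+1},r+1)$, each of these taken with multiplicity two. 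Hence $G$ and $H$ have the same $r$-neighbourhoods, so $G$ is not $r$-reconstructible, with high probability.

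The only delicate point is the bookkeeping in the sparse regime: one must be sure the peeling is genuinely determined by the neighbourhood collection, so that it cannot distinguish $G$ from a competing non-isomorphic $H$, and this is exactly why both the radius-$\le r$ fact and the observation that an oversized component forces an oversized ball are required.
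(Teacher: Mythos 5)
Your proof is correct and follows essentially the same route as the paper: the sparse regime is handled by \Cref{claim:maxComp} together with the iterative ``peel off the largest component'' argument, and the dense regime by \Cref{halfway} together with swapping two $P_{2r+1}$ components for $P_{2r}\sqcup P_{2r+2}$. The only difference is that you spell out details the paper asserts without proof (the radius-$\le r$ fact forcing an oversized ball, the well-definedness of the peeling, and the explicit matching of the rooted-ball multisets), all of which check out.
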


\begin{proof}
    The dense regime follows immediately from \Cref{claim:2IdenticalPaths} and the fact that the graph consisting of two paths of $2r+1$ vertices is not reconstructible (see \Cref{sec2}).

    For the sparse regime, we note first that if a graph has no path of length $2r + 1$, then each component must be contained in the $r$-ball around one of its vertices. Indeed, if this is not the case, then the radius of the component must be at least $r + 1$ and the component contains an (induced) path with $2r + 1$ vertices \cite{erdos1986maximum}. If the graph does contain a path with at least $2r + 1$ vertices, then there must be an $r$-ball containing a path with at least $2r + 1$ vertices.

    Suppose there is no $r$-ball containing a path with at least $2r + 1$ vertices. Then we start by choosing an $r$-ball with as many vertices as possible: this gives us an entire component $C$, and from this we can determine the $r$-balls of all vertices in $C$.  We now delete all these $r$-balls from our collection, and repeat on the remaining $r$-balls (which are exactly the $r$-balls of $G$ with $C$ deleted). This will reconstruct the graph $G$, and the claim follows since \Cref{lem:maxPath} implies that no $r$-ball has a path on $2r + 1$ vertices with high probability.
\end{proof}

We remark that the algorithm in the proof above runs in polynomial time when $r = o(\log n)$. 
First, we need to check that there are no paths of length $2r$. This can be done in time $2^{O(r)} n \log n$ \cite{alon1995color}, and this is polynomial in $n$ if $r = O(\log n)$. The other key step is determining the $r$-balls of all vertices in $C$ and deleting all these $r$-balls from our collection, for which we may need to solve the graph isomorphism problem (a polynomial number of times). Fortunately, this can be done in quasipolynomial time \cite{babai2016graph} in the number of vertices and we only need to compare graphs with $o(\log n)$ vertices, so the total time is polynomial in $n$.

The following lemma will be useful when proving \Cref{thm:r>3}(iii).

\begin{lemma}\label{claim:CompSizeHighDegVxs}
    There exists $\beta > 0$ such that the following holds for all $4 \leq r \leq \log n$, and $p\ge \beta \tfrac{\log n}{rn}$. Let $G\in \mathcal G(n,p)$, and let $H$ be the subgraph of $G$ induced by the vertices with degree at most $np/2$. Then with high probability the maximum component size of $H$ is at most $r-3$.
\end{lemma}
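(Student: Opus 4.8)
The plan is to show that with high probability there is no connected subgraph of $G$ on $r-2$ vertices all of whose vertices have degree at most $np/2$ in $G$. Since any component of $H$ of size at least $r-2$ would contain such a subgraph (take a spanning tree and truncate), this suffices. So fix $k = r-2$ and let $Y$ be the number of pairs $(T, A)$ where $T$ is a labelled tree on $k$ vertices, $A$ is a $k$-subset of $V(G)$, $T$ is a subgraph of $G[A]$, and every vertex of $A$ has degree at most $np/2$ in $G$. The first moment of $Y$ is

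\[
\expec{Y} \le n^k \cdot k^{k-2} \cdot p^{k-1} \cdot \prob{\Bin(n-k, p) \le np/2 \text{ for all } k \text{ vertices}}^{\ast},
\]

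where the last factor needs care because the events ``vertex $v$ has low degree'' for $v \in A$ are not independent (they share edges inside $A$ and are coupled through common neighbours). The clean way to handle this is to bound the probability that all $k$ vertices of $A$ have few neighbours \emph{outside} $A$: the number of edges from $A$ to $V(G)\setminus A$ is $\Bin(k(n-k), p)$, and if every vertex of $A$ had total degree at most $np/2$ then this count is at most $knp/2$; since $k(n-k)p \ge (1-o(1)) knp$ and $knp/2$ is a factor $1/2$ below the mean, \Cref{lem:chernoff} gives a bound of $\exp(-\Omega(knp))$ on this event. Combining,

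\[
\expec{Y} \le n^k k^{k-2} p^{k-1} \exp\!\left(-\tfrac{1}{9} k n p\right) \le \frac{1}{p}\left(n p \, k \, e^{-np/9}\right)^{k}.
\]

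Now I would plug in $np \ge \beta \log n / r$ and $k = r-2 < r$. The bracketed quantity is at most $n p \, r \, e^{-np/9}$; using $np \ge \beta (\log n)/r$, for $\beta$ a large enough constant the exponential factor $e^{-np/9}$ beats the polynomial factors $npr \le n \cdot n \cdot n$, so the bracket is at most $n^{-1}$, say, and $\expec{Y} \le p^{-1} n^{-k} \le n^{2} n^{-(r-2)} = o(1)$ since $r \ge 4$ gives $r - 2 \ge 2$; in fact for $r$ bounded we get a fixed negative power of $n$, and for $r \to \infty$ the bound is even stronger because $e^{-np/9} = n^{-\Omega(\beta/r)}$ still dominates once $\beta$ is large relative to the implicit constants. (One should double-check the regime $r$ close to $o(\log n)$ where $np$ could be as small as $\beta\log n/r$: there $k$ is large but each factor in the bracket is still $\le n^{-c}$ for a constant $c$ depending on $\beta$, so the product is $n^{-ck} = o(1)$.) Then Markov's inequality gives $\prob{Y \ge 1} = o(1)$, which is exactly what we want.

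The main obstacle is the dependence between the low-degree events across the $k$ vertices of the candidate set, and more subtly the fact that $A$ itself contributes up to $\binom{k}{2}$ internal edges to these degrees; the fix above — counting only edges leaving $A$, which form a genuine independent family of $k(n-k)$ coin flips, and noting that $knp/2$ is comfortably below their mean — sidesteps this entirely, at the cost of a slightly worse constant in the exponent, which is harmless. A secondary point to be careful about is the uniformity over $4 \le r = o(\log n)$: one must verify the single choice of $\beta$ makes the bound $o(1)$ simultaneously for all such $r$, which follows because the exponent $\tfrac19 knp \ge \tfrac{\beta}{9} k (\log n)/r = \tfrac{\beta}{9}\cdot\tfrac{r-2}{r}\log n \ge \tfrac{\beta}{18}\log n$ for $r \ge 4$, dominating the $k\log(npr) = O(r\log n) = o(\log^2 n)$ term from the polynomial factors once we also recall $k \le r = o(\log n)$ — so in fact $\expec{Y} \le n^{-\beta/18 + o(1)} = o(1)$.
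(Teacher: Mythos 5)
Your overall strategy is the same as the paper's: a first-moment bound over connected $(r-2)$-vertex sets all of whose vertices have small degree, exploiting that the tree-containment event (edges inside $A$) is independent of the low-out-degree event (edges from $A$ to the outside), the latter controlled by Chernoff. The genuine problem is in your final verification that the bound is $o(1)$ uniformly over $4 \le r = o(\log n)$ and $p \ge \beta\log n/(rn)$. By counting ordered tuples ($n^k$) while also paying the Cayley factor $k^{k-2}$ for the tree, you throw away the $1/k!$ coming from $\binom{n}{k}$, and this loss is fatal when $r$ grows. Concretely, take $r = \lceil \log n/\log\log\log n\rceil$ and $p = \beta\log n/(rn)$, so that $np \asymp \beta\log\log\log n$ and $k \asymp \log n/\log\log\log n$. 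Then $npk \asymp \beta\log n$ while $e^{-np/9} = (\log\log n)^{-\beta(1+o(1))/9}$, so your bracket $npk\,e^{-np/9}$ tends to infinity rather than being at most $n^{-1}$; and $(npk)^k = \exp\bigl(\Theta(\log n\log\log n/\log\log\log n)\bigr)$ swamps $e^{-knp/9} = n^{-\Theta(\beta)}$ for every constant $\beta$. Both of your uniformity justifications fail in this regime: $e^{-np/9}$ is not $n^{-c}$ for any constant $c>0$ (it is only quasi-polylogarithmically small there), and the claim that $\tfrac{\beta}{18}\log n$ dominates a term that is merely $o(\log^2 n)$ is a non sequitur, since $k\log(npk)$ can be of order $\log n\log\log n/\log\log\log n \gg \log n$.

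The fix is to keep the $1/k!$: counting unordered sets, the number of (set, spanning tree) pairs is $\binom{n}{k}k^{k-2} \le (en)^k/k^2$, so the per-vertex factor becomes $e\,np\,e^{-np/9}$ with no factor of $k$. Since $np \ge \beta\log n/r \to \infty$ (because $r = o(\log n)$), for large $n$ this is at most $e^{-np/18} \le n^{-\beta/(18r)}$, whence
\[
\expec{Y} \le \frac{1}{p}\,\expb{-\tfrac{(r-2)np}{18}} \le n\cdot n^{-\beta(r-2)/(18r)} \le n^{1-\beta/36} = o(1)
\]
once $\beta$ is a sufficiently large absolute constant. With that correction your argument is essentially the paper's proof; the only remaining (harmless) difference is that the paper bounds the low out-degree event vertex-by-vertex using independence of the $k$ edge sets leaving $A$, whereas you use a single Chernoff bound on the total number of edges leaving $A$ — both give $\expb{-\Omega(knp)}$.
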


\begin{proof}
    Fix $\beta > 5$ such that $\log\beta - \beta/9 + 1 \leq -\beta/10$, e.g. $\beta = 677$. It is enough to bound the probability of the event $E$ that there is a set $A$ of $r-2$ vertices such that $G[A]$ is connected and each vertex in $A$ has at most $np/2$ neighbours outside $A$. For fixed $A$, these two properties are independent, and we bound the probability of each property as follows. If $G[A]$ is connected, then it must contain a spanning tree. Any particular spanning tree is present with probability $p^{r-3}$ and there are $(r-2)^{r-4}$ possible spanning trees, so the probability that $G[A]$ is connected is at most $p^{r-3} (r-2)^{r-4}$. 
    Let $X\sim \Bin (n-r+2,p)$. Then the probability that $v\in A$ has at most $np/2$ neighbours outside $A$ equals $\prob{X\leq np/2}$, which by a Chernoff bound (\Cref{lem:chernoff})
    is at most $e^{-np/9}$ for large enough $n$.
    
    There are $\binom{n}{r-2} \leq ( \frac{e n}{r-2})^{r-2}$ possible choices for the set $A$, so we can upper bound the probability that $E$ occurs by 
    \begin{multline*}
                p^{r-3}(r-2)^{r-4} \cdot e^{-(r-2)np/9} \cdot \left( \frac{e n}{r-2}\right)^{r-2} = \\
                \exp\left( (r-2) \left(\log(np) - \frac{1}{9} np + 1\right) - \log p - 2 \log(r-2)\right).
    \end{multline*}
        
    Now we use that $r \geq 4$ and the way we have chosen $\beta$ to get the upper bound
    \[
        \bP(E) \leq \exp\left(- \frac{\beta}{20} \log n  + \log n\right),
    \]
    which clearly tends to $0$ as $n \to \infty$. 
\end{proof}

We will also need several facts about small balls in random graphs.  The proofs of these are more complicated so we postpone them to  \Cref{sec:claims}.

\begin{lemma}\label{clauniq2}
    For any $\eps > 0$, there exists $\beta > 0$ such that, for  $\beta\tfrac{\log^2 n}{n\left(\log\log n\right)^3} \le p \le n^{-2/3 - \eps}$, the $2$-neighbourhoods of $G\in \mathcal G(n,p)$ are unique with high probability.
\end{lemma}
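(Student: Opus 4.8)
The key structural observation is that $N_2(v)$ already contains the entire $1$-ball of every neighbour of $v$: if $w\in\Gamma(v)$ then all neighbours of $w$ lie within distance $2$ of $v$. Hence any isomorphism $N_2(u)\to N_2(v)$ sending $u$ to $v$ maps $\Gamma(u)$ onto $\Gamma(v)$ and preserves the degree of each neighbour, so $N_2(u)\simeq N_2(v)$ forces $d(u)=d(v)$ and forces $u$ and $v$ to have the same \emph{neighbour-degree multiset} $\operatorname{nd}(v):=\{\,d(w):w\in\Gamma(v)\,\}$ (taken with multiplicities). It therefore suffices to show that with high probability the map $v\mapsto\bigl(d(v),\operatorname{nd}(v)\bigr)$ is injective on $V(G)$, and we prove this by a union bound over the at most $n^2$ ordered pairs $u\neq v$.

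\textbf{Set-up and conditional independence.} A Chernoff bound (\Cref{lem:chernoff}) with a union bound over vertices gives, with high probability, that every degree lies in $[\tfrac12 np,\,2np]$; and a first-moment bound over vertices --- which uses $p\le n^{-2/3-\eps}$, so that the expected number of vertices with $t$ edges inside their neighbourhood is at most $n(d^2p)^t=n(n^2p^3)^t=o(1)$ for some $t=t(\eps)$ --- gives that with high probability every $\Gamma(v)$ spans at most $C_\eps$ edges and every pair of vertices has at most $C_\eps$ common neighbours. Fix a pair $u\neq v$ and expose, in order, the edges at $u$ and $v$ and then all edges with both endpoints in $\Gamma(u)\cup\Gamma(v)$ (there are $O_\eps(1)$ of the latter); we may assume $d(u)=d(v)=:d\asymp np$. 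The degrees of the common neighbours appear identically in $\operatorname{nd}(u)$ and $\operatorname{nd}(v)$, so $\operatorname{nd}(u)=\operatorname{nd}(v)$ is equivalent to $A_u=A_v$, where $A_v$ is the empirical histogram of the family $\{c_w+B_w: w\in\Gamma(v)\setminus\Gamma(u)\}$ with the $B_w\sim\Bin(N,p)$, $N=(1-o(1))n$, \emph{independent}, and each $c_w\ge1$ a known integer equal to $1$ for all but $O_\eps(1)$ of the $w$. Because all edges with both endpoints in $\Gamma(u)\cup\Gamma(v)$ have been exposed, the sets of still-unexposed edges governing $A_u$ and $A_v$ are disjoint, so $A_u$ and $A_v$ are conditionally independent. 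Therefore
\[
\mathbb{P}\bigl(\operatorname{nd}(u)=\operatorname{nd}(v)\ \big|\ \text{exposed}\bigr)\ \le\ \sup_{h}\ \mathbb{P}\bigl(A_v=h\bigr),
\]
uniformly over the conditioning, and it remains to bound this supremum by $o(n^{-2})$.

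\textbf{The crux: anti-concentration of the neighbour-degree histogram.} Here $A_v=(M_k)_k$ is the histogram of $m\asymp np$ independent variables, each (after discarding the $O_\eps(1)$ atypical shifts) distributed as $1+\Bin(N,p)$. The point is that $\Bin(N,p)$ spreads its mass: by the local limit theorem \Cref{thm:DML}, $\mathbb{P}(\Bin(N,p)=k)\gtrsim 1/(np)$ for every integer $k$ with $|k-Np|\lesssim\sqrt{np\log(np)}$, so for each of the $s\asymp\sqrt{np\log(np)}$ such bins the expected count $m\,\mathbb{P}(\Bin(N,p)=k)$ is $\Omega(1)$, and hence $A_v$ is spread over roughly $\exp\!\bigl(\Theta(\sqrt{np}\,(\log np)^{3/2})\bigr)$ likely values. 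Since $p\ge\beta\tfrac{\log^2 n}{n(\log\log n)^3}$ yields $\sqrt{np}\,(\log np)^{3/2}\ge(2\sqrt2-o(1))\sqrt\beta\,\log n$, and the supremum will be bounded by $\exp\bigl(-c\sqrt{np}\,(\log np)^{3/2}\bigr)$ for an absolute constant $c>0$, taking $\beta$ large enough (depending only on $c$) makes it at most $n^{-3}=o(n^{-2})$; this is precisely why the threshold must carry the factor $(\log\log n)^{-3}$ and why a constant $\beta$ suffices. To prove the required bound on $\sup_h\mathbb{P}(A_v=h)$, order the $s$ bins as $k_1,\dots,k_s$ and expand
\[
\mathbb{P}(A_v=h)\ \le\ \prod_{j=1}^{s}\mathbb{P}\bigl(M_{k_j}=h_{k_j}\ \big|\ M_{k_1},\dots,M_{k_{j-1}}\bigr);
\]
each factor is the probability that a binomial $\Bin\!\bigl(m-\textstyle\sum_{i<j}h_{k_i},\,\theta_j\bigr)$, with $\theta_j$ bounded away from $0$ and $1$, takes a prescribed value, hence is at most $\min\bigl\{1,\,C'/\sqrt{(m-\sum_{i<j}h_{k_i})\,\theta_j(1-\theta_j)}\bigr\}$ by \Cref{thm:DML} (or \Cref{thm:sup}). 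Summing logarithms then gives the bound: histograms $h$ whose residual counts $m-\sum_{i<j}h_{k_i}$ stay of the expected order $\Theta\bigl(m\,\mathbb{P}(\Bin(N,p)\ge k_j)\bigr)$ collect a small contribution from each of the $s$ factors, while histograms that spend the mass early are penalised by the then-exponentially-small early factors.

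\textbf{Conclusion and the main difficulty.} Summing the bound $o(n^{-2})$ over ordered pairs, and adding the $o(1)$ probability that the structural facts fail, shows that $v\mapsto(d(v),\operatorname{nd}(v))$ is injective with high probability, so the $2$-neighbourhoods of $G$ are unique. The only substantial point is the anti-concentration estimate of the third paragraph: the reduction, the exposure argument, and the structural first-moment facts are routine, but pushing $\sup_h\mathbb{P}(A_v=h)$ below $o(n^{-2})$ with $\beta$ only a constant forces one to exploit the full $\asymp\sqrt{np\log(np)}$-dimensional spread of the histogram --- any single scalar statistic of $\operatorname{nd}(v)$ would only give anti-concentration of order $(np)^{-O(1)}$, far from enough to beat the $n^2$ pairs.
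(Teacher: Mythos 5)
Your reduction to injectivity of $v\mapsto\bigl(d(v),\mathrm{nd}(v)\bigr)$, the exposure scheme, the conditional-independence claim $\prob{\mathrm{nd}(u)=\mathrm{nd}(v)\mid\text{exposed}}\le\sup_h\prob{A_v=h}$, and the quantitative target $\expb{-c\sqrt{np}\log^{3/2}(np)}$ (with its explanation of the $(\log\log n)^{3}$ in the threshold) are all sound. The gap is in the crux step: the uniform bound $\sup_h\prob{A_v=h}\le\expb{-c\sqrt{np}\log^{3/2}(np)}$ is not established by your sequential product. Each factor you write down is controlled only by a Rogozin/local-limit \emph{mode} bound $\min\bigl\{1,\,C'/\sqrt{R_j\theta_j}\bigr\}$ with $R_j=m-\sum_{i<j}h_{k_i}$, and this bound does not become ``exponentially small'' when $h$ dumps mass early: for, say, $h_{k_1}=m$ the first factor is still only $\Theta((np)^{-1/4})$ under the estimates you invoke, and every later factor degenerates to $1$ because $R_j$ collapses, so your displayed product bound is $\Theta((np)^{-1/4})$, nowhere near $o(n^{-2})$. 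Penalising such histograms requires a large-deviation estimate for $\prob{\Bin(R_j,\theta_j)=h_{k_j}}$ with $h_{k_j}$ far above its mean, i.e.\ a bound you never state, together with an argument that the trade-off between starving later bins and overpaying in earlier ones always wins; this is genuinely delicate here because your window of width $\asymp\sqrt{np\log(np)}$ carries almost all of the $m\asymp np$ mass, so residual counts shrink to $O(1)$ well inside the window even for the typical $h$, and at the bottom of the range ($np\asymp\log^2 n/(\log\log n)^3$) one cannot simply discard the outer bins without losing the needed $\sqrt\beta\log n$ in the exponent. (Also, $\theta_j$ is not ``bounded away from $0$''; it is $O((np)^{-1/2})$, though the formula you use does not actually need that hypothesis.) The statement you want is true — it is essentially a multinomial mode estimate — and it is repairable, e.g.\ by first conditioning on the target $h=A_u$ having tail counts at least half their means down to bins of tail mass $\asymp\log n/m$ (a Chernoff event of probability $1-o(n^{-2})$) before running the sequential bound on $A_v$, but this is exactly the missing bookkeeping, not a routine ``summing of logarithms''.

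For comparison, the paper sidesteps the adversarial-histogram issue entirely: it reveals $A=(\Gamma(x)\cup\Gamma(y))\setminus\{x,y\}$, the edges inside $A$, and all edges not meeting $x$ or $y$, so that the only remaining randomness is an independent fair-coin assignment of each vertex of $A$ to $x$ or $y$; equality of the neighbour-degree multisets then forces an exactly balanced split within each ``good'' degree class $A_k$, each such split having probability at most $|A_k|^{-1/2}\le(np)^{-1/8}$, and the product over the $\ge\tfrac13\sqrt{np\log(np)}$ good classes gives the bound with no supremum over target histograms to control. Either adopt that conditioning trick or supply the missing uniform multinomial estimate.
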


\begin{lemma}\label{clauniq3}
    Suppose $\tfrac{\log^{2/3} n}{n} \le p\le \frac{\log^2 n}{n}$. Then, with high probability, there are no two vertices $x,y$ of $G\in \mathcal G(n,p)$ with degree at least $np/2$ such that the $3$-neighbourhoods around $x$ and $y$ are isomorphic (i.e. the $3$-neighbourhoods around vertices with degree at least $np/2$ are unique).
\end{lemma}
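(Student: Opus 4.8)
The plan is to prove that, with high probability, no two vertices of degree at least $np/2$ have isomorphic $3$-neighbourhoods, by a union bound over pairs $x,y$. First I would fix a ``good event'' $\mathcal E$, holding with high probability, on which: every vertex $v$ with $d(v)\ge np/2$ has $|\Gamma_i(v)|=\Theta((np)^i)$ for $i\le3$ and has $\Omega(np)$ neighbours $u$ each with $\Omega(np)$ further neighbours outside $\Gamma_{\le1}(v)$; no ball $\Gamma_{\le2}(v)$ contains more than $(np)^{1/4}$ edges in excess of a spanning forest; and the maximum degree is $O(np)$, so that every ball of radius $5$ has $n^{o(1)}$ vertices. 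These all follow from \Cref{lem:chernoff}, \Cref{lem:small-mean} and first-moment estimates, using $\log^{2/3}n/n\le p\le\log^2 n/n$ (so $np\to\infty$). It then suffices to show, for each fixed pair, that $\prob{N_3(x)\cong N_3(y),\ \mathcal E,\ d(x),d(y)\ge np/2}$ is $o(n^{-2})$ when $\mathrm{dist}(x,y)\ge6$, and merely $n^{-1-\Omega(1)}$ when $\mathrm{dist}(x,y)\le5$ --- there being at most $n^{1+o(1)}$ pairs of the latter kind.

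The key is an isomorphism invariant of $N_3(x)$ carrying more than $\log n$ worth of entropy. For $w$ at distance $2$ from $x$ write $d^+(w)$ for the number of neighbours of $w$ at distance exactly $3$ from $x$; then the \emph{multiset, over neighbours $u$ of $x$, of the multiset $M_u:=\{d^+(w): w\sim u,\ \mathrm{dist}(w,x)=2\}$} is an isomorphism invariant of $N_3(x)$. I would expose the edges in stages: conditioning on $\mathcal F_x$, the $\sigma$-algebra generated by all edges incident to $\Gamma_{\le1}(x)$, fixes $\Gamma_1(x)$, $\Gamma_2(x)$ and the bipartite structure between them while leaving the edges out of $\Gamma_2(x)$ fresh. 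On $\mathcal E$, for all but $O((np)^{1/4})$ of the $w\in\Gamma_2(x)$ the variable $d^+(w)$ is then an independent $\Bin((1-o(1))n,p)$; grouping by the parent $u$, one obtains $\Omega(np)$ vertices $u$ whose multisets $M_u$ are \emph{independent}, each the empirical distribution of $\Theta(np)$ i.i.d.\ $\Bin((1-o(1))n,p)$ samples, i.e.\ distributed as a multinomial $\mathrm{Mult}(|M_u|;\nu)$ with $\nu(j)=\prob{\Bin((1-o(1))n,p)=j}$.

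Fixing the $y$-side so that $N_3(y)$ equals a specific graph $H$: on $\mathcal E$, $H$ has at most $2np$ distinct such ``subtree multisets'', so $N_3(x)\cong H$ forces each of our $\Omega(np)$ vertices $u$ to have $M_u$ equal to one of at most $2np$ prescribed multisets, and by independence
\[
\cprob{N_3(x)\cong H}{\mathcal F_x}\ \le\ \prod_{u}\,2np\cdot\max_{M}\,\prob{M_u=M\mid\mathcal F_x}.
\]
Each inner maximum is the mode of $\mathrm{Mult}(|M_u|;\nu)$ with $|M_u|=\Theta(np)$, and since the local limit theorem (\Cref{thm:DML}) gives $\nu(j)=\Omega((np)^{-1/2})$ for $\Omega((np)^{1/2})$ consecutive integers $j$, a telescoping estimate --- bounding successive conditional one-coordinate probabilities, each $O((np)^{-1/4})$ by \Cref{thm:sup}, over $\Omega((np)^{1/2})$ coordinates --- yields $\max_M\prob{M_u=M\mid\mathcal F_x}\le\exp(-\Omega((np)^{1/2}\log(np)))$. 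Multiplying over the $\Omega(np)$ vertices $u$, then averaging over $\mathcal F_x$ and over $H$, gives
\[
\prob{N_3(x)\cong N_3(y),\ \mathcal E,\ d(x),d(y)\ge np/2}\ \le\ \exp\bigl(-\Omega\bigl((np)^{3/2}\log(np)\bigr)\bigr),
\]
which is $n^{-\omega(1)}$ since $p\ge\log^{2/3}n/n$ forces $(np)^{3/2}\log(np)=\Omega(\log n\cdot\log\log n)$. This is exactly where the hypothesis $p\ge\log^{2/3}n/n$ enters: for smaller $p$ the depth-$3$ tree carries only $n^{o(1)}$ isomorphism types' worth of entropy, and the mode probability is no longer $o(n^{-2})$.

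The hard part will be making the ``independent, identically multinomial'' picture rigorous. For $\mathrm{dist}(x,y)\ge6$ the two balls and all the exposed randomness are genuinely independent, so one can fix $N_3(y)$ freely and the above goes through cleanly. For close pairs $N_3(x)$ and $N_3(y)$ overlap: one must condition on $\mathcal F_x\vee\mathcal F_y$ rather than on the full $y$-side, and verify that only $o(np)$ of the subtree contributions of $x$ become entangled with $y$ (which rests on crude counts showing the short cycles and paths joining the two neighbourhoods are sparse), so that $\Omega(np)$ independent contributions still survive; one must likewise absorb the $O((np)^{1/4})$ possible ``extra'' edges inside $\Gamma_{\le2}(x)$, which perturb only $O((np)^{1/4})$ of the $M_u$. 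The remaining technical point is the telescoping bound on the multinomial mode --- selecting the $\Omega((np)^{1/2})$ coordinates on which $\nu$ is uniformly $\Omega((np)^{-1/2})$, and controlling the drift (depletion of trials and renormalisation of success probabilities) of the conditional distributions as coordinates are revealed, including the atypical configurations $M$ in which most mass lands in a few coordinates.
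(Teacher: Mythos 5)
Your proposal is correct and follows essentially the same route as the paper: the invariant is the multiset, over neighbours $u$ of the root, of the (outward) degree multisets of their children, these are conditionally i.i.d.\ binomial samples, a fixed such multiset is hit with probability $\exp\left(-\Omega\left(\sqrt{np}\,\log(np)\right)\right)$ by revealing $\Theta(\sqrt{np})$ counts near the mean each with point probability $O\left((np)^{-1/4}\right)$ via \Cref{thm:DML} and \Cref{thm:sup}, and multiplying over $\Omega(np)$ neighbours beats the matching entropy (your $(2np)^{\Omega(np)}$ union over target multisets playing the role of the paper's $d!$ union over permutations), exactly as in the paper's Claims \ref{claim:3-good} and \ref{claim:equal-multisets}. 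The only differences are organisational: you use outward degrees and a near/far pair dichotomy where the paper instead shows there are few ``bad'' vertices with probability $1-o(n^{-2})$, which is the same bookkeeping you defer as ``the hard part''.
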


\begin{lemma}\label{claswapr=3}
    Let $\alpha > 0$ be a sufficiently small constant and suppose $\tfrac{\log^{2/3} n}{n} \le p\le \alpha \tfrac{\log^2 n}{n(\log\log n)^3}$.
    Then, for $G\in\mathcal G(n,p)$, with high probability there are distinct vertices $x, y, u, v$ such that $xy,uv\in E(G)$ and $xv,yu\notin E(G)$ and the graph $G'$ obtained from $G$ by deleting  $xy,uv$ and adding $xv,yu$ satisfies the following:
    \begin{enumerate}
        \item $G$ and $G'$ are not isomorphic.
        \item $G$ and $G'$ have the same collection of $3$-balls.
    \end{enumerate}
\end{lemma}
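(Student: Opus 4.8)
The plan is to realize non-reconstructibility by a single edge swap (``$2$-switch''). We look for two edges $xy$ and $uv$ of $G$ whose four endpoints are pairwise far apart and whose radius-$2$ neighbourhoods match appropriately, and set $G' = G - \{xy,uv\} + \{xv,yu\}$ as in the statement. Writing $G_0 = G - \{xy,uv\}$, we will ask that: (i) all pairwise distances in $G_0$ among $x,y,u,v$ exceed $2r+3=9$; (ii) both $x$ and $y$ have degree at least $np/2$ in $G$; and (iii) there are rooted isomorphisms $N_2^{G_0}(x) \simeq N_2^{G_0}(u)$ and $N_2^{G_0}(y) \simeq N_2^{G_0}(v)$. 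Note (iii) forces $u$ and $v$ also to have degree $\ge np/2$, so (ii)--(iii) together amount to finding two far-apart edges, both between high-degree vertices, whose ordered endpoint $2$-ball types (in $G$ with the incident swapped edge deleted) coincide.

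The swap preserves the collection of $3$-balls. Given (i), a geodesic of length $\le 3$ that stays at distance $\ge 3$ from all of $x,y,u,v$ uses none of the modified edges, so $N_3^G(w) = N_3^{G_0}(w) = N_3^{G'}(w)$ whenever $w$ is at $G_0$-distance $\ge 3$ from all four special vertices. For the remaining $w$ -- which, by (i), lie close to exactly one of them, say at $G_0$-distance $d \le 2$ from $x$ -- one checks directly that $N_3^G(w)$ is $N_3^{G_0}(w)$ with the ball $N_{2-d}^{G_0}(y)$ attached to $x$ by a single edge, while $N_3^{G'}(w)$ is the same graph with $N_{2-d}^{G_0}(v)$ attached instead; these are isomorphic by (restricting) the isomorphism in (iii). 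Hence $N_3^G(w) \simeq N_3^{G'}(w)$ for \emph{every} vertex $w$, which in particular gives part~2. (This is comparatively routine bookkeeping once the configuration is in hand.)

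The graphs $G$ and $G'$ are not isomorphic. By \Cref{clauniq3} -- whose hypotheses $\tfrac{\log^{2/3}n}{n}\le p\le\tfrac{\log^2 n}{n}$ are satisfied throughout our range, provided $\alpha$ is small -- the $3$-balls of degree-$\ge np/2$ vertices of $G$ are pairwise non-isomorphic; since a $3$-ball determines the degree of its root, a vertex of degree $\ge np/2$ then has a $3$-ball shared by no vertex at all. As the per-vertex $3$-balls of $G$ and $G'$ agree (previous paragraph) and degrees are unchanged by a $2$-switch, the same holds in $G'$. Consequently any isomorphism $\psi\colon G\to G'$ must fix every vertex of degree $\ge np/2$, so by (ii) $\psi(x)=x$ and $\psi(y)=y$; but $xy\in E(G)$ while $xy\notin E(G')$, so no such $\psi$ exists.

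It remains -- and this is where essentially all the work lies -- to show that a configuration satisfying (i)--(iii) exists with high probability for $\tfrac{\log^{2/3}n}{n}\le p\le\alpha\tfrac{\log^2 n}{n(\log\log n)^3}$ with $\alpha$ smaller than the constant of \Cref{clauniq2}. This is the ``below-threshold'' counterpart of \Cref{clauniq2}: just below $\beta\tfrac{\log^2 n}{n(\log\log n)^3}$, not only do repeated $2$-ball types exist, but (after a first-moment computation in which the exponent $3$ of $\log\log n$ is precisely the value making the relevant expected count diverge) there are enough repetitions, sufficiently spread out, to place one onto a high-degree edge together with a second matching edge far away; a second-moment estimate bounding the correlations between candidate configurations localized in different parts of $G$ then upgrades this to ``exists with high probability''. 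I expect this step -- isolating the right notion of ``$2$-ball type likely to be repeated at density $\log^2 n/(n\,\mathrm{polyloglog})$'', and controlling the second moment with the additional locality and degree constraints -- to be the main obstacle.
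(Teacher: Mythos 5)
Your deterministic scaffolding is sound and essentially matches the paper's: given two far-apart edges $xy,uv$ with matching endpoint $2$-ball types, the switch preserves every rooted $3$-ball, and non-isomorphism follows from \Cref{clauniq3} much as you argue (the paper phrases it via the edge between the unique $3$-ball types of $u$ and $v$, but your ``any isomorphism must fix $x$ and $y$'' version is an equivalent use of the same uniqueness statement, since degrees and per-vertex $3$-balls are preserved by the switch). The genuine gap is that the heart of the lemma --- showing that a configuration satisfying your (i)--(iii) exists with high probability, which is the only place the hypothesis $p\le\alpha\tfrac{\log^2 n}{n(\log\log n)^3}$ enters --- is not proved. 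You defer it to an unexecuted ``first moment of matching pairs plus second moment over localized configurations'' plan: there is no estimate of the probability that two endpoint $2$-ball types coincide, no verification that the expected number of far-apart matching pairs diverges precisely in this range of $p$, and no treatment of the correlations a second-moment argument would have to control. Since you yourself flag this as where essentially all the work lies, what you have is a correct reduction of the lemma to its main step, not a proof of it.

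For comparison, the paper sidesteps both the matching-probability estimate and the second moment by a counting/pigeonhole argument. Call an edge $uv$ \emph{good} if $G[\Gamma_{\le 5}(u)\cup\Gamma_{\le 5}(v)]$ is a tree and every vertex of $\Gamma_{\le 2}(u)\cup\Gamma_{\le 2}(v)$ has degree within $10\sqrt{np\log(np)}$ of $(n-1)p$. For a good edge the structure $H_{uv}$ is a tree determined by the degree multisets of the two neighbourhoods, with all degrees confined to a window of width $O(\sqrt{np\log (np)})$, so the number of isomorphism classes is at most $400\,np\log(np)\exp\bigl(42\sqrt{np}\,\log^{3/2}(np)\bigr)=n^{O(\sqrt{\alpha})}$ (Claim \ref{holes}); this is exactly where the $(\log\log n)^3$ in the threshold comes from. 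On the other hand, with high probability there are at least $n^2p/8\ge n\log^{2/3}n/8$ good edges (Claim \ref{claimz}). Pigeonhole then yields $n^{1-O(\sqrt{\alpha})}\gg\log^{14}n$ good edges of a single type, which exceeds the number of edges having an endpoint within distance $6$ of a fixed edge, so a second matching edge far from the first exists deterministically once these whp structural events hold. To complete your write-up you would either adopt this counting step or genuinely carry out the anticoncentration and second-moment analysis you sketch; as it stands, that step is missing.
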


We now piece together the lemmas above to give proofs of \Cref{thm:r=3} and \Cref{thm:r>3}.

\begin{proof}[Proof of  \Cref{thm:r=3}(i) and  \Cref{thm:r>3}(i)]
    Follows immediately from \Cref{thm:firstPhaseTransition}.
\end{proof}

\begin{proof}[Proof of \Cref{thm:r=3}(ii) and \Cref{thm:r>3}(ii)]
    \Cref{thm:r>3}(ii) follows immediately from\\ \Cref{thm:firstPhaseTransition}, but the lemma does not give the entire range of $p$ needed in \Cref{thm:r=3}(ii),  and we will use a different argument for larger $p$. To cover the remaining region, it is enough to show that there exists $\alpha>0$ such that $G\in \mathcal G(n,p)$ is not reconstructible from its 3-neighbourhoods with high probability when $\frac{\log^{2/3} n}{n}\leq p\leq \alpha \frac{\log^2n}{n(\log\log n)^3}$, and this is exactly the content of \Cref{claswapr=3}.
\end{proof}

\begin{proof}[Proof of \Cref{thm:r=3}(iii)]
    \Cref{thm:r=2} shows there is a constant $\delta > 0$ such that the graph can be reconstructed from its 2-neighbourhoods with high-probability when $p \geq n^{-2/3 - \delta}$. Hence, we can assume that $\beta \frac{\log^2n}{n (\log \log n)^3} \leq p \leq n^{-2/3 - \delta/2}$, and it follows from \Cref{clauniq2} that the $2$-neighbourhoods are unique with high probability. The result now follows immediately by applying \Cref{lem:overlap}.
\end{proof}

\begin{proof}[Proof of \Cref{thm:r>3}(iii)]
    By \Cref{thm:r=3}(iii), $G\in\mathcal G(n,p)$ is reconstructible with high probability from its $3$-neighbourhoods when $p = \Omega(\log^2(n)/ ( n \log \log n ))$, so we may assume that $p = O(\log^2 (n)/n)$. We use the overlap method to reconstruct the portion of the graph induced by vertices of moderately large degree; a further argument is needed to reconstruct the rest of the graph.

    Let $V_1$ be the vertices of $G$ with degree at least $np/2$ and let $V_2=V(G)\setminus V_1$. For $i=1,2$, let $H_i$ be the subgraph induced by $V_i$.  For each vertex $v$, we can determine from its 1-ball whether $v\in V_1$ or $v\in V_2$. When the 3-balls (in $G$) around the vertices in $V_1$ are unique, we can easily reconstruct $H_1$ using the overlap method, and this event happens with high probability by \Cref{clauniq3}.

    Now consider $H_2$.
    By \Cref{claim:CompSizeHighDegVxs} we may assume that all components of $H_2$ have at most $r-3$ vertices, and note that we can easily check that this holds from the $r$-balls. Consider a component $C$ of $H_2$.  For each vertex $v$ of $C$, the $(r-4)$-ball around $v$ contains all vertices of $C$, so the $(r-3)$-ball contains all vertices of $V_1$ that are adjacent to a vertex of $C$. The $r$-ball around $v$ contains the 3-balls around the vertices in $V_1$ that are adjacent to a vertex
    of $C$, and we assume that these are all unique.  It follows that by looking at the $r$-ball around $v$, we can identify $C$ (up to isomorphism), and for each vertex of $C$, we can determine which vertices of $V_1$ it is adjacent to.  We obtain this information $|C|$ times for each component $C$ of $H_2$ (once for each vertex of $C$), and so allowing for multiplicities we can reconstruct all components of $H_2$ and the way they are attached to $H_1$.
\end{proof}

The two proofs above both give algorithms to (attempt to) reconstruct a graph from its $r$-neighbourhoods, although they do not necessarily run in polynomial time. Both of these algorithms use the overlap method which requires checking if the $(r-1)$-neighbourhood of a vertex in one neighbourhood is the same up to isomorphism as the $(r-1)$-neighbourhood of the marked vertex in a different neighbourhood, and these neighbourhoods could have polynomially many vertices.
However, we can weaken the overlap method slightly and instead require that the $(r-1)$-neighbourhoods are more obviously distinct. For example, in the proof of  \Cref{thm:r=3}(iii) we could require that the multiset of degrees of the neighbours of each vertex is unique. This is in fact how we prove \Cref{clauniq2}, and so the result still holds, but these multisets can be compared in polynomial time. 

For a vertex $v$, let $D(v)$ be the multiset of degrees of the neighbours of $v$. For the proof of \Cref{thm:r>3}(iii), we label the vertex $u$ by the multiset $\{D(v) : v \in \Gamma(u)\}$. It is easy to compare the labels of the vertices in polynomial time, and the proof of \Cref{clauniq3} shows that no two vertices with degree at least $np/2$ have the same label. The proof also requires that we check the isomorphism class of the components of $H_2$, but we assume these all have $o(\log n)$ vertices. 

We note that both the proof of \Cref{thm:r=3}(iii) and the proof of \Cref{thm:r>3} make use of \Cref{thm:r=2}, but the proof of this theorem also implicitly gives a polynomial algorithm.

\begin{remark}
    Simultaneous work of Gaudio, R\'{a}cz and Sridhar \cite{gaudio2022average} also proved a result on the uniqueness of 3-balls over a different range of $p$. They proved the stronger result that the 3-balls around \emph{all} of the vertices are non-isomorphic, not just those around the vertices of degree at least $np/2$. However, they require $(1+\delta) \log(n)/n \leq p \leq 1/2$, and their result is not sufficient for our use here. In fact, such a result cannot hold for the smaller values of $p$ that we require as there will be many isolated vertices with isomorphic 3-balls.
\end{remark}

\section{Reconstruction from 2-neighbourhoods}\label{sec:2nghb}

In this section we prove \Cref{thm:r=2}. Since Gaudio and Mossel \cite{gaudio2020shotgun} proved that, for all $\eps > 0$, a random graph $G\in \mathcal G(n,p)$ can be reconstructed from its collection of $2$-balls if $n^{-1/2+\eps}\leq p$ with high probability, we may assume that $p \leq n^{-16/35}$.

We use an approach similar to that of Gaudio and Mossel \cite{gaudio2020shotgun}. We will colour each edge $uv$ by a colour which can be determined from the 2-neighbourhoods of both $u$ and $v$ and we attempt to reconstruct the graph from the edge-coloured stars around the vertices. Gaudio and Mossel \cite{gaudio2020shotgun} showed that this information is sufficient to reconstruct an edge-coloured graph when no two edges have the same colour. In order to prove our result, we will use colourings which satisfy a slightly weaker condition which is easier to show.

\begin{lemma}\label{lem:edge-coloured}
    Let $G$ be an edge-coloured graph such that every pair of edges of the same colour share a vertex. Then by looking only at the number of edges of each colour adjacent to each vertex, $G$ can be reconstructed exactly.
\end{lemma}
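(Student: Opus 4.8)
The plan is to reconstruct $G$ one edge-colour class at a time, and within each class to reconstruct a graph in which all edges share a common vertex — i.e.\ a triangle or a star (possibly with multiplicities, but in a simple graph each colour class is a set of edges). The key observation is that a set of edges in which every two edges meet is, by a classical fact (the sunflower-free / Helly-type observation for edges), either (a) a set of edges all through one common vertex — a star $K_{1,k}$ — or (b) the three edges of a triangle. These are the only two possibilities, and they are easy to tell apart from the degree data.

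First I would fix a colour $c$ and let $E_c$ be the set of edges of colour $c$; by hypothesis $E_c$ is intersecting. If $|E_c|\le 1$ the class is trivial. If $|E_c|=2$ the two edges share exactly one vertex and form a path $P_3$; the colour-$c$ degree sequence has exactly two vertices of $c$-degree $1$ and one vertex of $c$-degree $2$, and the two leaves are the endpoints — this reconstructs the class up to the labelling we already have. If $|E_c|\ge 3$, then either all edges pass through a single vertex $w$ (and then $w$ is the unique vertex of $c$-degree $\ge 2$, in fact of $c$-degree $|E_c|$, and the star is $\{wx : x \text{ has } c\text{-degree }1\}$), or the edges do not have a common vertex, in which case the intersecting property forces them to be precisely the three edges of a triangle on some three vertices $\{x,y,z\}$, each of $c$-degree $2$, and there is a unique such triangle. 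In all cases the colour-$c$ degree data at each vertex determines $E_c$ exactly. Taking the union over all colours $c$ reconstructs $E(G)$, hence $G$, exactly.

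The only genuine content is the structural dichotomy for intersecting families of edges: if $F$ is a set of edges of a graph, pairwise sharing a vertex, then either $\bigcap_{e\in F} e \ne \emptyset$ or $F$ is the edge set of a triangle. I would prove this by picking two edges $e_1=ab$, $e_2=ac$ (distinct, so meeting in one vertex $a$); if some $e_3\in F$ avoids $a$ then $e_3$ meets $e_1$ in $b$ and $e_2$ in $c$, so $e_3=bc$, and now any further edge $e_4$ must meet all of $ab$, $ac$, $bc$ and be distinct from them, which is impossible in a simple graph — so $F=\{ab,ac,bc\}$; otherwise every edge of $F$ contains $a$ and we are in the star case. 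This is the step to get right, but it is short; the rest is bookkeeping.

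The main (mild) obstacle is purely expository: being careful that "the number of edges of each colour adjacent to each vertex" really does pin down which vertex plays the role of the star centre or the triangle, and that we are reconstructing a labelled graph on the known vertex set, so there is no ambiguity between isomorphic-but-distinct classes. Once the dichotomy is in hand, each colour class is determined by its $c$-degree sequence: a star is detected by a unique vertex of maximal $c$-degree and its neighbours of $c$-degree $1$, while a triangle is detected by exactly three vertices all of $c$-degree $2$. Since the colour classes partition $E(G)$, summing over colours recovers $G$.
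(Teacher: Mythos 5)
Your proposal is correct and follows essentially the same route as the paper: decompose by colour, use the fact that an intersecting family of edges is a star or a triangle, and recover each class from the colour-degrees at the vertices (the paper phrases the final step as the single uniform rule of joining a max-colour-degree vertex to each vertex of positive colour-degree, whereas you do a short case analysis). The only difference is that you also supply a proof of the star/triangle dichotomy, which the paper simply asserts; your argument for it is correct.
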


\begin{proof}
    Let our edge-coloured stars be $S_1,\dots,S_n$, and label the corresponding centres $v_1,\dots,v_n$.
    Fix a colour $c$ and consider the subgraph $H$ consisting of all edges with this colour. From the degree sequence of $H$ we can check if $H$ (up to isolated vertices) is a triangle or a star, and note that these are the only graphs with no disjoint edges so $H$ must be one of these graphs. In either case, we can reconstruct $H$ by joining $v_i$ and $v_j$ with an edge in colour $c$ whenever one of $v_i$ and $v_j$ is a vertex of largest degree in colour $c$ (and they are both incident to at least one edge coloured with $c$).  The graph $G$ is the union (over all colours) of these subgraphs.
\end{proof}

We now give the edge colouring we will use and show that with high probability no two disjoint edges have the same colour. For an edge $uv$, let $C_{uv}$ be the subgraph of $G$ induced by the vertices at distance at most 2 from both $u$ and $v$, where we distinguish the edge $uv$. We write $C_{uv} \simeq C_{xy}$ if there is a bijection $f:  V(C_{uv}) \to V(C_{xy})$ such that $ab \in E(C_{uv})$ if and only if $f(a)f(b) \in E(C_{xy})$, and $\{f(u),f(v)\} = \{x,y\}$. We will refer to each such isomorphism class as a \emph{colour}.
\Cref{thm:r=2} follows immediately from \Cref{lem:edge-coloured} and the following.

\begin{lemma}
    \label{lem:2-neighbourhoods}
    There exists a constant $\delta > 0$ such that the following holds. Suppose  $n^{-2/3 - \delta} \leq p \leq n^{-16/35}$, and let $u,v,x,y$ be distinct vertices.  The probability that
    $uv$ and $xy$ are edges, and $C_{uv}\simeq C_{xy}$ is $o(n^{-4})$.
\end{lemma}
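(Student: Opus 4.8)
The plan is to condition on the edges $uv$ and $xy$ being present (an event of probability $p^2$) and then bound the probability that, under this conditioning, the two distinguished-edge graphs $C_{uv}$ and $C_{xy}$ are isomorphic. Since $p^2 = n^{-4/3 - 2\delta}$, it suffices to show the conditional isomorphism probability is $o(n^{-4} \cdot p^{-2}) = o(n^{-8/3 + 2\delta})$. The key structural observation is that $C_{uv}$ contains the \emph{common neighbourhood} $W_{uv} := \Gamma(u) \cap \Gamma(v)$ — the set of vertices adjacent to both endpoints — and more generally the set of vertices at distance $\le 2$ from both $u$ and $v$. The size $|W_{uv}|$ is distributed as $\Bin(n-2, p^2)$, so its mean is $\mu := (n-2)p^2 \approx n^{-1/3 - 2\delta}$, which tends to $0$ when $\delta$ is small; and $W_{xy}$ is (conditionally) an independent copy. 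For $C_{uv} \simeq C_{xy}$ we certainly need $|W_{uv}| = |W_{xy}|$.

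The argument then splits according to this common size $k := |W_{uv}| = |W_{xy}|$. The dominant contribution should come from $k = 0$; here $C_{uv}$ is just the edge $uv$ together with pendant structure hanging off $u$ and off $v$ separately (vertices at distance $2$ from both but not adjacent to both), and similarly for $C_{xy}$ — and I would show that even conditioned on $|W_{uv}| = |W_{xy}| = 0$, the finer structure of $C_{uv}$ and $C_{xy}$ (for instance the number of vertices lying at distance $2$ from both $u$ and $v$, which is a sum of indicators over paths of length $2$ through a common intermediate vertex) fails to match except with probability $o(1)$, using an anticoncentration bound — \Cref{thm:sup} or \Cref{thm:DML} — on the relevant count. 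Actually, the cleanest route is probably to pin down one well-chosen statistic of $C_{uv}$ that is (a) isomorphism-invariant, (b) a sum of many independent bounded contributions with enough variance that \Cref{thm:sup} gives it a mode of probability $o(n^{-\text{something}})$, and (c) such that $C_{uv}$ and $C_{xy}$ having isomorphic neighbourhoods forces equality of this statistic; then $\prob{C_{uv} \simeq C_{xy}} \le \prob{\text{stat}(C_{uv}) = \text{stat}(C_{xy})} \le \sup_t \prob{\text{stat}(C_{uv}) = t}$ by independence. For $k \ge 1$, one pays a factor roughly $(np^2)^k$ per common neighbour for \emph{both} sides to have that many, i.e.\ $\mu^{2k}$, and multiplies by the number of ways to embed — crude first-moment bounds of the form $\prob{|W_{uv}| \ge k} \le e(np^2)^k$ from \Cref{lem:small-mean} handle the tail, so only $k = 0$ and a few small $k$ need the delicate anticoncentration treatment, and each such term is $o(n^{-8/3 + 2\delta})$ provided $\delta$ is chosen small enough.

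The main obstacle, as usual in these arguments, is choosing the right isomorphism-invariant statistic and getting a strong enough anticoncentration bound for it. The natural candidate, the number of vertices at distance exactly $2$ from both $u$ and $v$, is a Poisson-binomial sum over intermediate vertices $w$ of indicators ``$w \sim $ (something in $\Gamma(u))$ and $w \sim$ (something in $\Gamma(v)$)'', but this is not literally a sum of independent variables because the sets $\Gamma(u), \Gamma(v)$ are themselves random and shared across terms; one has to condition on $\Gamma(u)$ and $\Gamma(v)$ (whose sizes concentrate around $np \approx n^{13/35}$, comfortably large) and only then apply \Cref{thm:sup} to the conditionally-independent sum over the remaining $n - O(np)$ vertices. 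After conditioning, $\sum_i (1 - p_i)$ is of order $n$, so \Cref{thm:sup} gives a mode bound of order $n^{-1/2}$, which by itself is nowhere near $o(n^{-8/3+2\delta})$ — so a single statistic is insufficient and one must iterate: use the anticoncentration on several (essentially independent) statistics, or equivalently bound $\prob{C_{uv} \simeq C_{xy}}$ by the probability that a whole vector of independent-ish invariants matches, each contributing a factor $O(n^{-1/2})$, until the product beats $n^{-8/3}$. Managing the dependencies between these invariants, and bookkeeping the conditioning on the degree sets and on $W_{uv}, W_{xy}$, is where the real work lies; the power-of-$n$ savings $\delta$ will come out of how much slack this multi-statistic argument leaves.
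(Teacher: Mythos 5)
Your opening reduction (pull out the factor $p^2$ and bound the conditional isomorphism probability) is fine, and you have correctly identified the general flavour of the argument: condition and reveal edges so as to manufacture independence, then apply the Rogozin bound (\Cref{thm:sup}) to isomorphism-invariant statistics of $C_{uv}$, and iterate over several such statistics because one is not enough. But the proposal stops exactly where the work starts. You acknowledge that your candidate statistic falls far short and that one must match a whole vector of ``essentially independent'' invariants while ``managing the dependencies'' -- yet you neither construct such a family nor control those dependencies, and that construction is the entire content of the paper's proof. The paper's invariant is the unordered degree sequence of $\Gamma_1(v)$ into $\Gamma_2(u)\setminus\Gamma_1(v)$ (visible inside $C_{uv}$ even though $\Gamma_2(u)$ itself is not); after revealing the edges at $u,v,x,y$ and from $\Gamma_1(u)\cup\Gamma_1(x)$, the degrees $Y_i-\eps_i$ become independent binomials up to a controlled error set $M$ and small revealed corrections $\eps_i$, and one then iteratively matches the counts $Z_{k_i}$ and $Z'_{k_i}$ for a large constant number $\ell$ of values $k_i$ near the mean, gaining a factor $O(mp^{1/4})$ per value via \Cref{thm:DML} in the denser range and a Poisson-approximation bound via \Cref{thm:le-cam} in the range $n^{-2/3-\delta}\le p\le n^{-2/3}\log\log n$. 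None of this machinery -- the choice of invariant, the bookkeeping of $M$ and $\eps_i$, the iterative revealing that keeps the unrevealed trials independent, and the two regimes -- appears in your sketch, so what you have is a plan rather than a proof.

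Moreover, two of your quantitative claims are wrong over most of the stated range, so the framework you do spell out would not survive. First, $\mu=(n-2)p^2$ tends to $0$ only near the bottom of the range: at $p=n^{-16/35}$ one has $np^2=n^{3/35}\to\infty$, so $W_{uv}=\Gamma(u)\cap\Gamma(v)$ is typically large, the tail bound $\prob{|W_{uv}|\ge k}\le e(np^2)^k$ from \Cref{lem:small-mean} is useless, and the decomposition ``$k=0$ dominates, pay $\mu^{2k}$ for $k\ge1$'' collapses; matching $|W_{uv}|=|W_{xy}|$ then only costs $\Theta(\mu^{-1/2})$, not a large power of $n$. (Relatedly, the conditional target is $o(n^{-4}p^{-2})$, which is $p$-dependent and at the top of the range is smaller than the $n^{-8/3+2\delta}$ you quote.) Second, in your application of \Cref{thm:sup} to the number of vertices at distance two from both $u$ and $v$, the quantity $\sum_i(1-p_i)$ is not of order $n$: for an indicator with small success probability $q$ one has $1-p_i=q$, so the sum is of order the mean of the count (roughly $n^3p^4$, far below $n$ when $p$ is near $n^{-2/3}$), and the mode bound is correspondingly weaker than $n^{-1/2}$. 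These slips do not change your own conclusion that a single statistic is insufficient, but they mean the $k$-splitting scaffolding would have to be discarded rather than merely supplemented.
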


Before proving \Cref{lem:2-neighbourhoods}, we explain how it implies \Cref{thm:r=2}.

\begin{proof}[Proof of \Cref{thm:r=2}]
    For each edge $uv$ in $G\in\mathcal G(n,p)$, we colour the edge $uv$ with the isomorphism class of $C_{uv}$, and note that for each vertex $u$ it is possible to determine the colour of all edges incident with $u$ from the 2-ball around $u$. Indeed, if $x$ is a vertex at distance at most 2 from $u$ and $v w x$ is a path from $v$ to $x$, then $v$, $w$ and $x$ are all contained in the 2-ball around $u$. This means we can determine which vertices in the 2-ball around $u$ are also in the 2-ball around $v$, and we can determine the isomorphism class of $C_{uv}$. It follows from \Cref{lem:2-neighbourhoods} that with high probability no two disjoint edges have the same colour, and by \Cref{lem:edge-coloured}, we can then reconstruct $G$.
\end{proof}

\begin{figure}
    \centering
    \begin{tikzpicture}
        \draw (-1,0) -- (1,0);
        \draw (1 - 0.489898, -1 + 0.04) -- (1,0) -- (1 +0.489898, -1 + 0.04);
        \draw (-1 - 2*0.489898, -2 + 0.08) -- (-1,0) -- (-1 + 2*0.489898, -2 + 0.08);

        \draw[fill] (1,0) circle (.05);
        \node[above] at (1,0) {$v$};
        \draw[fill] (-1,0) circle (.05);
        \node[above] at (-1,0) {$u$};
        \draw  (-1,-1) ellipse (0.5 and 0.2);
        \draw  (-1,-2) ellipse (1 and 0.4);
        \draw  (1,-1) ellipse (0.5 and 0.2);

        \draw[fill,black!60!green] (0.8,-1) circle (.05);
        \draw[black!60!green] (0.8, -1) -- (-0.4, -2);
        \draw[black!60!green] (0.8, -1) -- (-0.6, -2);
        \draw[black!60!green] (0.8, -1) -- (-0.8, -2);
        \draw[black!60!green] (0.8, -1) -- (-0.2, -2);

        \draw (-1.3, -1) -- (-1,0);
        \draw (-1.3, -1) -- (1,0);
        \draw[fill, red] (-1.3,-1) circle (.05);
        \draw[red] (-1.3,-1) -- (-1.4, -2);
        \draw[red]  (-1.3,-1) -- (-1.6, -2);
        \draw[red]  (-1.3,-1)-- (-1.8, -2);
        \draw[red]  (-1.3,-1) -- (-1.2, -2);
        
    \end{tikzpicture}
    \caption{We will show the $C_{uv}$ are unique by considering the number of edges each vertex in $\Gamma_1(v)$ has to $\Gamma_2(u) \setminus \Gamma_1(v)$. The vertex adjacent to $u$ and $v$ shown in red will be problematic and we will view its degree as an ``error''.}
    \label{fig:problematic}
\end{figure}
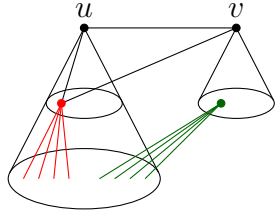

Before giving the full details of the proof of \Cref{lem:2-neighbourhoods}, let us sketch our strategy.
Suppose that $C_{uv}$ and $C_{xy}$ are isomorphic with $u$ mapping to $x$ and $v$ mapping to $y$. Then it must be the case that the unordered degree sequence of $\Gamma_1(v)$ into $\Gamma_2(u) \setminus \Gamma_1(v)$ and of $\Gamma_1(y)$ into $\Gamma_2(x) \setminus \Gamma_1(y)$ are equal, and we will show that the probability of this event is $o(n^{-4})$. We note that although we cannot see the whole of $\Gamma_2(u)$ in $C_{uv}$, we do see all the edges from $\Gamma_1(v)$ to $\Gamma_2(u)$ and we can therefore read off the degree sequence of $\Gamma_1(v)$ into $\Gamma_2(u)\setminus \Gamma_1(v)$. By symmetry, the probability of an isomorphism which maps $u$ to $y$ and $v$ to $x$ will also be $o(n^{-4})$.

Fix $u$ and $v$ and suppose that $uv$ is an edge. We reveal the edges from $u$ and $v$, and then from $\Gamma_1(u)$. Given a vertex $i$ in $\Gamma_1(v) \setminus \Gamma_1(u)$ which is not $u$, we have not revealed any of its edges to $\Gamma_2(u) \setminus \Gamma_1(v)$ so the number of such edges $b(i)$ is a binomial random variable with $|\Gamma_2(u)|$ trials and success probability $p$. When $p$ is only a little bit bigger than $n^{-2/3}$, we have $|\Gamma_2(u)| = \Theta(n^2p^2)$ and $b(i)$ takes each of the $np^{3/2}$ most likely values with probabilities which are $\Theta(n^{-1}p^{-3/2})$. If we ignore problematic vertices (see \Cref{fig:problematic} for an example of a problematic vertex) and assume that every vertex is an independent binomial, the number of vertices $N_k$ in $\Gamma_1(v)$ with a fixed likely degree $k$ is a binomial random variable with $\Theta(np)$ trials and success probability $\Theta(n^{-1}p^{-3/2})$. We also do the same thing for the edge $xy$ to find that the probability that there are $N_k$ vertices in $\Gamma_1(y)$ with $k$ edges to $\Gamma_2(x) \setminus \Gamma_1(y)$ is $O(p^{+1/4})$. By considering multiple values of $k$, we can show that the probability that $C_{uv}$ is isomorphic to $C_{xy}$ is $o(n^{-4})$.

Unfortunately, this sketch has glossed over many details, most notably the dependencies between the different values we consider, and we will have to work considerably harder to make the argument rigorous. At various points we will see different behaviour for different values of $p$ in the range (e.g. the number of vertices in $\Gamma_2(u)$ is not $\Theta(n^2p^2)$ when $p = \omega(n^{-1/2})$), and we will have to employ different arguments for different ranges of $p$.

Finally, we remark that our proof actually gives an efficient algorithm for reconstructing a random graph $G\in\mathcal G(n,p)$ from its 2-neighbourhoods which succeeds with high probability. Instead of colouring the edge $uv$ by the isomorphism class of $C_{uv}$, we can colour it by a combination of the unordered degree sequence of $\Gamma_1(v)$ into $\Gamma_2(u)\setminus \Gamma_1(v)$ and the unordered degree sequence of $\Gamma_1(u)$ into $\Gamma_2(v) \setminus \Gamma_1(u)$. The proof of \Cref{lem:2-neighbourhoods} shows that any two disjoint edges get the same colour with probability $o(n^{-4})$, and  \Cref{lem:edge-coloured} applies with high probability. These degree sequences can clearly be calculated efficiently.

\begin{proof}[Proof of \Cref{lem:2-neighbourhoods}]
    Fix four vertices $u$, $v$, $x$ and $y$, and condition on the event that $uv$ and $xy$ are edges. Let $M$ be the set of vertices which are adjacent to at least $2$ of the vertices in  $\{x, y, u,v\}$. These vertices introduce dependence between the degree sequences we care about, and we will view these vertices as introducing an ``error'' of size at most $|M|$. We are therefore interested in an upper bound for $|M|$. There are 6 pairs of vertices from $\{x, y, u, v\}$ and the probability that a vertex is adjacent to a given pair is $p^2$, so $|M|$ is dominated by a $\Bin(n, 6p^2)$ random variable.

    \begin{claim}
        Let \[m = \begin{cases}
                12n^{1/9} & p > n^{-11/20},    \\
                40        & p \leq n^{-11/20}.
            \end{cases}\]
        Then
        \[\prob{|M| > m} = o(n^{-4}).\]
    \end{claim}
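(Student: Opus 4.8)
The plan is to bound the upper tail of $|M|$, where $|M| \le |M'|$ for $M' \sim \Bin(n, 6p^2)$, using the simple small-mean bound \Cref{lem:small-mean} in the regime where $6p^2 n = O(1)$, i.e.\ $p = O(n^{-1/2})$, and the Chernoff bound \Cref{lem:chernoff} in the regime where $6p^2 n \to \infty$, i.e.\ $p = \omega(n^{-1/2})$. Since the boundary between the two cases in the claim is at $p = n^{-11/20}$ rather than $n^{-1/2}$, I will actually split at $n^{-11/20}$ and check that the small-mean bound still works up to $p = n^{-11/20}$ (so $6p^2n = O(n^{-1/10})$, which is still $o(1)$, so this is fine and in fact the mean is tending to $0$).

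First, suppose $p \le n^{-11/20}$, so that $\mu := 6p^2 n \le 6 n^{-1/10} = o(1)$. By \Cref{lem:small-mean} applied with $k = 40$ to the dominating variable $M' \sim \Bin(n, 6p^2)$,
\[
  \prob{|M| > 40} \le \prob{M' \ge 40} \le e \,(6 p^2 n)^{40} \le e\,(6 n^{-1/10})^{40} = e\, 6^{40}\, n^{-4},
\]
which is unfortunately only $O(n^{-4})$, not $o(n^{-4})$. To get the extra factor, I instead use $k = 41$: $\prob{M' \ge 41} \le e (6 p^2 n)^{41} \le e\, 6^{41} n^{-41/10} = o(n^{-4})$. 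So in fact the statement as written with threshold $40$ should be read as giving $\prob{|M| \ge 41} = o(n^{-4})$, i.e.\ $\prob{|M| > 40} = o(n^{-4})$ — using $k=41$ in \Cref{lem:small-mean}, since $\{|M| > 40\} = \{|M| \ge 41\}$. This resolves the case $p \le n^{-11/20}$.

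Second, suppose $n^{-11/20} < p \le n^{-16/35}$ (we may assume this upper bound since the paper has already reduced to $p \le n^{-16/35}$). Here $\mu = 6 p^2 n$ satisfies $\mu \ge 6 n^{1 - 11/10} = 6 n^{-1/10}$... that's still going to $0$, so I need to be more careful: actually $p > n^{-11/20}$ gives $p^2 n > n^{1 - 11/10} = n^{-1/10}$, which tends to $0$, so Chernoff with $\mu \to \infty$ does \emph{not} apply here either. The correct split must therefore be: \Cref{lem:small-mean} handles all $p$ with $p^2 n = O(1)$, i.e.\ essentially all $p \le n^{-1/2}$ (and a bit beyond), while for $p = \omega(n^{-1/2})$ we have $\mu = 6p^2 n \to \infty$ and we apply \Cref{lem:chernoff} with $\eps = 1$: $\prob{M' \ge 2\mu} \le e^{-\mu/3}$, and one checks $m = 12 n^{1/9} \ge 2\mu = 12 p^2 n$ precisely when $p \le n^{-4/9}$, which holds since $p \le n^{-16/35}$ and $16/35 > 4/9$. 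Then $\prob{|M| > m} \le e^{-\mu/3} = e^{-2 p^2 n} \le e^{-2 n^{1/35}}$ (using $p > n^{-17/35}$, say, well inside the range $p = \omega(n^{-1/2})$), which is $o(n^{-4})$. The genuinely relevant case split in the claim is thus $p \le n^{-1/2}$ (small mean, $m$ constant) versus $p > n^{-1/2}$ (large mean, $m$ polynomial); I would reconcile this with the stated threshold $n^{-11/20}$ by noting that for $n^{-11/20} < p \le n^{-1/2}$ both bounds are available and either choice of $m$ works, so the stated piecewise definition is valid.

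The main obstacle is purely bookkeeping: pinning down exactly which tail bound to invoke on which subinterval of $(0, n^{-16/35}]$ and checking the numerical inequalities between the exponents $11/20$, $1/2$, $16/35$, $4/9$, $1/9$, $1/10$ so that in each regime the chosen $m$ exceeds (twice) the mean and the resulting tail probability beats $n^{-4}$ with room to spare. There is no conceptual difficulty — $|M|$ is stochastically dominated by a binomial with a completely explicit mean — so once the interval decomposition is fixed the estimates are one-line applications of \Cref{lem:small-mean} and \Cref{lem:chernoff}.
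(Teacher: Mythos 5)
Your first case ($p \le n^{-11/20}$) is exactly the paper's argument: dominate $|M|$ by $\Bin(n,6n^{-11/10})$ and apply \Cref{lem:small-mean} with $k=41$, giving $e(6n^{-1/10})^{41}=o(n^{-4})$. The problem is the second case. You apply \Cref{lem:chernoff} with $\eps=1$ \emph{relative to the true mean} $\mu=6p^2n$, which yields only $e^{-\mu/3}=e^{-2p^2n}$; this beats $n^{-4}$ only when $p^2n=\omega(\log n)$, and your verification indeed quietly assumes $p>n^{-17/35}$. The claim, however, must hold for all $p>n^{-11/20}$ (up to $n^{-16/35}$), and in the intermediate range $n^{-11/20}<p\lesssim\sqrt{\log n/n}$ neither of your tools, as you deploy them, works: Chernoff at $\eps=1$ gives $e^{-\Theta(p^2n)}$, which is not $o(n^{-4})$ when $p^2n=O(\log n)$, and the small-mean bound with the growing threshold $12n^{1/9}$ gives $e\mu^{12n^{1/9}}$, which is useless once $\mu\ge 1$ (e.g.\ $\mu=6$ at $p=n^{-1/2}$). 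Your proposed reconciliation — move the split to $n^{-1/2}$ and say ``either choice of $m$ works'' in $(n^{-11/20},n^{-1/2}]$ — is also not right: with $m=40$ and $p$ of order $n^{-1/2}$ the mean of $|M|$ is of constant order and $\prob{|M|>40}$ is bounded below by a constant, so the constant-$m$ statement genuinely fails there (this is precisely why the paper's threshold sits at $n^{-11/20}$, the point where $6p^2n\le 6n^{-1/10}$ makes the $k=41$ small-mean bound just beat $n^{-4}$).

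The missing idea, which is how the paper closes this range in one line, is to inflate the success probability before applying Chernoff: since $p\le n^{-16/35}\le n^{-4/9}$, the variable $|M|$ is dominated by $\Bin(n,6n^{-8/9})$, whose mean is pinned at exactly $6n^{1/9}\to\infty$ regardless of where $p$ sits in $(n^{-11/20},n^{-16/35}]$. Chernoff with $\eps=1$ and threshold $12n^{1/9}$ (twice that mean) then gives $\prob{|M|>12n^{1/9}}\le\exp(-2n^{1/9})=o(n^{-4})$ uniformly over the whole first case. (Equivalently, you could keep the true mean but take $\eps$ growing, since $12n^{1/9}\ge 2\mu$ gives $\prob{|M|\ge 12n^{1/9}}\le\exp\left(-(12n^{1/9}-\mu)/3\right)\le\exp(-2n^{1/9})$; but the fixed-$\eps$ bound you used does not suffice.) With that replacement your argument is complete; as written, the range $n^{-11/20}<p\le n^{-17/35}$ is not covered.
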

    \begin{proof}
        The first case follows almost immediately from the Chernoff bound in \Cref{lem:chernoff}. Indeed, since $p \leq n^{-16/35} \leq n^{-4/9}$, $|M|$ is clearly dominated by a $\Bin(n, 6n^{-8/9})$ random variable, and the probability that this exceeds $12n^{1/9}$ is at most $\exp(-2 n^{1/9}) = o(n^{-4})$.

        The second case follows from \Cref{lem:small-mean}. In this case, $|M|$ is stochastically dominated by a $\Bin(n, 6n^{-11/10})$ random variable and
        \[\prob{|M| \geq 41} \leq e (6n^{-1/10})^{41} = o(n^{-4}).\]
    \end{proof}

    We now look to bound the size of the neighbourhood of a vertex.

    \begin{claim}
        \label{claim:neighbourhoods}
        Fix a vertex $i$, and let
        \[\lambda(i)=(n-1-d(i))(1-(1-p)^{d(i)}).\]
        Then with probability $1-o(n^{-4})$ we have
        \[\frac{np}{2} \leq d(i) \leq 2 np,\]
        and
        \[||\Gamma_2(i)|-\lambda(i)|\leq (np)^{5/4}.\]
    \end{claim}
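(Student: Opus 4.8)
The plan is to condition on the $1$-neighbourhood of $i$, observe that $|\Gamma_2(i)|$ is then a binomial random variable with mean exactly $\lambda(i)$, and finish with a Chernoff bound; the key structural input is that $p \ge n^{-2/3-\delta}$ forces $np$ to be polynomially large.

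First I would dispatch the degree bounds. Since $\delta$ is a small constant (in particular $\delta < 1/3$), we have $np \ge n^{1/3-\delta} = n^{\Omega(1)} \to \infty$, so as $d(i) \sim \Bin(n-1,p)$, two applications of \Cref{lem:chernoff} give $\prob{d(i) < np/2} + \prob{d(i) > 2np} \le \exp(-\Omega(np)) = \exp(-n^{\Omega(1)}) = o(n^{-4})$. Write $\mathcal D$ for the event $\{np/2 \le d(i) \le 2np\}$; everything below is carried out conditionally on $\mathcal D$.

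Next I would identify the conditional law of $|\Gamma_2(i)|$. Reveal all edges incident to $i$: this determines $\Gamma_1(i)$ and $d(i)$ while leaving every other potential edge unexposed. For each of the $n-1-d(i)$ vertices $j \notin \{i\}\cup\Gamma_1(i)$, the event $j \in \Gamma_2(i)$ is precisely that $j$ sends an edge to $\Gamma_1(i)$, which has probability $1-(1-p)^{d(i)}$, and these events are independent across $j$ since they involve disjoint sets of potential edges. Hence, conditionally on $\Gamma_1(i)$,
\[ |\Gamma_2(i)| \sim \Bin\bigl(n-1-d(i),\ 1-(1-p)^{d(i)}\bigr), \]
a binomial with mean precisely $\lambda(i)$.

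Finally I would apply \Cref{lem:chernoff} to this conditional binomial. On $\mathcal D$, Bernoulli's inequality ($1-(1-p)^{d(i)} \le d(i)p$) together with $d(i) \le 2np$ yields the crude bound $\lambda(i) \le (n-1-d(i))\,d(i)\,p \le 2(np)^2$. Taking relative deviation $\eps := (np)^{5/4}/\lambda(i)$ in \Cref{lem:chernoff} (the lower tail being vacuous when $\eps > 1$), both deviation probabilities are at most $\exp\bigl(-\tfrac13\min\{\eps,1\}\,\eps\,\lambda(i)\bigr)$, and since $\eps^2\lambda(i) = (np)^{5/2}/\lambda(i) \ge \tfrac12\sqrt{np}$ and $\eps\lambda(i) = (np)^{5/4} \ge \sqrt{np}$, this is at most $\exp(-\tfrac16\sqrt{np}) = o(n^{-4})$. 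A union bound with the estimate for $\mathcal D^c$ completes the proof. The only point requiring care is checking that this last exponent beats $n^{-4}$ uniformly across the whole range of $p$; but the very crude estimate $\lambda(i) \le 2(np)^2$ already does this, so no case analysis in $p$ is needed here, and the genuine difficulty of \Cref{lem:2-neighbourhoods} lies entirely in the later claims.
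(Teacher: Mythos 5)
Your proposal is correct and follows essentially the same route as the paper: reveal the edges at $i$ so that, conditionally, $|\Gamma_2(i)|$ is $\Bin\bigl(n-1-d(i),\,1-(1-p)^{d(i)}\bigr)$ with mean $\lambda(i)$, handle the degree by \Cref{lem:chernoff}, and bound the deviation by another Chernoff bound using $\lambda(i)\le 2n^2p^2$. The only (harmless) difference is that you apply \Cref{lem:chernoff} directly at the target deviation $(np)^{5/4}$, which lets you skip the paper's intermediate deviation $\lambda(i)^{9/16}$ and its accompanying lower bound on $\lambda(i)$.
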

    \begin{proof}
        The degree of $i$ follows a $\Bin(n-1,p)$ distribution so using a Chernoff bound (see \Cref{lem:chernoff}), the probability that $d(i)$ is less than $np/2$ is at most
        \[ 4\exp\left(-\frac{(n-2)^2p}{8(n-1)}\right) = \exp\left(-\Theta(np)\right) = o(n^{-4}).\]
        In the other direction, the other bound in \Cref{lem:chernoff} shows that the probability $d(i) \geq 2np$ is also at most $4\exp(-np/3) = o(n^{-4})$.

        Given $d(i)$, the size of the second neighbourhood of $i$ is distributed like
        \[X\sim \Bin(n-1-d(i),1 - \left(1-p\right)^{d(i)}),\]
        so $\E[X]=\lambda(i)$. If $\lambda(i) = \omega(\log^8n)$, then \[\prob{|X - \lambda(i)| \geq \lambda(i)^{9/16}} \leq \exp(- \Theta(\lambda(i)^{1/8})) = o(n^{-4}).\] Hence, it suffices to prove that with probability $o(n^{-4})$ we have $\lambda(i) = \omega(\log^8n)$ and (for large enough $n$) $\lambda(i)^{9/16} \leq (np)^{5/4}$.

        For the first statement, we may assume that $np/2 \leq d(i)\leq 2np$. Using that $1 - t \leq e^{-t} \leq 1 - t/2$ for all $t \in [0,1]$, we have
        \begin{align*}
            \lambda(i)
             & = (n - 1 - d(i)) \left(1 - (1 - p)^{d(i)}\right)  \\
             & \geq \tfrac{n}{2}   \left(1 - (1-p)^{np/2}\right) \\
             & \geq \tfrac{n}{2} (1-e^{-np^2/2})                 \\
             & \geq \tfrac{n}{2}\min\{1-e^{-1}, np^2/4\}
        \end{align*}
        for large enough $n$. This is $\omega(\log n)$ in our range of $p$.

        For the second statement, note that $\lambda(i)\le n(1-(1-p)^{2np})\le 2n^2p^2$, by Bernoulli's inequality.
    \end{proof}

    We will shortly reveal the edges from $\Gamma_1(u)$ and from $\Gamma_1(x)$ to discover their second neighbourhoods. Unfortunately, this may reveal some edges from $\Gamma_1(v)$ to $\Gamma_2(u)\setminus \Gamma_1(v)$. For example, if $i \in \Gamma_1(v)$, then we will be revealing all edges from $i$ to $\Gamma_1(x)$. Some of the vertices in $\Gamma_1(x)$ may also be in $\Gamma_2(u) \setminus \Gamma_1(v)$, so we have revealed some of the edges from $i$ to $\Gamma_2(u) \setminus \Gamma_1(v)$.
    We will use the following lemma to control how many edges have been revealed.

    \begin{claim}
        Let $t \in \{u, v, x, y\}$.
        If $n^{-11/20} \leq p \leq n^{-4/9}$, then the probability there exists a vertex $j \not \in \{t\} \cup \Gamma_1(t)$ which is adjacent to at least $\left(n^2p^3\right)^{1/4}$ vertices in $\Gamma_1(t)$ is $o(n^{-4})$.

        If $p \leq n^{-11/20}$, then the probability there exists a vertex $j \not \in \{t\} \cup \Gamma_1(t)$ which is adjacent to at least $51$ vertices in $\Gamma_1(t)$ is $o(n^{-4})$.
    \end{claim}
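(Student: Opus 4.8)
The plan is a union bound over the candidate vertices $j$, after conditioning appropriately so that the number of neighbours of $j$ inside $\Gamma_1(i)$ becomes a binomial variable with small mean. First I would use \Cref{claim:neighbourhoods} to discard the event $d(i) > 2np$, which has probability $o(n^{-4})$, and then condition on $\Gamma_1(i)=S$ for some fixed set $S$ with $|S| = d(i) \le 2np$. The point is that every edge of the form $\{j,s\}$ with $j\notin\{i\}\cup S$ and $s\in S$ avoids $i$, so it is not constrained by the event $\Gamma_1(i)=S$; the only further conditioning in force at this stage is $uv,xy\in E$. I would therefore split the vertices $j$ into the (at most $n$) \emph{generic} ones with $j\notin\{u,v,x,y\}$, for which conditioning on $uv,xy\in E$ forces no edge from $j$ into $S$ and hence $|N(j)\cap S|\sim\Bin(|S|,p)$ exactly, and the at most three \emph{special} ones $j\in\{u,v,x,y\}\setminus\{i\}$, for which a short case check shows at most one of $uv$, $xy$ can be an edge from $j$ into $S$, so that $|N(j)\cap S|$ is stochastically dominated by $1+\Bin(|S|,p)$. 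In all cases $|N(j)\cap S|$ is dominated by $1+\Bin(2np,p)$, a binomial with mean at most $2np^2$.

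Now I would estimate the relevant tail in each regime. For $p\le n^{-11/20}$ we have $np^2\le n^{-1/10}$, and \Cref{lem:small-mean} gives $\prob{\Bin(2np,p)\ge k}\le e(2np^2)^k$. For a generic $j$ this bounds the probability of having at least $51$ neighbours in $S$ by $e(2n^{-1/10})^{51} = O(n^{-51/10})$, and summing over the at most $n$ generic vertices still leaves $O(n^{-41/10}) = o(n^{-4})$ (this is exactly why $51\cdot\tfrac1{10}>5$, hence the constant $51$ and the cutoff $n^{-11/20}$); each of the three special vertices needs $\Bin(2np,p)\ge 50$, contributing $O(n^{-5})=o(n^{-4})$ apiece. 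For $n^{-11/20}\le p\le n^{-4/9}$ (the upper bound is automatic from $p\le n^{-16/35}$), the mean $np^2$ can be polynomially large so the Poisson-type bound is useless and I would instead use $\prob{\Bin(2np,p)\ge k}\le\binom{2np}{k}p^k\le\bigl(\tfrac{2enp^2}{k}\bigr)^k$ with $k=\lceil(n^2p^3)^{1/4}\rceil$. Here $\tfrac{np^2}{k} = n^{1/2}p^{5/4}\le n^{-1/18}$, so each $j$ contributes at most $n^{-k/19}$; since $p\ge n^{-11/20}$ forces $k\ge n^{7/80}$, this is super-polynomially small, and the union bound over all $j$ (and the discarded event $d(i)>2np$) is trivially $o(n^{-4})$.

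The calculation is essentially routine; the only thing that needs genuine care is the dependency bookkeeping in the first paragraph — isolating the $O(1)$ vertices in $\{u,v,x,y\}$, where conditioning on $uv,xy\in E$ can pin down one edge into $\Gamma_1(i)$, from the $\Theta(n)$ generic vertices, where it cannot — together with checking that the thresholds ($51$, $n^{-11/20}$, and $k=(n^2p^3)^{1/4}$) are chosen with enough slack that $n$ times the per-vertex tail estimate genuinely beats $n^{-4}$ rather than merely matching it.
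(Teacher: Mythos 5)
Your proposal is correct and follows essentially the same route as the paper: assume $d(i)\le 2np$ (an event of probability $1-o(n^{-4})$), bound for each $j$ the probability of having many neighbours in $\Gamma_1(i)$ by a tail estimate for a binomial with at most $2np$ trials (\Cref{lem:small-mean} in the regime $p\le n^{-11/20}$, a Chernoff-type bound in the denser regime), and take a union bound over the at most $n$ choices of $j$. Your substitution of the elementary bound $\binom{2np}{k}p^k\le\bigl(2enp^{2}/k\bigr)^{k}$ for the paper's Chernoff bound, and your more explicit bookkeeping of the conditioning on $uv,xy\in E$ and the special vertices $j\in\{u,v,x,y\}$, are only minor (and harmless) variations on the same argument.
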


    \begin{proof}
        Suppose first that $n^{-11/20} \leq p \leq n^{-4/9}$. For a given vertex $j$, the number of neighbours in $\Gamma_1(t)$ is a binomial random variable with $d(t) = |\Gamma_1(t)|$ trials and success probability $p$. We may assume that $d(t)\leq 2np$ and, by applying a Chernoff bound (\Cref{lem:chernoff}), we find that the probability that $j$ is adjacent to at least $\left(n^2p^3\right)^{1/4}$ vertices in $\Gamma_1(t)$ is at most
        \[\exp\left(-\Theta(n^2p^3)^{1/4}\right),\] provided $np^{5/2} \to 0$. There are at most $n$ choices for $j$ and applying a union bound completes the proof.

        To prove the second part of the claim where $p \leq n^{-11/20}$, we use \Cref{lem:small-mean}. For a given vertex $j$, the number of neighbours of $j$ is dominated by a binomial random variable with mean $2np^2 \leq 2n^{-1/10}$. Hence, by \Cref{lem:small-mean}, the probability that a vertex has at least $51$ neighbours in $\Gamma_1(t)$ is $O(n^{-51/10})$. Taking a union bound over all choices for the vertex $j$, the probability that any suitable $j$ is adjacent to at least $51$ vertices from $\Gamma_1(t)$ is $o(n^{-4})$ as required.
    \end{proof}

    We now reveal the edges from $u$, $v$, $x$ and $y$, the edges from $\Gamma_1(u)$ and $\Gamma_1(x)$ and the edges between the neighbours of $u$, $v$, $x$ and $y$. None of the other edges need to be revealed and they are still each present independently with probability $p$. We also check that the following have all occurred and note that each of them occurs with probability $1 - o(n^{-4})$.
    \begin{itemize}
        \item $|M|$ is bounded above by $m$,
        \item $d(u), d(v), d(x)$ and $d(y)$ are all in $[np/2, 2 np]$,
        \item $\left| |\Gamma_2(u)| - \lambda(u)\right| \leq (np)^{5/4}$ and $\left| |\Gamma_2(x)| - \lambda(x)\right| \leq (np)^{5/4}$,
        \item  for every vertex $a \in \Gamma_1(v)$, the sets $\Gamma_1(a) \cap \Gamma_1(u)$, $\Gamma_1(a) \cap \Gamma_1(x)$ and $\Gamma_1(a) \cap \Gamma_1(y)$ have size at most $\left(n^2p^3\right)^{1/4}$ if $n^{-11/20} \leq p \leq n^{-4/9}$, or $51$ if $p \leq n^{-11/20}$, and
        \item for every vertex in $b \in \Gamma_1(y)$, the sets $\Gamma_1(b) \cap \Gamma_1(u)$, $\Gamma_1(b) \cap \Gamma_1(v)$ and $\Gamma_1(b) \cap \Gamma_1(x)$ have size at most $\left(n^2p^3\right)^{1/4}$ if $n^{-11/20} \leq p \leq n^{-4/9}$, or $51$ if $p \leq n^{-11/20}$.
    \end{itemize}

    If there is an isomorphism from $C_{uv}$ to $C_{xy}$ which maps $u$ to $x$, then we must have $d(u) = d(x)$, and we also assume that this event occurs. This means that $\lambda(u) = \lambda(x)$ and we denote the single quantity by $\lambda$.

    Having assumed the above properties, we are ready to begin looking at the the number of edges from each vertex in $\Gamma_1(v)$ to $\Gamma_2(u)\setminus \Gamma_1(v)$ and bound the probability that this unordered degree sequence equals the one from $\Gamma_1(y)$ to $\Gamma_2(x) \setminus \Gamma_1(y)$
    For any $i,j \in V(G)$, let $X_{i,j}$ be the indicator that the edge $\{i,j\}$ is present in $G$, and let \[A = \{x, y, u, v \} \cup \Gamma_1(u) \cup \Gamma_1(v) \cup \Gamma_1(x) \cup \Gamma_1(y).\] For a vertex $i \in \Gamma_1(v)$, let $Y_i$ be the number of edges from $i$ to $\Gamma_2(u) \setminus \Gamma_1(v)$, that is
    \[Y_i = \sum_{w \in \Gamma_2(u) \setminus A} X_{i,w} + \sum_{w \in (\Gamma_2(u) \setminus \Gamma_1(v)) \cap A} X_{i,w}.\]

    The second term consists of (indicators for the) edges adjacent to $u$, $v$, $x$ or $y$ and edges between the neighbourhoods of those vertices. In particular, the second term is already known (as these edges have been revealed) and we denote it by $\eps_i$. The assumptions we have made imply that $\eps_i \leq \eps$ where we have $\eps = 3\left(n^2p^3\right)^{1/4} + 4$ if $n^{-11/20} \leq p \leq n^{-16/35}$ and  $\eps = 157$ if $p \leq n^{-11/20}$.
    Provided that $i \not \in \{u,v, x, y\} \cup M$, we have not revealed any of the indicator variables in the first sum, and $Y_i - \eps_i$ is a binomial random variable with $\lambda + O((np)^{5/4})$ trials and success probability $p$.

    Similarly, for $j \in \Gamma_1(y)$, let $Y'_j$ be the number of edges from $j$ to $\Gamma_2(x) \setminus \Gamma_1(y)$, that is
    \[Y_j' = \sum_{w \in \Gamma_2(x) \setminus A} X_{j,w} + \sum_{w \in (\Gamma_2(x) \setminus \Gamma_1(y)) \cap A} X_{j,w},\]
    and let $\eps_j' = \sum_{w \in (\Gamma_2(x) \setminus \Gamma_1(y)) \cap A} X_{j,w}$. Define $B_1$ and $B_2$ by $ B_1 =  \Gamma_1(v) \setminus \left( M \cup \{u, v, x, y\}\right)$ and $B_2 = \Gamma_1(y) \setminus \left( M \cup \{u, v, x, y\} \right)$, so that the random variables \[\{Y_i - \eps_i : i \in B_1\}  \cup \{Y'_j - \eps'_j : j \in B_2 \}\]
    are independent binomial random variables, each with success probability $p$. Indeed, if $Y_{i_1} - \eps_{i_1}$ and $Y_{i_2} - \eps_{i_2}$ ($i_1 \neq i_2$) are not independent, then there must be $w_1, w_2 \in \Gamma_2(u) \setminus A$ such that $\{i_1, w_1\} = \{i_2, w_2\}$. Since $i_1 \neq i_2$, we would have $i_1 = w_2 \in \Gamma_2(u) \setminus A$, but $i_1 \in A$. If there are $i \in B_1$ and $j \in B_2$ such that $Y_i - \eps_i$ and $Y_j' - \eps_j$ are not independent, there must be $w_1 \in \Gamma_2(u) \setminus A$ and $w_2 \in \Gamma_2(x) \setminus A$ such that $\{i, w_1\} = \{j, w_2\}$. Since $i \not \in M$ and $i \in  \Gamma_1(v)$, we cannot have $i \in \Gamma_1(y)$ and so $i \neq j$. This means $i = w_2$, but then $w_2 \in \Gamma_1(v) \subseteq A$, a contradiction.

    If $C_{uv}$ is isomorphic to $C_{xy}$ with $u$ mapping to $x$, then the multisets $\{Y_i: i \in \Gamma_1(v)\}$ and $\{Y'_j : j \in \Gamma_1(y)\}$ must be equal. Equivalently, the number of $Y_i$ and $Y_j'$ equal to $k$ must be equal for every choice of $k$. The $Y_i$ with $i \not \in B_1$ are potentially problematic, but there are at most $m+4$ of them and so we ignore them and consider the multiset $\{Y_i : i \in B_1\}$ which is ``close" to the multiset $\{Y_i : i \in \Gamma_1(v)\}$. Likewise we can consider the multiset $\{Y_j' : j \in B_2\}$ which is ``close" to the multiset $\{Y_j' : j \in \Gamma_1(y)\}$. Since we have deleted at most $m + 4$ elements from each multiset, the number of $Y_i$ and $Y_j'$ equal to $k$ in the resulting multisets may differ by at most $m+4$.

    Let $Z_k$ be the number of the $Y_i$, where $i \in B_1$, which are equal to $k$ and note that $Z_k$ is the sum of $|B_1|$ independent Bernoulli random variables (with potentially different probabilities due to different $\eps_i$).
    Similarly, let $Z_k'$ be the number of the $Y_j'$, with $j \in B_2$ which are equal to $k$.

    Let $\mu = |\Gamma_2(u) \setminus A| p$ and  $\mu' = |\Gamma_2(x) \setminus A| p$, so that $\expec{Y_i - \eps_i} = \mu$ and $\expec{Y_j' - \eps_j'} = \mu'$. Since $|A| = O(np)$ and $\Gamma_2(u)$ and $\Gamma_2(x)$ are both $\lambda + O((np)^{5/4})$, both $\mu$ and $\mu'$ are $p\lambda  + O(n^{5/4}p^{9/4})$. 
    Without loss of generality let us assume that $\mu' \geq \mu$, and define $k_i$ by $ k_i= \ceil{\mu'} + \eps + i$. Let $\ell$ be a quantity to be determined. We will reveal the values of $Z_{k_i}$ for $i \in [\ell]$ and call these our \emph{target values}. If there is an isomorphism mapping $C_{uv}$ to $C_{xy}$ which sends $u$ to $x$, it must be the case that $|Z_{k_i} - Z_{k_i}'| \leq m + 4$ for all $i \in [\ell]$, and we will iteratively bound the probability that $|Z_{k_i} - Z_{k_i}'| \leq m+4$, conditional on the event that such a bound held for the values $k_1, \dots, k_{i-1}$. If this event does not occur, then $C_{uv}$ and $C_{xy}$ are not isomorphic and we are done. If the event does occur, we reveal the vertices in $B_2$ which have $k_i$ edges to $\Gamma_2(x) \setminus \Gamma_1(y)$ and carry on.

    We now prove a series of claims which we will use to ensure that the probability that $|Z_{k_i} - Z_{k_i}'| \leq m + 4$ is small for every $i$. We start by showing that knowing that $Y_j'$ has not already been revealed only changes the probability that it is revealed in the next step by a constant factor. We will then show that the probability that $Y_j'$ takes a particular value $k_i$ is small, for which we use two different approximations depending on the value of $p$.

    \begin{claim}
        \label{claim:still-many}
        For any $\ell > 0$,
        \[\prob{Z_{k_1} + \dotsb Z_{k_\ell} \leq 3|B_2|/4} =  1 - o(n^{-4}).\]
    \end{claim}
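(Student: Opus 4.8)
The plan is to realise $\sum_{i=1}^{\ell}Z_{k_i}$ as a sum of independent Bernoulli indicators, each with success probability at most $\tfrac12$, so that it is stochastically dominated by $\Bin(|B_1|,\tfrac12)$; one then checks that $\tfrac34|B_2|$ exceeds the mean of this binomial by a constant factor and applies a Chernoff bound. Throughout I work in the conditional space already set up: the edges from $u,v,x,y$ and from $\Gamma_1(u)\cup\Gamma_1(x)$ (and among the relevant neighbourhoods) have been revealed, the listed ``good'' events hold, and, since the point is to rule out an isomorphism $C_{uv}\to C_{xy}$ sending $u\mapsto x$ (hence $v\mapsto y$), we may additionally assume $d(v)=d(y)$ — this is forced exactly as $d(u)=d(x)$ is, because such an isomorphism restricts to a bijection $\Gamma_1(v)\to\Gamma_1(y)$.

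In this conditional space $\mu,\mu',\eps,B_1,B_2$ and the window $W:=\{k_1,\dots,k_\ell\}$ are all determined, and $\sum_{i=1}^{\ell}Z_{k_i}$ is precisely the number of $j\in B_1$ with $Y_j\in W$. For $j\in B_1$ the variable $Y_j-\eps_j$ is a $\Bin(|\Gamma_2(u)\setminus A|,p)$ with mean exactly $\mu$, the offsets $\eps_j\in[0,\eps]$ have already been revealed, and (as established just before the claim) the $Y_j-\eps_j$, $j\in B_1$, are mutually independent; hence the indicators $\mathbf{1}[Y_j\in W]$, $j\in B_1$, are independent. Every element of $W$ is at least $\lceil\mu\rceil+\eps+1$, so $Y_j\in W$ forces $Y_j-\eps_j>\lceil\mu\rceil=\lceil|\Gamma_2(u)\setminus A|\,p\rceil$, and \Cref{lem:median} gives $\bP(Y_j\in W)\le\tfrac12$. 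This bound does not depend on $\ell$, which is exactly why the claim can be stated for every $\ell>0$. (If one preferred not to invoke $d(v)=d(y)$, one could instead bound $\bP(Y_j\in W)$ by $O(\ell/\sqrt{\mu})$ using Rogozin's inequality \Cref{thm:sup}, at the cost of restricting to $\ell=o(\sqrt{\mu})$.)

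It remains to combine these estimates. Since $|M|\le m$ and $d(v)=d(y)$, we have $|B_1|\le|B_2|+m+4$; and as $m=o(np)$ while $|B_2|\ge d(y)-(m+4)\ge np/2-o(np)$, for all large $n$ we get $\tfrac34|B_2|\ge\tfrac23|B_1|=(1+\tfrac13)\cdot\tfrac12|B_1|$. As $\sum_i Z_{k_i}$ is stochastically dominated by $X\sim\Bin(|B_1|,\tfrac12)$, \Cref{lem:chernoff} gives
\[
\bP\Bigl(\textstyle\sum_i Z_{k_i}\ge\tfrac34|B_2|\Bigr)\le\bP\Bigl(X\ge(1+\tfrac13)\cdot\tfrac12|B_1|\Bigr)\le\exp\!\Bigl(-\tfrac{|B_1|}{42}\Bigr)\le\exp\!\Bigl(-\tfrac{np}{126}\Bigr),
\]
which is $o(n^{-4})$ because $np\ge n^{1/3-\delta}=\omega(\log n)$; adding in the already-bounded failure probabilities of the conditioned events proves the claim. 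The claim itself is not hard once the ground is prepared: the one point to get right is the measurability bookkeeping — ensuring $B_1,B_2,W$ are determined by the revealed information and that the $\mathbf{1}[Y_j\in W]$ are genuinely independent — together with the observation that one may assume $d(v)=d(y)$ (without which $|B_1|$ could be a constant factor larger than $|B_2|$ and the $\tfrac12$ bound would not suffice).
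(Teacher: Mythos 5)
Your proposal is correct and follows essentially the same route as the paper: bound $\prob{Y_j\in\{k_1,\dots,k_\ell\}}\le\tfrac12$ via the binomial median (Fact \ref{lem:median}), dominate $Z_{k_1}+\dotsb+Z_{k_\ell}$ by a $\Bin(|B_1|,\tfrac12)$ variable, and finish with a Chernoff bound using $np=\omega(\log n)$. Your explicit observation that the isomorphism forces $d(v)=d(y)$ (so that $|B_1|=(1+o(1))|B_2|$) is a welcome bit of bookkeeping that the paper passes over with the bare assertion $|B_2|=(1+o(1))|B_1|$.
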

    \begin{proof}
        We first bound the probability that a given $Y_i$ is in $\{k_1, \dots, k_\ell\}$, or equivalently, that $Y_i - \eps_i \in \{k_1 - \eps_i, \dots, k_\ell - \eps_i\}$. Since $k_1 - \eps_i > \ceil{\mu}$, this is clearly bounded above by the probability that $Y_i - \eps_i > \ceil{\mu}$. The random variable $Y_i - \eps_i$ follows a binomial distribution and hence the median is $\floor{\mu}$ or $\ceil{\mu}$.  This means \[\prob{Y_i \in \{k_1, \dots, k_\ell\}} \leq \frac{1}{2}.\]
        In particular, the random variable $Z_{k_1} + \dotsb + Z_{k_\ell}$ is dominated by a binomial random variable with $|B_1| = \Theta(np)$ trials and success probability $1/2$. Using \Cref{lem:chernoff}, the probability that such a random variable exceeds $2|B_1|/3$ is at most $\exp(-|B_1|/6^3) = o(n^{-4})$. The result is now immediate since $|B_2| = (1 + o(1)) |B_1|$.
    \end{proof}

    \begin{claim}
        \label{claim:still-small-prob}
        For all $i \in [\ell]$,
        \[ \prob{Y_j' = k_i}  \leq \prob{Y_j' = k_i | Y_j' \not \in \{k_1, \dots, k_{i-1}\}} \leq 2\prob{Y_j' = k_i}.\]
    \end{claim}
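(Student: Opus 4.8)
The plan is to exploit that, conditionally on the information revealed so far, $Y_j'$ is a binomial random variable shifted by a fixed amount, and that every target value $k_1,\dots,k_{i-1}$ sits strictly above its median; deleting from the sample space a set of values that all lie above the median can cost at most a factor of two. Concretely, recall from the construction that for $j \in B_2$ we may write $Y_j' = (Y_j' - \eps_j') + \eps_j'$, where, given the edges already revealed, $\eps_j'$ is a fixed nonnegative integer with $\eps_j' \le \eps$ and $Y_j' - \eps_j'$ is a $\Bin(|\Gamma_2(x)\setminus A|,p)$ random variable, so that it has mean $\mu' = |\Gamma_2(x)\setminus A|\,p$. Also note that the target values are strictly increasing, $k_1 < k_2 < \dots < k_\ell$, since $k_i = \max\{\ceil{\mu},\ceil{\mu'}\} + \eps + i$.

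The left-hand inequality is immediate: since $k_i \notin \{k_1,\dots,k_{i-1}\}$, the event $\{Y_j' = k_i\}$ is contained in $\{Y_j' \notin \{k_1,\dots,k_{i-1}\}\}$, so
\[\cprob{Y_j' = k_i}{Y_j' \notin \{k_1,\dots,k_{i-1}\}} = \frac{\prob{Y_j' = k_i}}{\prob{Y_j' \notin \{k_1,\dots,k_{i-1}\}}} \ge \prob{Y_j' = k_i}.\]
For the right-hand inequality it then suffices to show $\prob{Y_j' \notin \{k_1,\dots,k_{i-1}\}} \ge \tfrac12$. For every $i' < i$ we have $k_{i'} \ge \ceil{\mu'} + \eps + 1 > \ceil{\mu'} + \eps_j'$, so $\{Y_j' \in \{k_1,\dots,k_{i-1}\}\} \subseteq \{Y_j' - \eps_j' > \ceil{\mu'}\}$, and since $Y_j' - \eps_j' \sim \Bin(|\Gamma_2(x)\setminus A|,p)$ has mean $\mu'$, \Cref{lem:median} gives $\prob{Y_j' - \eps_j' > \ceil{\mu'}} \le \tfrac12$. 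Hence $\prob{Y_j' \in \{k_1,\dots,k_{i-1}\}} \le \tfrac12$, which is exactly what is needed.

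The only point that requires care — the closest thing here to an obstacle — is the precise meaning of the conditioning. One should check that conditioning on the history of the iterative revealing process through step $i-1$ (which determines the target counts $Z_{k_1},\dots,Z_{k_{i-1}}$ coming from $B_1$, independently of $B_2$, together with, for each $j \in B_2$, which of $k_1,\dots,k_{i-1}$ the value $Y_j'$ equals, if any) amounts, for a vertex $j \in B_2$ whose value has not yet been pinned down, to conditioning solely on $\{Y_j' \notin \{k_1,\dots,k_{i-1}\}\}$; this is what lets the two displayed estimates above apply verbatim. There is no real difficulty beyond this bookkeeping.
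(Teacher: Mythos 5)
Your proposal is correct and follows essentially the same route as the paper: the conditional-probability identity together with the observation that each $k_{i'}$ lies strictly above $\ceil{\mu'}+\eps_j'$, so the median bound (Fact \ref{lem:median}) for the binomial $Y_j'-\eps_j'$ gives $\prob{Y_j' \in \{k_1,\dots,k_{i-1}\}}\le \tfrac12$ (the paper simply cites this bound, already established in the proof of Claim \ref{claim:still-many}, rather than rederiving it). Your closing remark about the conditioning matching the revealing process is exactly the bookkeeping the paper handles via the independence of the $Y_j'-\eps_j'$ over $j\in B_2$, so nothing is missing.
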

    \begin{proof}
        The claim follows immediately from  $\prob{Y_j' \in \{k_1, \dots, k_\ell\}} \leq 1/2$ and
        \[\prob{Y_j' = k_i| Y_j' \not \in \{k_1, \dots, k_{i-1}\}} = \frac{\prob{Y_j' = k_i}}{1 - \prob{Y_j' \in \{k_1, \dots, k_{i-1}\}}}.\]
    \end{proof}

    We now assume that $Z_{k_1} + \dotsb + Z_{k_\ell} \leq 3|B_2|/4$. Our goal is to apply \Cref{thm:sup} for which we need to bound the probability that $Y_j' = k_i$ given that $Y_j' \not \in \{k_1, \dots, k_{i-1}\}$. We use different approaches for different values of $p$, and we now split the proof into two parts.

    \begin{claim}
        \label{claim:local-limit}
        Suppose $p = \omega(n^{-2/3})$ and $p \leq n^{-16/35}$. There exist constants $\alpha,\beta > 0$ such that, for all $j \in B_2$ and $i \in [\sqrt{\mu'}]$, we have
        \begin{align*}
            \frac{\alpha}{\sqrt{\mu'}} \leq & \prob{Y_j' = k_i} \leq \frac{\beta}{\sqrt{\mu'}}.
        \end{align*}
    \end{claim}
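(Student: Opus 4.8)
The plan is to recognise $Y_j' - \eps_j'$ as a binomial random variable and apply the local limit theorem (Theorem \ref{thm:DML}) to pin down the point probabilities at the target values $k_i$. First I would record that, under the conditioning and the good events already assumed, for $j \in B_2$ we have $Y_j' - \eps_j' \sim \Bin(N', p)$ where $N' = |\Gamma_2(x) \setminus A|$, and that $\eps_j'$ is a fixed (already-revealed) integer with $0 \le \eps_j' \le \eps$. Crucially in this range $\eps = 3(n^2p^3)^{1/4} + 4 = O(n^{1/2}p^{3/4})$, so $\eps = o(\mu')$ since $\mu' = \Theta(p\lambda) = \Theta(n^2 p^3)$ when $p = \omega(n^{-2/3})$ (using $\lambda = \Theta(n^2p^2)$ in this sub-range, which follows from Claim \ref{claim:neighbourhoods} and $np^2 \to 0$). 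Set $\sigma' = \sqrt{N' p (1-p)}$; then $\sigma'^2 = (1+o(1))\mu' = \Theta(n^2p^3) \to \infty$, and $\sqrt{\mu'} = \Theta(np^{3/2})$.

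Next I would check that the target values lie in the central window where Theorem \ref{thm:DML} applies. We have $k_i = \max\{\ceil{\mu},\ceil{\mu'}\} + \eps + i$ with $i \le \sqrt{p\lambda} = \Theta(\sqrt{\mu'})$, so the shift of $Y_j' - \eps_j'$ away from its mean $N'p$ is $h := k_i - \eps_j' - N'p = (\mu' - N'p) + O(\eps) + O(|\mu-\mu'|) + i$. Since $\mu' - N'p = -N'p^2 = O(\mu' p) = o(\sigma')$, and $|\mu - \mu'| = O(n^{5/4}p^{9/4}) = o(\sigma')$ (here one uses $|\Gamma_2(u)| , |\Gamma_2(x)|$ differ from the common $\lambda$ by $O((np)^{5/4})$ and $|A| = O(np)$), and $\eps = o(\sigma')$, the dominant contribution to $h$ is $i = O(\sqrt{\mu'}) = O(\sigma')$. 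Hence $|h| \le \sigma'^{5/4}$ comfortably for large $n$, and $k_i - \eps_j' = N'p + h \in \mathbb{Z}$ by construction. Theorem \ref{thm:DML} then gives
\[\prob{Y_j' = k_i} = \prob{Y_j' - \eps_j' = N'p + h} = (1 + o_{\sigma'}(1)) \frac{1}{\sqrt{2\pi \sigma'^2}} \expb{-\frac{h^2}{2\sigma'^2}}.\]
Since $0 \le h^2/(2\sigma'^2) = O(i^2/\mu') = O(1)$ (as $i \le \sqrt{p\lambda} \le \sqrt{\mu'}(1+o(1))$), the exponential factor is bounded above by $1$ and below by a positive absolute constant, and $\sqrt{2\pi\sigma'^2} = \Theta(\sqrt{\mu'})$. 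This yields constants $\alpha, \beta > 0$ with $\alpha/\sqrt{\mu'} \le \prob{Y_j' = k_i} \le \beta/\sqrt{\mu'}$, as claimed; the $o_{\sigma'}(1)$ error is absorbed into the constants for large $n$.

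The main obstacle I anticipate is bookkeeping the various error terms — $\eps_j'$, the discrepancy $\mu - \mu'$, and the fluctuation of $|\Gamma_2(x)|$ around $\lambda$ — to confirm they are all $o(\sigma') = o(np^{3/2})$ uniformly over $i \le \sqrt{p\lambda}$, so that $h$ genuinely stays in the local-limit window and $h^2/\sigma'^2$ stays bounded. The key numerical input is that in this sub-range $\lambda = \Theta(n^2p^2)$ (not $\Theta(n)$), which holds precisely because $p = \omega(n^{-2/3})$ and $p \le n^{-16/35}$ force $np^2 = o(1)$; this is what makes $\sqrt{p\lambda} = \Theta(np^{3/2})$ the right scale and keeps $\eps = O(n^{1/2}p^{3/4})$ negligible against it. One should also note the lower bound $\prob{Y_j' = k_i} \ge \alpha/\sqrt{\mu'}$ is stated with $\mu'$, but since $\mu = \mu'(1+o(1))$ one may freely replace $\mu'$ by $\mu$ or by $p\lambda$ up to adjusting constants, which is convenient when combining with Claim \ref{claim:still-small-prob} and Theorem \ref{thm:sup} in the next stage of the argument.
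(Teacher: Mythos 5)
Your overall route is the same as the paper's: view $Y_j'-\eps_j'$ as a binomial random variable, apply the local limit theorem (Theorem \ref{thm:DML}), and check that $k_i-\eps_j'$ stays within $O(\sqrt{\mu'})$ of the mean so that the Gaussian factor is bounded above and below by constants. However, there is a genuine error in your scale analysis. You assert that $p=\omega(n^{-2/3})$ and $p\le n^{-16/35}$ \emph{force} $np^2=o(1)$, and you use this to conclude $\lambda=\Theta(n^2p^2)$, $\mu'=\Theta(n^2p^3)$ and $\sigma'=\Theta(np^{3/2})$ throughout, calling this ``the key numerical input''. This is false: for $n^{-1/2}\ll p\le n^{-16/35}$ we have $np^2\ge n^{3/35}\to\infty$, in which case $1-(1-p)^{d}\to 1$, so $\lambda=\Theta(n)$, $\mu'=\Theta(np)$ and $\sigma'=\Theta(\sqrt{np})$, which is \emph{smaller} than your claimed $np^{3/2}$ by a factor $\sqrt{np^2}\to\infty$. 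Consequently your verifications that $\eps=o(\sigma')$ and $n^{5/4}p^{9/4}=o(\sigma')$ are carried out against the wrong (too large) normalisation and are not valid on part of the claimed range, which the statement must cover since the claim is invoked for all $p=\omega(n^{-2/3})$ up to $n^{-16/35}$.

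The gap is fixable, and the paper's proof shows how: instead of a single asymptotic for $\lambda$, it uses the two-sided bound $\lambda\ge\tfrac n2\min\{1-e^{-1},\,np^2/4\}$ from Claim \ref{claim:neighbourhoods}, giving $\sqrt{\mu'}\ge\min\{a\sqrt{np},\,b\sqrt{n^2p^3}\}$, and then checks both regimes: $n^{5/4}p^{9/4}\le\sqrt{np}$ because $p\le n^{-16/35}\le n^{-3/7}$, and $n^{5/4}p^{9/4}\le\sqrt{n^2p^3}$ for $p\le n^{-1/3}$; similarly $(n^2p^3)^{1/4}\le\sqrt{np}$ trivially, so $\eps=O(\sqrt{\mu'})$ in both regimes. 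With those checks in place your argument goes through (the term $i\le\sqrt{p\lambda}=O(\sqrt{\mu'})$ is fine in either regime since $p\lambda=(1+o(1))\mu'$). Two further small points: $\mu'$ is defined as $|\Gamma_2(x)\setminus A|p$, so your term $\mu'-N'p$ is identically zero (the ``$-N'p^2$'' correction belongs to the variance $\sigma'^2=\mu'(1-p)$, not the mean), and the condition you actually need is $\eps=O(\sqrt{\mu'})$, not merely $\eps=o(\mu')$.
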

    \begin{proof}
        Note that $\prob{Y_j' = k_i} = \prob{Y_j' - \eps_j' = k_i - \eps_j'}$ and that $Y_j' - \eps_j'$ is a binomial random variable whose variance tends to infinity. By \Cref{thm:DML} it is enough to show that there is a constant $M$ such that $|k_i - \eps'_j - \mu' | \leq M \sqrt{\mu'}$ for all $j \in B_2$ and $k_i$. We have that
        \begin{align*}
            \left|k_i - \eps'_j - \mu' \right| & \leq |\ceil{\mu'} - \mu'| + |\eps_j'| + i \\
            & \leq  1 + \eps + \sqrt{\mu'},
        \end{align*}
        so we only need to show that $1 + \eps = O(\sqrt{\mu'})$. 
        
        As seen in the proof of \Cref{claim:neighbourhoods}, we have $\lambda \geq \tfrac{n}{2} \min \{ 1 - e^{-1}, np^2/4\}$ for large enough $n$.
        In particular, there are constants $a$ and $b$ such that $\sqrt{\mu'} \geq \min\{a\sqrt{np}, b \sqrt{n^2p^3}\}$ for large enough $n$. 
        This implies that $\sqrt{\mu'} = \omega(1)$, and it is easy to check that $(n^{2}p^{3})^{1/4} = O(\sqrt{\mu'})$ as well.
    \end{proof}

    Suppose we are at stage $i$, and so we have already revealed the vertices with degrees $k_1, \dots, k_{i-1}$ and are interested in the event that $|Z_{k_i} - Z_{k_i}'| \leq m + 4$. Since we have already revealed $Z_{k_i}$, it suffices to bound the probability that $Z_{k_i}'$ takes one of the $2m + 9$ most likely values. The random variable $Z_{k_i}'$ is the sum of independent Bernoulli random variables, and we may apply \Cref{thm:sup}. By \Cref{claim:still-many} there are at least $|B_2|/4$ trials and by \Cref{claim:still-small-prob} the success probability of each trial is at least $\alpha/\sqrt{\mu'}$ and at most $2\beta/\sqrt{\mu'}$. Since $\mu' \to \infty$ as $n \to \infty$, we may assume $2\beta/\sqrt{\mu'} < 1/2$. In particular, each unrevealed $j \in B_2$ is equal to $k_i$ with probability less than $1/2$. Applying \Cref{thm:sup} we have

    \[\sup_x \bP\left(Z_{k_i}' = x\right) \leq C\left({\frac{\alpha|B_2|}{4 \sqrt{\mu'}}}\right)^{-1/2} = O(p^{1/4}),\]
    and
    \[\prob{|Z_{k_i} - Z_{k_i}'| \leq m+4} = O\left(m p^{1/4}\right).\]
    Since $p \leq n^{-16/35}$ and $m \leq 12n^{1/9}$, we have $mp^{1/4} = O(n^{-1/315})$. The only condition on $\ell$ in this argument comes from the application of \Cref{claim:local-limit} where we required that $\ell \leq \sqrt{\mu'}$. Since $\sqrt{\mu'} = \omega(1)$, we may take $\ell > 1260$ to be a constant, in which case the probability that all $\ell$ steps succeed is $O(n^{-\ell/315}) = o(n^{-4}$) as required.

    \bigskip
    We now consider the case where $n^{-2/3 - \delta} \leq p \leq n^{-2/3} \log \log n$. Instead of applying a local limit theorem as in \Cref{claim:local-limit}, we approximate $Y_j' - \eps_j'$ by a Poisson random variable and use this to bound the probability that $Y_j' - \eps_j'$ equals $k_i$.

    \begin{claim}
        \label{claim:le-cam}
        Suppose $n^{-2/3 - \delta} \leq p \leq n^{-2/3} \log \log n$. Then, for all $i > 0$, we have
        \[ \frac{(\mu')^{k_i - \eps} \exp(-\mu')}{(k_i - \eps)!} + O\left(n^2p^4\right) \leq \prob{Y_j' = k_i} \leq  1/5 + O\left(n^2p^4\right).\]
    \end{claim}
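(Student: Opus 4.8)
The plan is to approximate the (conditionally) binomial random variable $Y_j' - \eps_j'$ by a Poisson law using Le Cam's theorem (\Cref{thm:le-cam}), and then read off both inequalities from elementary estimates on the Poisson mass function. Recall that $\eps_j'$ has already been revealed, so conditionally it is a fixed non-negative integer with $0 \le \eps_j' \le \eps$, while $Y_j' - \eps_j' = \sum_{w \in \Gamma_2(x) \setminus A} X_{j,w}$ is, conditionally on everything revealed, a binomial random variable with $N' := |\Gamma_2(x) \setminus A|$ trials of probability $p$ and mean $\mu' = N'p$. By \Cref{claim:neighbourhoods} we may assume $|\Gamma_2(x)| \le \lambda + (np)^{5/4}$ with $\lambda \le 2n^2p^2$, and since $|A| = O(np)$ this yields $N' = O(n^2p^2)$, hence $\mu' = O(n^2p^3)$ and $\sum_i p_i^2 = N'p^2 = \mu' p = O(n^2p^4)$ in the notation of \Cref{thm:le-cam}.

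Applying \Cref{thm:le-cam} to $Y_j' - \eps_j'$, for every integer $t \ge 0$ we obtain
\[\left| \prob{Y_j' - \eps_j' = t} - \frac{(\mu')^t e^{-\mu'}}{t!} \right| \le 2 \min\{1, 1/\mu'\}\, N'p^2 = O(n^2p^4)\]
with an absolute implied constant. Since $\prob{Y_j' = k_i} = \prob{Y_j' - \eps_j' = k_i - \eps_j'}$, it remains only to estimate the Poisson$(\mu')$ mass at $t = k_i - \eps_j'$. As $k_i = \max\{\ceil\mu, \ceil{\mu'}\} + \eps + i$ with $i \ge 1$ and $0 \le \eps_j' \le \eps$, we have $\ceil{\mu'} + 1 \le k_i - \eps \le k_i - \eps_j' \le k_i$, so the argument lies strictly above $\mu'$, where the Poisson mass function is decreasing.

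For the upper bound, the Poisson$(\mu')$ mass at any integer $\ge \ceil{\mu'} + 1$ is at most its value at $\ceil{\mu'}+1$, and a short one-variable calculation gives $\frac{(\mu')^{\ceil{\mu'}+1} e^{-\mu'}}{(\ceil{\mu'}+1)!} \le e^{-1}/2 < 1/5$ for every $\mu' > 0$ (the supremum over $\mu'$ being approached as $\mu' \to 1$). Together with the Le Cam estimate this gives $\prob{Y_j' = k_i} \le 1/5 + O(n^2p^4)$. The lower bound follows in the same way: the Le Cam estimate gives $\prob{Y_j' = k_i} \ge \frac{(\mu')^{k_i - \eps_j'} e^{-\mu'}}{(k_i - \eps_j')!} - O(n^2p^4)$, and the deterministic shift $\eps_j' \le \eps$ is then accounted for using monotonicity of the Poisson mass function on $[\mu', \infty)$ to obtain a bound of the claimed shape.

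For this claim the argument is short and there is no real obstacle; the one point needing attention is that $\mu'$ varies from $o(1)$ (for $p$ near $n^{-2/3-\delta}$) up to $\omega(1)$ (for $p$ near $n^{-2/3}\log\log n$), so one keeps the $\min\{1, 1/\mu'\}$ factor in Le Cam's bound and uses the estimate $e^{-1}/2 < 1/5$, which is uniform in $\mu'$, in order to treat the whole regime at once. The harder work lies downstream: converting this per-vertex bound into anti-concentration for $Z_{k_i}'$ via Rogozin's theorem (\Cref{thm:sup}) and iterating over several target values $k_i$ to force the failure probability below $n^{-4}$.
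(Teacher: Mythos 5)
Your proposal follows essentially the same route as the paper's own proof: condition on the revealed value $\eps_j'$, apply Le Cam's theorem to the binomial variable $Y_j'-\eps_j'$ with mean $\mu' = |\Gamma_2(x)\setminus A|\,p$ so the approximation error is $O(\mu' p)=O(n^2p^4)$, and then use that the Poisson mass function is decreasing above its mean, together with the elementary bound $\sup_{\mu'>0} (\mu')^{\ceil{\mu'}+1}e^{-\mu'}/(\ceil{\mu'}+1)! = e^{-1}/2 < 1/5$ for the upper bound (your explicit evaluation of this supremum is a detail the paper leaves implicit, and it checks out).

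One caveat, which you inherit verbatim from the paper rather than introduce yourself: in the lower bound, monotonicity runs the other way. Writing $f(t)=(\mu')^t e^{-\mu'}/t!$, the inequality $\eps_j'\le\eps$ gives $k_i-\eps_j'\ge k_i-\eps$, and since $f$ is decreasing on $[\mu',\infty)$ this yields $f(k_i-\eps_j')\le f(k_i-\eps)$ --- exactly what the upper bound needs, but the reverse of what the stated lower bound requires. The honest lower bound produced by this argument is $f(k_i)$ (the worst case $\eps_j'=0$), and when $\mu'=o(1)$ the gap between $f(k_i-\eps)$ and $f(k_i)$ is of much larger order than $n^2p^4$, so it cannot be hidden in the error term; as literally stated the lower bound can fail for a vertex $j$ with $\eps_j'=0$. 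This does not damage the overall argument: the downstream application of Rogozin's theorem only needs a lower bound of the form $n^{-O(\delta(i+1))}$ divided by a factorial of polyloglog size, and replacing $k_i-\eps$ by $k_i$ costs only a factor $(\mu')^{\eps}/\bigl((k_i-\eps+1)\cdots k_i\bigr) \ge n^{-O(\delta)-o(1)}$, which is absorbed by taking $\delta$ smaller. So your write-up matches the paper's proof, including this one imprecise step, which would be better stated (in both) with $k_i$ in place of $k_i-\eps$.
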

    \begin{proof}
        By Le Cam's Theorem (\Cref{thm:le-cam}), the total variation distance between $Y_j' - \eps_j'$ and a Poisson random variable with mean $\mu'$ is at most $2p\mu' = O(n^2 p^4)$. Hence,
        \[\prob{Y_j'  = k_i } = \prob{Y_j' - \eps_j' = k_i - \eps_j'} = \frac{(\mu')^{k_i - \eps_j'}  \exp(-\mu')}{(k_i - \eps_j')!} + O\left(n^2p^4\right).\]
        The probability mass function of a Poisson distribution is decreasing above its mean, and so the right hand side is a decreasing function of $k_i - \eps_j'$. The lower bound now follows since $\eps_j' \leq \eps$. For the upper bound, note that $k_i - \eps_j' \geq \ceil{\mu'} + 1$, and it suffices to bound
        \[\frac{t^{\ceil{t + 1}} \exp(-t)}{\ceil{t + 1}!}\]
        over all values of $t > 0$.
        This is bounded above by $1/5$.
    \end{proof}

    The random variable $Z_{k_i}$ is the sum of at least $|B_2|/4$ independent Bernoulli random variables, each with probability at least ${(\mu')^{k_i - \eps} \exp(-\mu')}/{(k_i - \eps)!} + O\left(n^2p^4\right)$ and at most $2/5 + O\left(n^2p^4\right)$. Hence, by \Cref{thm:sup},
    \begin{equation}\label{eqn:zki}
        \sup_t \bP\left(Z_{k_i}' = t\right) \leq C\left(\frac{|B_2|}{4} \cdot \frac{(\mu')^{k_i - \eps} \exp(-\mu')}{(k_i - \eps)!} + O\left(n^3p^5\right)\right)^{-1/2}.
    \end{equation}

    Note that $t^t\exp(-t)$ is bounded below by $1/e$ and that $\mu' = p \lambda + O(n^{5/4}p^{9/4})$. Since $\lambda \leq 2n^2p^2$, we may assume $\mu' \leq 3 (\log \log n)^3$ for large enough $n$. We also have that $\mu' \geq \gamma n^2 p^3 \geq \gamma n^{-3\delta}$ for some small $\gamma > 0$ and large enough $n$. Hence, for large enough $n$,
    \begin{align*}
        \frac{|B_2|}{4} \cdot \frac{(\mu')^{k_i - \eps} \exp(-\mu')}{(k_i - \eps)!} & = \frac{|B_2|}{4} \cdot (\mu')^{\mu'}\exp(-\mu') \cdot \frac{(\mu')^{i + \ceil{\mu'} - \mu'}}{(\ceil{\mu'} + i)!} \\
             & \geq \frac{|B_2|}{4e}  \cdot \frac{ \gamma^{i+1} n^{-3\delta (i + 1)}}{\left( 3 (\log \log n)^3 + \ell + 1\right)!}.            \\
    \end{align*}
    For any fixed $\ell$ and $\delta$, the quantity $( 3 (\log \log n)^3 + \ell + 1)!$ is less than $n^{3\delta}$ for large $n$. We also have that $|B_2| \geq np/2 - (m + 4) \geq n^{1/3 - \delta}/3$ for large $n$.  Hence,
    \[ \frac{|B_2|}{4} \cdot \frac{(\mu')^{k_i - \eps} \exp(-\mu')}{(k_i - \eps)!} \geq \frac{\gamma^{i+1} n^{1/3 - \delta - 3\delta(i + 2)}}{12e}.\]
    Substituting this bound into (\ref{eqn:zki}) gives
    \begin{align*}
        \sup_t \bP\left(Z_{k_i}' = t\right) \leq C \left(\frac{ \gamma^{i+1} n^{1/3 - \delta - 3\delta (i + 2)}}{12e} + O(n^3p^5)\right)^{-1/2}.
    \end{align*}
    Hence, the probability that all $\ell$ steps complete is at most
    \[\prod_{i=1}^{\ell}(2m + 9)C \left(\frac{\gamma^{i+1} n^{1/3 - \delta - 3\delta (i + 2)}}{12e} + O(n^3p^5)\right)^{-1/2} = O \left( n^{ - (\ell/6 - 7\ell \delta/2 - 3 \delta \ell (\ell + 1)/4)}\right).\]
    For any $\ell > 24$, one can choose $\delta$ sufficiently small such that \[\ell/6 - 7\ell \delta/2 - 3 \delta \ell (\ell + 1)/4 > 4\]
    which completes the proof.
\end{proof}

\section{Non-reconstructibility from 1-neighbourhoods and 2-neighbourhoods}\label{sec:12non}
In this section we prove \Cref{thm:r=2-non} and \Cref{thm:r=1}. The proofs are quite similar, but differ in the technical details. We start in \Cref{sec:1non} with the proof of \Cref{thm:r=1} since it is slightly simpler, and then we move on to the proof of \Cref{thm:r=2-non} in \Cref{sec:2non}.

\subsection{1-neighbourhoods}\label{sec:1non}
In this subsection we prove \Cref{thm:r=1}. When $p=O\left(\frac {\log n}n\right)$ and $p=\omega(n^{-3/2})$, we can appeal directly to \Cref{thm:firstPhaseTransition}. It is therefore sufficient to show that if $p \leq \sqrt {\frac{\log n}{25n}}$ and $p = \omega(n^{-1})$, a random graph $G\in\mathcal G(n,p)$ is not 1-reconstructible with high probability.

\begin{proof}
    Suppose that $p = \omega(n^{-1})$ and $p \leq c \sqrt {\frac{\log n}{n}}$ for some small constant $c > 0$ (which we will later take to be $1/5$).
    We will show that with high probability, there exist four vertices $u,v,x,y \in V(G)$ such that
    \begin{enumerate}
        \item the pairs $xy,uv\in E(G)$, and $xu,xv,yu,yv\notin E(G)$,
        \item all the degrees $d(u),d(v),d(x),d(y)$ are different,
        \item the degrees $d(u),d(v),d(x),d(y)$ are at most $(np)^{2/3}$ from $np$, and
        \item the neighbourhoods $\Gamma(u),\Gamma(v),\Gamma(x)$ and $\Gamma(y)$ are all pairwise disjoint.
    \end{enumerate}
    It is straightforward to see that this implies that the graph $G$ is not reconstructible from its 1-neighbourhoods. Indeed, the graphs $G$ and $G'=\left(G\setminus\{xy,uv\}\right)\cup\{xu,yv\}$ have the same collection of 1-neighbourhoods, but they are not isomorphic as there is one fewer edge between vertices of degree $d(x)$ and $d(y)$ in $G'$ than in $G$.

    It thus remains to prove that there exist four such vertices with high probability.
    Let $A=(a_1,a_2,a_3,a_4)\subseteq V(G)$ be an ordered tuple of four vertices, and let $X_A$ be the indicator of the event that the vertices of $A$ satisfy the conditions above with $a_1=u, a_2=v, a_3=x$ and $a_4=y$. Let $X=\sum_{A\subseteq V}X_A$ be the total number of such `good' tuples. Then $\expec{X}=\sum_{A\subseteq V}\E[X_A]=4!\binom n4\prob{X_{(1,2,3,4)}=1}$.
    Let $R_1,R_2,R_3$ and $R_4$ be the events that $(1,2,3,4)$ satisfies the conditions 1, 2, 3 and 4 respectively. The probability of the event $R_1$ is simply $p^2(1-p)^4$. Given that $R_1$ occurs, the degree of a vertex in $A$ is distributed like a $\Bin(n-4,p)$ random variable plus one. The degrees are independent so the probability that two of the vertices have the same degree is at most 6 times the probability that two $\Bin(n-4, p)$ random variables are equal, and this is $o(1)$ by \Cref{thm:sup}. Further, an application of \Cref{lem:chernoff} shows that $\prob{R_3^c \mid R_1}=o(1)$, and hence, $\prob{R_2 \cap R_3 \mid R_1} = 1 - o(1)$.

    We now consider $R_4$.  Given $n'$ and $a$ with $|n'-n|\le 8$ and $|a-np|\le(np)^{2/3} + 8$, the probability that four uniformly chosen sets from $[n']$ of size $a$ are pairwise disjoint is
    \begin{align}
        \frac{\binom{n'}a \binom{n'-a}a \binom {n'-2a}{a} \binom{n'-3a}a}{\binom{n'}a ^4}= (1-o(1))e^{-6a^2/n} = (1-o(1))e^{-6np^2}. \label{swiss}
    \end{align}
    The first equality follows from rewriting the left hand side as $\frac {(n')!}{(n'-4a)!}\cdot \left(\frac{(n'-a)!}{(n')!} \right)^4$ and using Stirling's approximation.
    Given $R_1$, $R_2$ and $R_3$ the probability that $R_4$ occurs can be bounded above by the probability that four uniformly chosen sets from $[n-4]$
    of size $\ceil{np - (np)^{2/3}}$ are pairwise disjoint, and bounded below by the probability that four uniformly chosen sets from $[n-4]$ of size $\floor{np + (np)^{2/3}}$ are pairwise disjoint. By \eqref{swiss} both probabilities are $(1-o(1))e^{-6np^2}.$

    Combining the above we have $\prob{X_{A}} = (1-o(1))p^2 \expb{-6np^2}$, and so
    \begin{align}
        \E[X]=(1+o(1))n^4p^2 \expb{-6np^2} = \Omega(n^{2-6c^2}). \label{expstride}
    \end{align}

    We next show that $\expec{X^2} \leq (1 + o(1)) \expec{X}^2$, so that $\Var(X) = o(\expec{X}^2)$ and Chebyshev's inequality completes the proof. Write
    \begin{align*}
        \E[X^2] & =\sum_{A_1,A_2}\E[X_{A_1}X_{A_2}]
        =\sum_{k=0}^{4}\sum_{\substack{A_1,A_2     \\|A_1\cap A_2|=k}}\prob{(X_{A_1}=1) \wedge (X_{A_2}=1)}.
    \end{align*}

    We first consider when $A_1$ and $A_2$ intersect (with $|A_1\cup A_2|=8-k$). If both $A_1$ and $A_2$ satisfy condition 1, then there are at least $4-k/2$ edges which must each be present. This happens with probability at most $p^{4-k/2}$. Hence, summing over the at most $n^{8-k}$ choices for $A_1$ and $A_2$ for each $k$ and noting that $n^2p = \omega(1)$, we have
    \begin{align*}
        \sum_{k=1}^{4}\sum_{\substack{A_1,A_2 \\|A_1\cap A_2|=k}}\prob{(X_{A_1}=1) \wedge (X_{A_2}=1)}&\leq \sum_{k=1}^{4} n^{8-k}p^{4 - k/2}
        \leq 4 n^7p^{7/2}.
    \end{align*}
    Considering \eqref{expstride}, we see that for small enough $c$ this sum is $o(\E[X]^2)$.
    Indeed, $n^7p^{7/2} = O(n^{15/2}p^4)$ while $\expec{X}^2 = \Omega(n^{8-12c^2}p^4)$, and it suffices to take $c = 1/5$.
    It therefore suffices to show that the sum over the choices of $A_1$ and $A_2$ with no intersection contributes at most $(1+o(1))\E[X]^2$.

    Now suppose that there is no intersection between $A_1$ and $A_2$. We loosen the requirements given by 1, 2, 3 and 4, by ignoring the edges between $A_1$ and $A_2$, and ignoring condition 2. Condition 1 is unchanged, and condition 4 is weaker as we allow the neighbourhoods to intersect in $A_1$ and $A_2$. We modify condition 3 so that the degree of each vertex is at most $(np)^{2/3} + 4$ away from $np$ ignoring any edges between $A_1$ and $A_2$, and note that this has a negligible difference on the probability. Let $X'_{A_1, A_2}$ be the indicator of the event that both $A_1$ and $A_2$ pass these conditions which, since we have weakened the conditions, dominates the event that $X_{A_1} = 1$ and $X_{A_2} = 1$. Repeating the calculation from before shows that $\prob{X'_{A_1, A_2} = 1} = (1+o(1))\prob{X_{(1,2,3,4)} = 1}^2$.
    It then follows that $\sum_{A_1\subseteq V}\sum_{A_2\subseteq V\setminus A_1}\prob{(X_{A_1}=1)\wedge (X_{A_2}=1)}\leq \left(\sum_{A\subseteq V}(1+o(1))\prob{X_{A}=1}\right)^2=(1+o(1))\E[X]^2$, as required.
\end{proof}

\subsection{2-neighbourhoods}\label{sec:2non}

In this subsection we prove \Cref{thm:r=2-non}. When $p=O\left(\frac{\log n}n\right)$ and $p=\omega(n^{-5/4})$, we can appeal directly to \Cref{thm:firstPhaseTransition}, so it suffices to consider $p$ where $p \leq \frac{1}{3} \left(\frac {\log^{1/3}  n}{n}\right)^{3/4}$ and $p = \omega(n^{-1}\log \log n)$. We will show that for such $p$ a random graph $G\in\mathcal G(n,p)$ is not 2-reconstructible with high probability.

\begin{proof}[Proof of \Cref{thm:r=2-non}]
    Suppose that $p = \omega(n^{-1}\log \log n)$ and $p \leq c \left(\frac {\log^{1/3}  n}{n}\right)^{3/4}$ for some small constant $c > 0$ (which we will later take to be $1/3$).
    For 2 vertices $i\sim j$, define the `one-sided 2-neighbourhood' of $i$ with respect to $ij$ to be $N_2^{ij}(i)=(\Gamma_1(i)\setminus\{j\})\cup(\Gamma_2(i)\setminus \Gamma_1(j))$.
    We will show that with high probability, there exist four vertices $u,v,x,y \in V(G)$ such that
    \begin{enumerate}
        \item the pairs $xy,uv\in E(G)$, and $xu,xv,yu,yv\notin E(G)$,
        \item $d(x)=d(v)$ and $d(y)=d(u)$,
        \item the degrees $d(u),d(v),d(x),d(y)$ are at most $(np)^{2/3}$ from $np$,
        \item the sizes of the second neighbourhoods $|\Gamma_2(x)|$, $|\Gamma_2(y)|$, $|\Gamma_2(u)|$, $|\Gamma_2(v)|$ are all different,
        \item the sizes of the second neighbourhoods $|\Gamma_2(x)|$, $|\Gamma_2(y)|$, $|\Gamma_2(u)|$, $|\Gamma_2(v)|$ are at most $(n^2p^2)^{2/3}$ from $n^2p^2$,
        \item the  graphs induced by the first neighbourhoods are all empty (i.e. $G[\Gamma(x)]$, $G[\Gamma(y)]$, $G[\Gamma(u)]$, $G[\Gamma(v)]$ contain no edges), and
        \item the one-sided 2-neighbourhoods $N_2^{xy}(x),N_2^{xy}(y),N_2^{uv}(v)$, $N_2^{uv}(u)$ are disjoint.
    \end{enumerate}
    It is straightforward to see that this implies that the graph $G$ is not reconstructible from its 2-neighbourhoods. Indeed, conditions 1, 2, 6 and 7 ensure the graphs $G$ and $G'=\left(G\setminus\{xy,uv\}\right)\cup\{xu,yv\}$ have the same collection of 2-neighbourhoods, but the number of edges $ij$ where $|\Gamma_2(i)| = |\Gamma_2(x)|$ and $|\Gamma_2(j)| = |\Gamma_2(y)|$ (or the other way round) is one less in $G'$.

    It thus remains to prove that there exist four such vertices with high probability.
    Let $A=(a_1,a_2,a_3,a_4)\subseteq V(G)$, and let $X_A$ be the event that the vertices of $A$ satisfy the conditions above with $a_1=u, a_2=v, a_3=x, a_4=y$. Let $X=\sum_{A\subseteq V}X_A$ be the total number of such `good' tuples. Then $\expec{X}=\sum_{A\subseteq V}\E[X_A]=4!\binom n4\prob{X_{(1,2,3,4)}=1}$.
    For $i\in[7]$, let $R_i$ be the event that $(1,2,3,4)$ satisfies the condition $i$ above. The probability of the event $R_1$ is simply $p^2(1-p)^4$.
    Further, an application of \Cref{lem:chernoff} gives $\prob{R_3^c \mid R_1}=o(1)$.
    Given that $R_1$ occurs, the degree of a vertex
    in $A$ is distributed like a $\Bin(n - 4, p)$ random variable plus one. Given $R_1$, the degrees $d(u)$, $d(v)$, $d(x)$ and $d(y)$ are all independent so, since $(1-p)pn = \omega(1)$, an application of \Cref{thm:DML} shows that $\prob{R_2\mid R_1}=\Theta(\frac {1}{np} )$.

    Now reveal the edges between $u$, $v$, $x$ and $y$ and the degrees $d(u), d(v), d(x)$ and $d(y)$, and assume that $R_1$, $R_2$ and $R_3$ hold.
    Given $n'$ and $a'$ with $|n'-n|\le 8$ and $|a'-np|\le(np)^{2/3}$, the probability that four uniformly chosen sets from $[n']$ of size $a'$ are pairwise disjoint is
    \begin{align}
        \frac{\binom{n'}{a'} \binom{n'-a'}{a'} \binom {n'-2a'}{a'} \binom{n'-3a'}{a'}}{\binom{n'}{a'} ^4}= (1-o(1))e^{-6a'^2/n} = (1-o(1))e^{-6np^2}=1-o(1).  \label{swiss2.1}
    \end{align}
    Given that conditions $R_1$, $R_2$ and $R_3$ hold, the probability that $\Gamma(x),\Gamma(y),\Gamma(u),\Gamma(v)$ are disjoint can be bounded above by the probability that four uniformly chosen sets from $[n]$ of size $\ceil{np - (np)^{2/3}}$ are pairwise disjoint, and bounded below by the probability that four uniformly chosen sets from $[n-4]$ of size $\floor{np + (np)^{2/3}}$ are pairwise disjoint. By \eqref{swiss2.1} this is $(1-o(1))$.
    
    Assuming that the 1-neighbourhoods are disjoint (and $R_1, R_2$ and $R_3$ hold), $|\Gamma_2(x)|$ is distributed like a $\Bin(n - d(x) - d(y), 1 - (1-p)^{d(x) - 1})$ random variable plus $d(y) - 1$. Hence, by \Cref{thm:sup}, the probability that $|\Gamma_2(x)| = |\Gamma_2(y)|$ is $O(\frac {1}{np})$, and it follows that the probability of $R_4$ is $1 - o(1)$. Applying \Cref{lem:chernoff} also shows that the probability that $R_5$ holds is $1 - o(1)$.

    We are left with $R_6$ and $R_7$. For them to hold, we first consider the probability that $G[\Gamma(x)],G[\Gamma(y)],G[\Gamma(u)],G[\Gamma(v)]$ are all empty, and then the probability that the second neighbourhoods are disjoint, and also disjoint from the first neighbourhoods.
    We have already conditioned on the event that the first neighbourhoods are all disjoint and the probability that they are all empty (given that they are disjoint, and given $R_1,R_2,R_3,R_4,R_5$) is bounded from below by
    $1-4\bP(\Bin(\binom{\hat d}2,p)>0)$, where $\hat d= \floor{np+(np)^{2/3}}$. Since $\E[\Bin(\binom{\hat d}2,p)]=o(1)$ for our range of $p$, we obtain that the conditioned probability is $(1-o(1))$ by applying Markov's inequality.
    Finally, to complete $R_7$, note again that the probability that four uniformly chosen sets of size $a = n^2p^2 + O((n^2p^2)^{2/3})$ chosen from $[n']$ where $|n'-n|=O(np)$ are pairwise disjoint is
    \begin{align}
        \frac{\binom{n'}a \binom{n'-a}a \binom {n'-2a}{a} \binom{n'-3a}a}{\binom{n'}a ^4}= (1-o(1))e^{-6a^2/n} = (1-o(1))e^{-6n^3p^4}. \label{swiss2}
    \end{align}
    Given $R_1,R_2,R_3,R_4,R_5$ and that the first neighbourhoods are disjoint and empty, the probability that the one-sided second neighbourhoods are disjoint can be bounded above by the probability that four uniformly chosen sets from $[n]$ of size $\ceil{n^2p^2 - (n^2p^2)^{2/3}}$ are pairwise disjoint, and bounded below by the probability that four uniformly chosen sets from $[n']$ of size $\floor{n^2p^2 + (n^2p^2)^{2/3}}$ are pairwise disjoint, where $n'$ is given by $n' = \ceil{n - 4 - 4np - 4(np)^{2/3}}$. By \eqref{swiss2} this is $(1-o(1))e^{-6n^3p^4}.$

    Combining the above gives that $\E[X]= \Theta\left(n^3p\exp(-6n^3p^4)\right)$.

    We next show that $\expec{X^2} \leq (1 + o(1)) \expec{X}^2$, so that $\Var(X) = o(\expec{X}^2)$ and Chebyshev's inequality completes the proof. As before,
    \begin{align*}
        \E[X^2]=\sum_{k=0}^{4}\sum_{\substack{A_1,A_2 \\|A_1\cap A_2|=k}}\prob{(X_{A_1}=1) \wedge (X_{A_2}=1)},
    \end{align*}
    and we first consider when $A_1$ and $A_2$ intersect (with $|A_1\cup A_2|=8-k$). For condition 1 to be satisfied for both $A_1$ and $A_2$, there are at least $4 - k/2$ edges which must each be present and this happens with probability at most $p^{4-k/2}$. Summing over the at most $n^{8-k}$ choices for $A_1$ and $A_2$ for each $k$ we have
    \begin{align*}
        \sum_{k=1}^{4}\sum_{\substack{A_1,A_2 \\|A_1\cap A_2|=k}}\prob{(X_{A_1}=1) \wedge (X_{A_2}=1)}
        \leq 4 n^7p^{7/2} \leq 4c^{3/2}n^{47/8} \log^{3/8} n\cdot p^2.
    \end{align*}
    We have  $\expec{X}^2 = \Omega(n^{6 - 12 c^4}p^2)$, so for $c=1/3$ the sum over the $A_1$ and $A_2$ that intersect is $o(\expec{X}^2)$.
    It therefore suffices to show that the sum over the instances of $A_1$ and $A_2$ with no intersection contributes at most $(1+o(1))\E[X]^2$.

    As in the proof of \Cref{thm:r=1}, we count the disjoint pairs of tuples $(a_1, a_2, a_3, a_4)$ and $(a_1', a_2', a_3', a_4')$ which satisfy slightly weaker conditions. Again, these make a negligible difference to the calculations above, and we find that the expected number of pairs of tuples is $(1 + O(1)) \expec{X^2}$, but we omit the details.
\end{proof}

\section{Properties of random graphs}\label{sec:claims}
The aim of this section is to prove the claims from \Cref{sec:mains}, and in doing so complete the proofs of \Cref{thm:r=3} and \Cref{thm:r>3}.

We prove several lemmas concerning the uniqueness of $r$-balls.  In \Cref{sec:Unique2} we show that for appropriate values of $p$, the  $2$-balls of a random graph  $G\in \mathcal G(n,p)$ are typically unique, proving \Cref{clauniq2}. Then, in \Cref{sec:Unique3}  we show that the  $3$-balls of vertices of large degree are unique (again, for appropriate values of $p$), proving \Cref{clauniq3}. In \Cref{secswap} we consider when we can swap two edges, keeping the set of $3$-balls in the graph unchanged, proving \Cref{claswapr=3}, and thus completing the proof for non-reconstructibility from 3-neighbourhoods.

\subsection{Uniqueness of 2-balls}\label{sec:Unique2}
In this section, we prove \Cref{clauniq2} which gives a region for $p$ for which the $2$-balls of a random graph $G\in\mathcal G(n,p)$ are all distinct with high probability. We build on the argument of Mossel and Ross in \cite{mossel2017shotgun} and extend their result to smaller values of $p$.
In fact, we take a similar approach and we will also show that in $\mathcal G(n,p)$, with high probability, the multisets $(\left\{d(w)\right\}_{w\in \Gamma(v)})_{v \in [n]}$ are distinct.

For a vertex $v$, let us denote the multiset of the degrees of the neighbours of $v$ by $D(v) = \left\{d(w)\right\}_{w\in \Gamma(v)}$.

\begin{proof}[Proof of \Cref{clauniq2}]
    Suppose \[\zeta^2\frac{\log^2 n}{n(\log\log n)^3} \le p \le n^{-2/3-\eps}\] for some large $\zeta$ that we will fix later.
    We may impose any positive upper bound on $\eps$, and in particular, we will assume that  $\eps < 1/3$. We show that for each pair of vertices $x,y$, the event $D(x) = D(y)$ occurs with probability $o\left(n^{-2}\right)$. Taking a union bound over the $x,y$, shows that $\mathcal{G}(n,p)$ has unique $2$-neighbourhoods with high probability.

    Fix vertices $x,y$. We first reveal the set $A$ of vertices adjacent to at least one of $x$ and $y$ excluding $x$ and $y$ themselves, i.e. $A = \left(\Gamma(x) \cup \Gamma(y)\right)\setminus\{x,y\}$. So each vertex $u\in V\setminus \{x,y\}$ is in $A$  independently with probability $1-(1-p)^2$. Note that we do not yet reveal the set of edges between $\{x,y\}$ and $A$, just that each vertex in $A$ has at least one neighbour in $\{x,y\}$.

    Next we reveal the vertices in $A$ adjacent to both $x$ and $y$, and the edges inside $A$. That is, for each vertex in $A$ we connect it to both $x$ and $y$ with probability $p^2/(1-(1-p)^2)$, while each edge inside $A$ is present independently with probability $p$.

    We discount some low-probability events via the following claims.

    \begin{claim}\label{Asize}
        Let $R_1$ be the event $\{np/2 \le |A| \le 3np\}$. Then $\prob{R_1} = 1-o\left(n^{-2}\right)$.
    \end{claim}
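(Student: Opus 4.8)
\textbf{Proof proposal for Claim~\ref{Asize}.}
The plan is to observe that $|A|$ is exactly a binomial random variable and then apply the Chernoff bound from \Cref{lem:chernoff}. By the way $A$ was revealed, each vertex of $V \setminus \{x,y\}$ lies in $A$ independently with probability $1-(1-p)^2 = 2p - p^2$, so $|A| \sim \Bin\bigl(n-2,\, 2p-p^2\bigr)$ with mean $\mu := (n-2)(2p-p^2)$. Since $p \le n^{-2/3-\eps} = o(1)$ throughout the stated range, we have $\mu = (2-o(1))np$; in particular $\tfrac{19}{10}np \le \mu \le 2np$ for all large $n$.

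First I would handle the upper tail. Because $3np \ge \tfrac{3}{2}\cdot\tfrac{19}{10}np \ge \tfrac{7}{5}\mu$ for large $n$, the event $\{|A| > 3np\}$ is contained in $\{|A| \ge (1+\tfrac25)\mu\}$, and the first inequality of \Cref{lem:chernoff} bounds its probability by $\exp\!\bigl(-\tfrac{(2/5)^2\mu}{2+2/5}\bigr) = \exp(-\Theta(\mu)) = \exp(-\Theta(np))$. For the lower tail, $np/2 \le \tfrac{1}{4}\cdot 2np \cdot(1+o(1)) \le \tfrac{3}{10}\mu$ for large $n$, so $\{|A| < np/2\} \subseteq \{|A| \le (1-\tfrac{7}{10})\mu\}$, and the second inequality of \Cref{lem:chernoff} bounds its probability by $\exp\!\bigl(-\tfrac{(7/10)^2\mu}{2}\bigr) = \exp(-\Theta(np))$. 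Hence $\prob{R_1^c} \le \exp(-\Theta(np))$.

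Finally I would convert this into the claimed $o(n^{-2})$ bound using the hypothesis $p \ge \zeta^2 \tfrac{\log^2 n}{n(\log\log n)^3}$, which gives $np \ge \zeta^2 \tfrac{\log^2 n}{(\log\log n)^3}$. Therefore $\exp(-\Theta(np)) \le \exp\!\bigl(-c\zeta^2 \tfrac{\log^2 n}{(\log\log n)^3}\bigr) = n^{-c\zeta^2 \log n/(\log\log n)^3}$ for an absolute constant $c>0$, and since $\log n/(\log\log n)^3 \to \infty$ this is $o(n^{-2})$ (indeed $o(n^{-k})$ for every fixed $k$). This completes the proof of the claim.

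There is no real obstacle here: the only care needed is in choosing the deviation parameters so that the target thresholds $np/2$ and $3np$ are comfortably separated from $\mu \approx 2np$, which the constants $2/5$ and $7/10$ above achieve with room to spare. Note this argument does not even use the upper bound $p \le n^{-2/3-\eps}$ beyond the crude fact that $p=o(1)$.
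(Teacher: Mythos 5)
Your proof is correct and is essentially the same argument as the paper's: a Chernoff bound (Lemma~\ref{lem:chernoff}) applied to a binomial quantity of mean $\Theta(np)$, with the hypothesis $np \ge \zeta^2 \log^2 n/(\log\log n)^3$ turning $\exp(-\Theta(np))$ into $o(n^{-2})$. The only (cosmetic) difference is that the paper sandwiches $|A|$ via $d(x)-1 \le |A| \le d(x)+d(y)$ and applies Chernoff to the degrees, whereas you apply Chernoff directly to the exact law $|A|\sim\Bin(n-2,\,2p-p^2)$, which the paper itself records in the setup; both routes are equally valid.
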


    \begin{claim}\label{badsmatter}
        The following hold.
        \begin{enumerate}[label=(\roman*)]
            \item Let $R_3$ be the event $\{|\Gamma(x) \cap \Gamma(y)| \le 6\}$. Then $\prob{R_3}= 1- o\left(n^{-2}\right)$.
            \item Let $R_4$ be the event that there are at most $1/\eps$ edges inside $A$. Then $\prob{R_4 \mid R_1} = 1-o\left(n^{-2}\right)$.
        \end{enumerate}
    \end{claim}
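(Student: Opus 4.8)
The plan is to observe that both $R_3^c$ and $R_4^c$ are events of the form ``a binomially distributed count whose mean tends to $0$ is at least some constant'', and to control both via \Cref{lem:small-mean}. The key point is that since $p\le n^{-2/3-\eps}$ with $\eps<1/3$, the two relevant means are polynomially small in $n$, so demanding a bounded number of ``successes'' already yields a probability that is $o(n^{-2})$. Neither part uses the lower bound on $p$.

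For part (i): a vertex of $V\setminus\{x,y\}$ lies in $\Gamma(x)\cap\Gamma(y)$ exactly when it is adjacent to both $x$ and $y$, so $|\Gamma(x)\cap\Gamma(y)|\sim\Bin(n-2,p^2)$, with mean at most $np^2\le n^{-1/3-2\eps}\to 0$. Applying \Cref{lem:small-mean} with $k=7$ gives
\[\prob{R_3^c}=\prob{|\Gamma(x)\cap\Gamma(y)|\ge 7}\le e\,(np^2)^7\le e\,n^{-7/3-14\eps}=o(n^{-2}),\]
since $7/3>2$.

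For part (ii): conditional on the set $A$, the edges within $A$ are present independently with probability $p$, so their number is distributed as $\Bin\!\big(\binom{|A|}{2},p\big)$, and this conditional law depends on $A$ only through $|A|$ (note $R_1$ is itself a function of $|A|$). On $R_1$ we have $|A|\le 3np$, so $\binom{|A|}{2}p\le \tfrac92 n^2p^3\le \tfrac92 n^{-3\eps}\to 0$, using $p\le n^{-2/3-\eps}$. Choose $k=\lfloor 1/\eps\rfloor+1$, so that every occurrence of $R_4^c$ forces at least $k$ edges inside $A$ and $3\eps k>3$. Conditioning first on the exact value of $|A|$, bounding $\binom{|A|}{2}\le\binom{3np}{2}$ on $R_1$, and then applying \Cref{lem:small-mean} (whose tail bound is monotone in the number of trials), we obtain
\[\cprob{R_4^c}{R_1}\le e\Big(\tfrac92 n^2p^3\Big)^k\le e\Big(\tfrac92 n^{-3\eps}\Big)^k=o(n^{-2}),\]
as $3\eps k>3>2$ and $e(9/2)^k$ is a fixed constant.

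The argument is otherwise routine; the only step needing mild care is the conditioning in part (ii), where one should condition on the precise value of $|A|$ (which determines whether $R_1$ holds) before invoking \Cref{lem:small-mean}, so that the count of edges inside $A$ genuinely has the stated binomial distribution with a controlled number of trials.
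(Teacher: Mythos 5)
Your proof is correct and follows essentially the same route as the paper: both parts are handled by \Cref{lem:small-mean} applied to a binomial count with polynomially small mean, using only $p\le n^{-2/3-\eps}$ (the paper bounds the tail at $6$ rather than $7$ in (i), and in (ii) dominates the edge count by $\Bin(6(np)^2,p)$ instead of conditioning on $|A|$ exactly, but these are cosmetic differences). Your handling of the conditioning on $|A|$ in (ii) is if anything slightly more careful than the paper's.
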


    Note that independently each vertex in $A$ which is not adjacent to both $x$ and $y$, is connected to $x$ with probability $1/2$ and otherwise it is connected to $y$ (though we do not yet reveal the adjacencies). Next we reveal every edge which is not incident with $x$ or $y$. For all $k \in \mathbb{N}$ such that $|k-np| \le \tfrac{1}{4}\sqrt{np\log(np)}$ define $A_k$ by
    \begin{align*}
        A_k = \left\{z \in A: \left|\Gamma(z) \setminus \left(A\cup\{x,y\}\right)\right| = k\right\}.
    \end{align*}
    That is $A_k$ is the set of vertices which have $k$ neighbours in the rest of the graph. We would like to think of vertices in $A_k$ as the vertices in $A$ with degree exactly $k+1$, but this is not quite correct since there may be vertices which are connected to both $x$ and $y$ and to other vertices in $A$. We will therefore only consider a subset of the possible values for $k$, and we will make sure to choose only $k$ for which $A_k$ is definitely the vertices in $A$ of degree exactly $k + 1$. When $D(x) = D(y)$, the vertices $x$ and $y$ must have the same number of the neighbours of degree $k + 1$. If we are sure that $A_k$ is exactly the vertices in $A$ of degree $k+1$, the vertices in $A_k$ must be evenly split between being neighbours of $x$ and neighbours $y$, and this is unlikely to occur if $A_k$ is ``large".

    For each $k$, we say that $A_k$ is \emph{large} if $|A_k| \ge (np)^{1/4}$, and we say that $A_k$ is \emph{small} otherwise. We claim that most $A_k$ are large, and we will ignore the small $A_k$.

    \begin{claim}\label{negcor}
        Let $R_2$ be the event $\{\#\{\mbox{small }A_k\} \le (np)^{1/4}\}$. Then $\prob{R_2 \mid R_1} = 1-o\left(n^{-2}\right)$.
    \end{claim}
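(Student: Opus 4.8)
\textbf{Proof proposal for \Cref{negcor}.}
The plan is to work conditionally on the event $R_1$ (and on everything revealed up to the point where the $A_k$ are defined: the set $A$, the edges inside $A$, and which vertices of $A$ are joined to both $x$ and $y$) and to observe that, conditionally, the ``external degrees'' $d_z := \left|\Gamma(z)\setminus(A\cup\{x,y\})\right|$, $z\in A$, are i.i.d.\ $\Bin(n-|A|-2,p)$ random variables, since their potential edge-sets to $V\setminus(A\cup\{x,y\})$ are pairwise disjoint and as yet untouched. Hence $(|A_k|)_{k\ge 0}$ is exactly the vector of bin-counts of a multinomial experiment with $|A|$ trials. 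Writing $p_k = \prob{d_z = k}$, I would first lower-bound $p_k$ for each admissible $k$ via the local limit theorem \Cref{thm:DML}: here $\sigma^2 = (n-|A|-2)p(1-p) = (1-o(1))np \to\infty$, and $h := k-(n-|A|-2)p$ satisfies $|h| \le \tfrac14\sqrt{np\log(np)} + O(np^2) = (1+o(1))\tfrac14\sqrt{np\log(np)} \le \sigma^{5/4}$, using $\sqrt{\log(np)} = o((np)^{1/8})$ and, from $p\le n^{-2/3-\eps}$, $np^2 = o(\sqrt{np})$. Since $h^2/(2\sigma^2) \le (1+o(1))\tfrac{1}{32}\log(np)$, this gives $p_k \ge (np)^{-1/2-1/31}$ for large $n$, uniformly over $k$ in the window and over $|A|\in[np/2,3np]$, and therefore $\E\!\left[|A_k|\right] = |A|\,p_k \ge (np)^{2/5}$.

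Next, since $|A_k|\sim\Bin(|A|,p_k)$ and $(np)^{1/4}\le\tfrac12(np)^{2/5}$ for large $n$, the Chernoff bound \Cref{lem:chernoff} gives $q_k := \prob{|A_k| < (np)^{1/4}} \le e^{-\E[|A_k|]/8} \le e^{-(np)^{2/5}/8}$. As multinomial count vectors are negatively associated, for any set $S$ of admissible $k$ we have $\prob{\bigcap_{k\in S}\{|A_k|<(np)^{1/4}\}} \le \prod_{k\in S}q_k$, and a union bound over all $m$-subsets $S$, with $m := \lceil(np)^{1/4}\rceil$ and $K\le np$ the number of $k$ in the window, yields $\prob{R_2^c \mid R_1} \le \binom{K}{m}\bigl(e^{-(np)^{2/5}/8}\bigr)^m \le \bigl(np\,e^{-(np)^{2/5}/8}\bigr)^m = \exp\!\bigl(-m(\tfrac18(np)^{2/5}-\log(np))\bigr) \le \exp\!\bigl(-\tfrac{1}{16}(np)^{13/20}\bigr)$ for large $n$. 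Since $13/20>1/2$ and $np \ge \zeta^2\log^2 n/(\log\log n)^3$, the exponent $(np)^{13/20}$ is $\omega(\log n)$, so $\prob{R_2^c\mid R_1} = o(n^{-2})$, as required. (One could instead Poissonise the number of trials to make the bin-counts genuinely independent, at the negligible cost of a $\mathrm{poly}(n)$ de-Poissonisation factor, in place of invoking negative association.)

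The main obstacle is that a plain first-moment bound does not suffice: $\E[\#\{\text{small }A_k\}] = \sum_k q_k$ is only $o(1)$, not $o(n^{-2})$, when $np$ is near its smallest value $\Theta(\log^2 n/(\log\log n)^3)$, because there the per-bin failure probability $e^{-(np)^{2/5}/8}$ is merely $n^{-o(1)}$. The role of the precise threshold in the definition of $R_2$ is that one is really asking for about $(np)^{1/4}$ bins to be simultaneously small; combining this multiplicity with negative association upgrades the exponent to $(np)^{2/5+1/4}=(np)^{13/20}$, which genuinely beats $\log n$. Secondary care is needed to verify the hypotheses of \Cref{thm:DML} uniformly over the whole range of $p$ and over $|A|\in[np/2,3np]$ — in particular that the window half-width $\tfrac14\sqrt{np\log(np)}$ never exceeds $\sigma^{5/4}$ — and that $(np)^{1/4}$ stays comfortably below $\tfrac12\E[|A_k|]$ so that the Chernoff bound applies.
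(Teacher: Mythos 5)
Your proposal is correct, and its overall architecture matches the paper's proof: condition on $|A|$ (and the already revealed edges), note the external degrees are i.i.d.\ binomials, use \Cref{thm:DML} to lower-bound the probability of each degree value in the window by $(np)^{-1/2-o(1)}$, and then exploit the fact that $R_2^c$ forces about $(np)^{1/4}$ bins to be small \emph{simultaneously}, so that after a union bound over the choices of those bins the exponent becomes roughly $(np)^{1/4}\cdot(np)^{2/5}$, which beats $\log n$ — exactly the amplification the paper relies on (their exponents are $(np)^{-5/8}$ and $(np)^{5/8}$ rather than your $(np)^{-1/2-1/31}$ and $(np)^{13/20}$; both suffice). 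The one genuine divergence is how the dependence between bins is handled: you invoke negative association of multinomial counts (or, alternatively, Poissonisation) to factorise $\prob{\bigcap_{k\in S}\{|A_k|\text{ small}\}}\le\prod_{k\in S}q_k$, whereas the paper sidesteps any such external fact by merging the $k$ chosen bins into one: the size of $A_{r_1}\cup\dotsb\cup A_{r_k}$ stochastically dominates a single $\Bin(m,k(np)^{-5/8})$ variable, and ``all $k$ bins small'' forces this union to be at most $k(np)^{1/4}$, which one Chernoff bound kills before the same union bound over $S$. Your route is perfectly valid (NA of multinomial cell counts is standard, and lower-orthant events are coordinatewise nonincreasing, so the product bound follows), but it imports a lemma the paper does not need; the paper's merging trick is self-contained, using only the Chernoff bound already stated, at essentially no cost in the final exponent.
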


    Suppose $v \in A_k$. Then $v$ has degree at least $k + 1$, but it may be higher:  $v$ might be a neighbour of both $x$ and $y$ which would increase the degree by $1$ (over the minimum); there are also at most $1/\eps$ edges between vertices of $A$ with high probability, and they could all be incident to $v$, further increasing the degree by $1/\eps$. In particular, the degree of $v$ is $k+1$ if none of these ``bad" events occur, but could be as high as $k + 2+1/\eps$.
    This motivates the following definition of a good $A_k$.

    We say that a large $A_k$ is \emph{good} if for all $s$ such that $|s-k|\le 2/\eps$ the following hold.
    \begin{enumerate}
        \item Each $z \in A_s$ is connected to exactly one of $x$ and $y$.
        \item Each $z \in A_s$ has no neighbours in $A$, i.e. $\Gamma(z) \cap A = \emptyset$.
    \end{enumerate}
    We otherwise say that $A_k$ is \emph{bad}. We wish to show that there are many good $A_k$.

    Suppose that $R_i$ holds for $i=1,\ldots,4$. We claim we have few bad $A_k$. Indeed, we have at most $(np)^{1/4}$ small $A_k$. Each vertex in $\Gamma(x) \cap \Gamma(y)$ causes at most $5/\eps$ sets $A_k$ to fail condition (1), so altogether the (at most 6) vertices in $\Gamma(x) \cap \Gamma(y)$ cause at most $30/\eps$ bad $A_k$. Similarly each edge inside $A$ causes at most $10/\eps$ (doubled for each end of the edge) $A_k$ to fail condition (2), and these edges cause at most $10/\eps^2$ bad $A_k$. Altogether we have $O\left((np)^{1/4}\right)$ bad $A_k$, and so we have at least $\tfrac{1}{3}\sqrt{np\log(np)}$ good $A_k$ for sufficiently large $n$.

    Recall that when $D(x) = D(y)$, for each good $A_k$ we must have $|A_k \cap \Gamma(y)| = |A_k \cap \Gamma(x)|$. Each vertex in a good $A_k$ is adjacent to $x$ with probability $1/2$ and otherwise adjacent to $y$, and so, independently for each good $A_k$, the quantity $|A_k \cap \Gamma(x)|$ is distributed like a $\Bin(|A_k|,1/2)$ random variable. For every $m \geq 1$, the probability that a $\Bin(m, 1/2)$ random variable takes the value $m/2$, is at most $1/\sqrt{m}$. Hence, for every good $A_k$, the probability exactly half of the vertices in $A_k$ are connected to $x$ (and half to $y$) is at most $(np)^{-1/8}$. Assuming that the events $R_1$, $R_2$, $R_3$ and $R_4$ all happen, the number of good $A_k$ is at least $\tfrac{1}{3}\sqrt{np\log(np)}$. This means that
    \[
        \mathbb{P}\left(D(x) = D(y) | R_1, \dots, R_4\right) \leq \left(\frac{1}{(np)^{1/8}}\right)^{\tfrac{1}{3}\sqrt{np\log(np)}}\\
        = \exp(-\tfrac{1}{24} \sqrt{np} \log^{3/2} (np)).\]
    Since $p \ge \zeta^2\tfrac{\log^2 n}{n(\log\log n)^3}$, this is at most
    $\exp( -\tfrac{1}{6} \zeta \log n)$
    for large enough $n$, and this is $o(n^{-2})$ for large enough $\zeta$. By Claims \ref{Asize}, \ref{badsmatter} and \ref{negcor} the probability that any of $R_1$ $R_2$, $R_3$ and $R_4$ do not hold is also $o(n^{-2})$, and this proves the result for $\beta = \zeta^2$.

\end{proof}

It remains to prove the claims.

\begin{proof}[Proof of \Cref{Asize}]
    First, note that $d(x)-1\le |A| \le d(x)+d(y)$, so it suffices to bound $d(x)$ and $d(y)$.
    Using \Cref{lem:chernoff} we have
    \[\bP(d(x) - 1 \leq np/2) \leq
        \exp(- (1+o(1))np/8) = o(n^{-2}),\]
    which proves the first inequality. For the second inequality, note that at least one of  $d(x)$ and $d(y)$ must be at least $3np/2$, and we can again use \Cref{lem:chernoff} to bound this as follows.
    \[\bP(d(x) + d(y) \geq 3 np) \leq 2 \bP(d(x) \geq 3np/2) \leq 2\exp(-np/10) = o(n^{-2}). \qedhere\]
\end{proof}

\begin{proof}[Proof of \Cref{badsmatter}]
    \begin{itemize}
        \item[(i)] Note that independently each $z \neq x,y$ is connected to both $x$ and $y$ with probability $p^2.$ Thus, $|\Gamma(x) \cap \Gamma(y)|$ is distributed like a $\Bin(n-2,p^2)$ random variable and
            \begin{align*}
                \prob{|\Gamma(x) \cap \Gamma(y)| \ge 6} \le e n^6p^{12} = O(n^{-2 - 12\eps}) = o(n^{-2}).
            \end{align*}
        \item[(ii)] Conditional on $R_1$, the number of edges inside $A$ is stochastically dominated by a $\Bin\left(6(np)^2,p\right)$ random variable, and using \Cref{lem:small-mean}

            \[ \prob{\Bin\left(6(np)^2,p\right) \ge 1/\eps} \leq e (6 n^{2}p^{3})^{1/\eps} \leq e (6 n^{-3\eps})^{1/\eps} = o\left(n^{-2}\right).\qedhere\]
    \end{itemize}
\end{proof}

\begin{proof}[Proof of \Cref{negcor}]
    For each $z \in A$, define $d'(z) = |\Gamma(z) \setminus \left(A \cup\{x,y\}\right)|$. Conditionally given $|A|$, the $d'(z)$ are distributed like independent $\Bin(n-(|A|+2),p)$ random variables. Hence, for $r \in \mathbb{N}$ such that $|r-np| \le \tfrac{1}{4}\sqrt{np\log(np)}$ and $m \in [np/2,3np]$,
    \Cref{thm:DML} gives
    \begin{align*}
        \mathbb{P}(d'(z) = r \mid |A| = m) & = \prob{\Bin(n-m-2,p) = r}                                                                                                                     \\
                                           & \geq (1+o(1)) \frac{1}{\sqrt{2 \pi np}} \exp\left( -\frac{ \left(\frac{1}{4} \sqrt{np \log(np)} + 4np^2\right)^2}{2(1-p) (np - 4np^2)} \right) \\
                                           & = (1 + o(1)) \frac{1}{\sqrt{2 \pi np}} \exp\left( - (1 + o(1)) \frac{\log(np)}{32}\right)                                                      \\
                                           & = \frac{1}{\sqrt{2 \pi}} (np)^{-\frac{1+o(1)}{32} - \frac{1}{2}}.
    \end{align*}
    For large enough $n$, this is certainly at least $(np)^{-5/8}$.

    Given $|A| = m \in [np/2,3np]$, each $|A_r|$ stochastically dominates a $\Bin\left(m,(np)^{-5/8}\right)$ random variable and, for any values $r_1, \dots, r_k$ such that $|r_i - np| \leq \tfrac{1}{4} \sqrt{np \log(np)}$ for all $i \in [k]$, the number of vertices in $A_{r_1} \cup \dotsb \cup A_{r_k}$ dominates a $\Bin(m, k (np)^{-5/8})$ random variable. If $A_{r_1}, \dots, A_{r_k}$ are all small, then this set contains at most $k(np)^{1/4}$ vertices and, by Lemma~\ref{lem:chernoff}, we have
    \[\mathbb{P} \left( \Bin(m, k(np)^{-5/8}) \leq k(np)^{1/4} \right) \leq \exp \left(-\frac{(1 - 2(np)^{-1/8})^2k(np)^{3/8}}{4}\right).\]
    Rather crudely, there are at most $(\sqrt{np \log (np)})^{(np)^{1/4} + 1}$ ways of choosing $\ceil{(np)^{1/4}}$ of the $A_r$, and the probability that all of the chosen $A_r$ are small is at most $\exp (-(np)^{5/8}/8)$ for large enough $n$.
    Hence, for large enough $n$, the probability that there are at least $(np)^{1/4}$ small $A_r$ is at most
    \[\left(\sqrt{np \log (np)}\right)^{(np)^{1/4} + 1} \exp \left(-(np)^{5/8}/8\right) = o(n^{-2}). \]
\end{proof}

\subsection{Uniqueness of 3-balls}\label{sec:Unique3}
We next turn to the proof of \Cref{clauniq3}. Recall that $\tfrac{\log^{2/3}n}{n} \le p\le  \frac{\log^2 n}{n}$, and we aim to show that with high probability the $3$-balls around vertices with degree at least $np/2$ are unique. This is done by considering the degree sequences of the neighbours of a vertex. That is, for a vertex $x$ we consider the collection of multisets of the form $\left\{d(w):w \in \Gamma\left(u\right)\right\}$, for each neighbour $u$ of $x$. Given two vertices $x$ and $y$, it would be nice to appeal to a level of independence and assume the degrees of vertices at distance $2$ from $x$ or $y$ are i.i.d. binomial random variables. Therefore, our first step in the proof is to restrict ourselves to parts of the $2$-balls around $x$ and $y$ which do not interact or overlap so that we may assume this independence. We then bound the probability that two multisets of i.i.d. binomial random variables are equal, and finally pull everything together and appeal to a union bound over pairs of vertices $x$ and $y$.

\begin{proof}[Proof of \Cref{clauniq3}]
    Let $G \in \mathcal{G}(n,p)$, and fix two vertices $x, y \in V(G)$. Suppose that $d = d(x) = d(y)$, and denote the neighbourhoods of $x$ and $y$ by $\{u_1,\ldots,u_{d}\}$ and $\{v_1,\ldots,v_{d}\}$ respectively.
    For a  vertex $w \in V(G)$, let $D(w)$ be the multiset of the degrees of the neighbours of $w$, that is, $D(w) = \{ d(z) : z \in \Gamma(w)\}$. Let $\mathcal{D}_x=\{D(u_i): i \in [d] \}$, and $\mathcal{D}_y=\{D(v_i) : i \in [d] \}$. Clearly, if the 3-balls around $x$ and $y$ are isomorphic, then $\mathcal{D}_x=\mathcal{D}_y$ as multisets, and we will show that the probability that this happens is $o(n^{-2})$.

    \begin{figure}
        \centering
        \begin{tikzpicture}
            \draw (-1,0) -- (1,0);
            \draw (3 - 0.08, -2*0.489898) -- (1,0) -- (3 - 0.08, 2*0.489898);
            \draw (-3 + 0.08, -2*0.489898) -- (-1,0) -- (-3 + 0.08, 2*0.489898);
    
            \draw (-1,0) -- (0,1) -- (1,0);
            \draw[fill, red] (0,1) circle (.05);
    
            \draw (-2,-0.3) -- (-2, 0.3);
            \draw[fill, red] (-2,-0.3) circle (.05);
            \draw[fill, red] (-2,0.3) circle (.05);
    
            \draw (2,0.3) -- (3, 0.3) -- (2, 0);
    
            \draw[fill, red] (2,0.3) circle (.05);
            \draw[fill, red] (2,0) circle (.05);
            \draw[fill, black] (3,0.3) circle (.05);
    
            \draw (2, -0.3) -- (3, -0.15) -- (3, -0.45);
            \draw[fill, black] (3,-0.15) circle (.05);
            \draw[fill, black] (3,-0.45) circle (.05);
            \draw[fill, red] (2,-0.3) circle (.05);
    
            \draw[fill] (1,0) circle (.05);
            \node[below] at (1,0) {$y$};
            \draw[fill] (-1,0) circle (.05);
            \node[below] at (-1,0) {$x$};
            \draw  (2,0) ellipse (0.2 and 0.5);
            \draw  (3,0) ellipse (0.4 and 1);
            \draw (-2,0) ellipse (0.2 and 0.5);
            \draw (-3,0) ellipse (0.4 and 1);
    
        \end{tikzpicture}
        \caption{An edge $xy$ with examples of vertices failing the conditions 2, 3 and 4 shown in red.}
        \label{fig:bad-vertices}
    \end{figure}
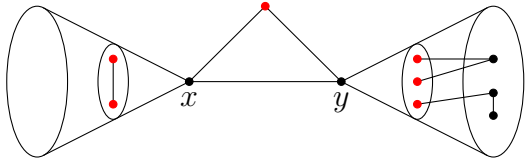

    We say that a vertex $v \in \Gamma(x) \cup \Gamma(y)$ is \emph{bad} if any of the following hold, and otherwise we say that it is \emph{good}. See \Cref{fig:bad-vertices} for examples of vertices which fail conditions 2, 3 and 4.
    \begin{enumerate}
        \item $v \in \{x,y\}$,
        \item $v$ is adjacent to both $x$ and $y$,
        \item $v$ is adjacent to a vertex in $(\Gamma(x) \cup \Gamma(y)) \setminus \{x,y\}$,
        \item there is a neighbour of $v$ adjacent to a vertex at distance at most 2 from $x$ or $y$ which is not $v$, and
        \item the degree of $v$ is less than $np/2$.
    \end{enumerate}

    We first claim that, with probability $1 - o(n^{-2})$, there are at most $2\log^{1/2} n$ bad vertices. Note that we will only be interested in applying this when $d \geq np/2 \geq \log^{2/3} (n)/2$, and so the proportion of bad vertices will tend to 0.

    \begin{claim}\label{claim:3-good}
        For any two vertices $x$ and $y$, the number of bad vertices in $\Gamma(x) \cup \Gamma(y)$ is at most $2 \log^{1/2} n$ with probability $1 - o(n^{-2})$.
    \end{claim}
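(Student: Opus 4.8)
The plan is to bound, for a fixed pair $x,y$, the number of bad vertices of each of the five types separately and then take a union bound over the types. We will show that types (1)--(4) each contribute only $O(1)$ bad vertices with probability $1-o(n^{-2})$, while type (5) contributes fewer than $\log^{1/2}n$ bad vertices with probability $1-o(n^{-2})$; since the total is then at most $\log^{1/2}n+O(1)\le 2\log^{1/2}n$ for large $n$, the claim follows. Type (1) is trivial, contributing at most $2$ vertices.

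For types (2), (3) and (4), the key observation is that a bad vertex of one of these types must lie on (or within distance $1$ of) a small subgraph that contains a cycle and is attached to $x$ or $y$. Concretely: a type-(2) vertex is internal to a path of length $2$ between $x$ and $y$; a type-(3) vertex lies on a triangle through $x$ or $y$, or is internal to a path of length $3$ between $x$ and $y$; and (using conditions (2) and (3) to discard degenerate cases) a type-(4) vertex lies on a cycle of bounded length through $x$ or $y$, or is internal to a path of bounded length between $x$ and $y$, or lies within distance $1$ of a cyclic subgraph on a bounded number of vertices attached to $x$ or $y$ within distance $2$. In each case it suffices to prove that, for a fixed pair $x,y$ and each relevant shape, the number of copies attached at $x$ (or at $y$) is at most a large absolute constant $K$ with probability $1-o(n^{-2})$. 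One obtains this by a union bound over isomorphism types $H$ of the union of $K$ such copies: any such $H$ is connected, has a bounded number of vertices, and — since a graph with cyclomatic number $\mu$ contains at most $2^{\mu}-1$ cycles and at most $2^{\mu}$ distinct paths between a fixed pair of vertices — has cyclomatic number at least $3$ once $K$ is a large enough constant. Hence $\prob{H\subseteq G}\le n^{|V(H)|-1}p^{|E(H)|}\le (np)^{|V(H)|-1}p^{3}\le (\log^{2}n)^{O(1)}\bigl(\tfrac{\log^{2}n}{n}\bigr)^{3}=o(n^{-2})$, using $p\le \log^{2}n/n$; since there are only boundedly many such isomorphism types, and boundedly many ways to present a given $H$ as a union of $K$ copies, the union bound is harmless.

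For type (5) we first note that $\prob{d(x)\ge \log^{3}n}\le (enp/\log^{3}n)^{\log^{3}n}=o(n^{-3})$ since $np\le\log^{2}n$, so with probability $1-o(n^{-2})$ we have $|\Gamma(x)\cup\Gamma(y)|\le 2\log^{3}n$; condition on $\Gamma(x)$ and $\Gamma(y)$. For $v\in\Gamma(x)\cup\Gamma(y)$ let $d^{-}(v)$ be the number of neighbours of $v$ outside $\{x,y\}\cup\Gamma(x)\cup\Gamma(y)$; then $d(v)\ge d^{-}(v)$, so every type-(5) vertex satisfies $d^{-}(v)<np/2$, and crucially the variables $\bigl(d^{-}(v)\bigr)_{v\in\Gamma(x)\cup\Gamma(y)}$ are independent, each distributed as $\Bin(n-O(\log^{3}n),p)$ with mean $(1-o(1))np$. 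Since $np\ge\log^{2/3}n$, a Chernoff bound (\Cref{lem:chernoff}) gives $\prob{d^{-}(v)<np/2}\le \exp(-np/9)\le \expb{-\tfrac19\log^{2/3}n}$ for large $n$, so the number of type-(5) bad vertices is stochastically dominated by a $\Bin\bigl(2\log^{3}n,\ \expb{-\tfrac19\log^{2/3}n}\bigr)$ variable. Therefore
\[\prob{\#\{\text{type-(5) bad}\}\ge \log^{1/2}n}\ \le\ \Bigl(2e\log^{3}n\cdot \expb{-\tfrac19\log^{2/3}n}\Bigr)^{\log^{1/2}n}\ \le\ \expb{-\tfrac{1}{10}\log^{7/6}n}\ =\ o(n^{-2}),\]
for $n$ large, since $\tfrac1{10}\log^{7/6}n\gg \log n$. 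Combining the bounds for the five types proves the claim.

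\textbf{Main obstacle.} The main obstacle is obtaining tail probabilities of size $o(n^{-2})$, rather than merely $o(1)$, for the subgraph counts governing types (2)--(4): one cannot just apply Markov's inequality to the expected number of short cycles through $x$, and some care is needed both in extracting enough cyclomatic number from a large family of cycles or $x$--$y$ paths and in bounding the (bounded but nonzero) number of ways a fixed graph can arise as such a family. Type (5) is comparatively routine once the independence of the external degrees $d^{-}(v)$ is isolated; the $\log^{1/2}n$ slack is genuinely needed because near the bottom of the range $\exp(-\Theta(np))$ is only $\exp(-\Theta(\log^{2/3}n))$, which is too large to exclude all type-(5) bad vertices.
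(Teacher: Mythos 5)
Your proposal is correct in outline and, for the decisive part of the claim, follows the same route as the paper: both proofs isolate condition (5), observe that the external degrees $d^{-}(v)$ (neighbours outside $\{x,y\}\cup\Gamma(x)\cup\Gamma(y)$) are independent binomials once the edges at $x$ and $y$ are revealed, apply a Chernoff bound to get a per-vertex failure probability $\exp(-\Theta(np))$, and then use a small-mean binomial tail (as in \Cref{lem:small-mean}) to show that more than $\log^{1/2}n$ such vertices occur with probability $o(n^{-2})$; this is also where the $\log^{1/2}n$ slack is genuinely needed, exactly as you say. Where you diverge is in conditions (2)--(4): the paper bounds each by a bespoke count of a specific structure (common neighbours of $x$ and $y$ via a $\Bin(n,p^2)$ tail, edges inside $\Gamma(x)\cup\Gamma(y)$, external vertices with at least two neighbours in $\Gamma(x)\cup\Gamma(y)$, and edges inside $\Gamma(W)$), each handled directly with \Cref{lem:small-mean} or \Cref{lem:chernoff}, giving explicit constants such as $2$, $6$ and $4$. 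You instead run a uniform union bound over bounded-size rooted subgraphs attached to $x$ or $y$ with cyclomatic number at least $3$, using the facts that a graph with cyclomatic number $\mu$ has at most $2^{\mu}-1$ cycles and at most $2^{\mu}$ paths between a fixed pair; with $p\le \log^2 n/n$ this gives $o(n^{-2})$ per rooted type and hence an $O(1)$ bound per condition. Both work; the paper's version is shorter and more explicit, while yours is more robust (it would apply verbatim to any ``badness'' condition witnessed by a short cyclic structure or short $x$--$y$ path). Two details you should make explicit when writing this up: first, to deduce cyclomatic number at least $3$ from $K$ bad vertices you need that each witness cycle or $x$--$y$ path accounts for only boundedly many bad vertices --- in the non-degenerate cases (after discarding vertices already failing (2) or (3), as you indicate) the bad vertex in fact lies \emph{on} its witness cycle or path, which is the cleanest way to see this; if you allow the ``within distance $1$ of a cycle'' formulation you need an extra pigeonhole over attachment vertices, since a priori one cycle could serve many bad vertices. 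Second, the union of $K$ witnesses need not be connected (some attach at $x$, some at $y$); either pass to the majority side or redo the edge/vertex accounting with two components rooted at $x$ and $y$, which costs one edge but also one free vertex and so still yields $o(n^{-2})$.
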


    We now reveal the $2$-balls around $x$ and $y$. If $d(x) \neq d(y)$, then the $3$-balls are not isomorphic and we are done, and if $d = d(x) = d(y)$ is less than $np/2$, there is nothing to prove. From the $2$-balls, we can also check which of the vertices in $\Gamma(x) \cup \Gamma(y)$ are bad, and we assume that there are at most $2\log^{1/2} n$ of them. The degree of a vertex is dominated by a $\Bin(n, \log^2  n/n)$ random variable so, by \Cref{lem:chernoff}, we may also assume that $d \leq 2 \log^2 n$ and that the union of the $2$-balls around $x$ and $y$ contains at most $9\log^4 n$ vertices.
    If $w$ is a neighbour of a good vertex (and not $x$ or $y$), then $d(w) - 1$ is a binomial random variable, and moreover, the degrees for such vertices are i.i.d.\ random variables. Hence, if $u_i$ is a good vertex, the set $D(u_i)$ consists of $d(u_i)$ i.i.d.\ binomial random variables with at least $n - 9 \log^4  n$ trials and success probability $p$. The following claim shows that the probability that $D(u_i) = D(v_j)$ is small (for $i$ and $j$ such that $u_i$ and $v_j$ are both good).

    \begin{claim}\label{claim:equal-multisets}
        Let $A_1, \dots, A_d$ and $B_1, \dots, B_d$ be i.i.d.\ binomial random variables with $n - \sqrt{n} \leq N  \leq n$ trials and success probability $p \leq 1/2$, and suppose that $d \geq np/2$. If $np \to \infty$, then the probability that $A = \{A_1, \dots, A_d\}$ and $B = \{B_1, \dots, B_d\}$ are equal as multisets is at most \(\expb{-\Omega(\sqrt{np}\log(np))}.\)
    \end{claim}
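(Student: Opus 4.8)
The plan is to argue that already the multiset of multiplicities of a short window of values near the mean cannot match, except with the stated probability. For an integer $k$ write $a_k$ for the number of $A_i$ equal to $k$ and $b_k$ for the number of $B_j$ equal to $k$; if $A = B$ as multisets then $a_k = b_k$ for every $k$. I will only use the window $k_i := \lceil Np \rceil + i$ for $i = 1, \dots, \ell$, with $\ell := \lfloor \sqrt{np} \rfloor$. Since $p \le 1/2$, the variance $\sigma^2 = Np(1-p)$ of $\Bin(N,p)$ satisfies $\sigma^2 = \Theta(np) \to \infty$, and each $k_i$ has $0 \le k_i - Np \le \ell + 1 = O(\sqrt{np}) = o(\sigma^{5/4})$; hence Theorem~\ref{thm:DML} gives $\bP(\Bin(N,p) = k_i) = \Theta(1/\sqrt{np})$ uniformly over $i \in [\ell]$, because the Gaussian factor $e^{-(k_i - Np)^2/2\sigma^2}$ stays bounded between two positive constants on this window.

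Next I bound, uniformly over any prescribed target multiplicities $(t_i)_{i\in[\ell]}$, the probability that $b_{k_i} = t_i$ for all $i$, by revealing the $B_j$ one bucket at a time: at stage $i$ I expose which of the still-unclassified $B_j$ equal $k_i$, and let $\mathcal F_{i-1}$ be the information from the first $i-1$ stages. By Fact~\ref{lem:median} each $B_j$ lies in $\{k_1,\dots,k_\ell\}\subseteq\{k:k>\lceil Np\rceil\}$ with probability at most $1/2$, so conditioning a still-unclassified $B_j$ on avoiding $\{k_1,\dots,k_{i-1}\}$ distorts its law by at most a factor $2$, and in particular it still equals $k_i$ with probability $\Theta(1/\sqrt{np})$. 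A Chernoff bound shows that, off an event of probability $\exp(-\Omega(np))$, at most $3d/4$ of the $B_j$ ever fall into these $\ell$ buckets, so at every stage at least $d/4 \ge np/8$ of them remain unclassified. Conditional on $\mathcal F_{i-1}$ (and on this good event), $b_{k_i}$ is a sum of at least $np/8$ independent Bernoulli variables each of success probability $\Theta(1/\sqrt{np})$, so $\sum(1-p_j) = \Omega(\sqrt{np})$ and Rogozin's theorem (Theorem~\ref{thm:sup}) yields $\sup_x \bP(b_{k_i} = x \mid \mathcal F_{i-1}) = O((np)^{-1/4})$. Telescoping over the $\ell$ stages through the nested $\mathcal F_{i-1}$-measurable events $\{\sum_{j<i} b_{k_j} \le 3d/4\}$, and adding back the $\exp(-\Omega(np))$ bad event (which is negligible as $np \gg (\log np)^2$), the probability of matching all targets is $\big(O((np)^{-1/4})\big)^{\ell} + \exp(-\Omega(np)) = \exp(-\Omega(\sqrt{np}\,\log(np)))$. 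Since this bound is uniform in $(t_i)$, averaging $\bP(B = A \mid A)$ over $A$ gives the claimed bound for $\bP(A = B \text{ as multisets})$.

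The main obstacle is the dependence among $b_{k_1},\dots,b_{k_\ell}$: they are functions of the single family $B_1,\dots,B_d$, so they are not independent and cannot be handled by a naive product bound. This is resolved by the staged revelation together with Fact~\ref{lem:median}: exposing the earlier buckets changes the conditional law of each remaining $B_j$ by at most a constant factor and removes at most a constant fraction of them, so every later count still receives $\Omega(\sqrt{np})$ genuinely independent contributions and hence has maximum atom $O((np)^{-1/4})$ by Rogozin's theorem. A secondary technical point is to keep the window short — length $\Theta(\sqrt{np})$ rather than $\Theta(\sqrt{np\log(np)})$ — so that every $k_i$ stays inside the validity range $|h|\le\sigma^{5/4}$ of the local limit theorem and each per-value probability remains $\Theta(1/\sqrt{np})$; the extra logarithmic factor in the exponent then appears simply from raising $(np)^{-1/4}$ to the power $\ell = \Theta(\sqrt{np})$.
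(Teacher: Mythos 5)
Your proof is correct and follows essentially the same route as the paper: a window of $\Theta(\sqrt{np})$ values just above the mean, the median fact to control the conditioning distortion and to guarantee at least $d/4$ unclassified variables at every stage, Theorem~\ref{thm:DML} for the $\Theta(1/\sqrt{np})$ per-value probability, and an anti-concentration bound (you via Theorem~\ref{thm:sup}, giving $O((np)^{-1/4})$ per stage) iterated over the $\ell$ target values. The only difference is cosmetic — you fix $A$ as the targets and sequentially reveal the $B$'s, while the paper reveals the $B$-counts and processes the $A$'s — which is immaterial by symmetry.
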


    If $\mathcal{D}_x$ and $\mathcal{D}_y$ are equal as multisets, then there is a permutation $\sigma$ such that $D(u_i) = D(v_{\sigma(i)})$ for all $i \in [d]$. We show that, given that there are not too many bad vertices, the probability this holds for any particular choice of $\sigma$ is $o(1/(n^2 d!))$, and a union bound over the possible permutations and then the choices for $x$ and $y$ completes the proof. Let $\pi$ be a permutation of $[d]$, and consider each $i = 1, \dots, d$ in turn. If at least one of $u_i$ or $v_{\pi(i)}$ is bad, we continue onto the next $i$. If neither $u_i$ nor $v_{\pi(i)}$ is bad, then \Cref{claim:equal-multisets} shows that the probability that $D(u_i) = D(v_{\pi(i)})$ is at most $\expb{-\Omega(\sqrt{np}\log(np))}$. Since we have assumed that there are at most $2\log^{1/2}n$ vertices which are bad, we skip at most $4 \log^{1/2}n$ choices for $i$. Hence, the probability that $D(u_i) = D(v_{\pi(i)})$ for all $i \in [d]$ is at most $\expb{-\Omega(d\sqrt{np}\log(np))}$.
    By the union bound, the probability that $\mathcal{D}_x$ and $\mathcal{D}_y$ are equal is at most
    \[\prob{\mathcal{D}_x = \mathcal{D}_y}  = o(n^{-2})  +  \expb{-\Omega(d\sqrt{np}\log(np)) + d \log d}.\]
    Since $np \to \infty$ and $d \leq 2 (np)^{3}$, this is
    \(o(n^{-2}) + \expb{-\Omega(d\sqrt{np}\log(np))} = o(n^{-2})\).

    Finally, taking a union bound over the vertices $x$ and $y$ completes the proof.
\end{proof}

We now prove the two claims made in the proof above.

\begin{proof}[Proof of \Cref{claim:3-good}]
    We will bound the number of vertices that fail each of the conditions in the definition of being good. Clearly at most two vertices fail the first condition. The number of vertices which fail the second condition is given by a $\Bin(n - 2, p^2)$ random variable, which is dominated by a $\Bin(n, \log^4 (n)/n^2)$ random variable. Hence, using \Cref{lem:small-mean}, the probability there are at least three vertices which fail the second condition is at most $e\log^{12} (n)/n^3 = o(n^{-2})$.

    Consider the vertices in $\Gamma(x) \cup \Gamma(y)$ which are not one of $x$ or $y$. Using \Cref{lem:chernoff}, we may assume that there are at most $4 \log^2 n$ of them. At this point, we have only revealed the edges incident to $x$ and $y$, and so each edge $uv$ between two of these vertices is present independently with probability $p$. Hence, the number of such edges is at most $3$ with probability $o(n^{-2})$, and at most six vertices fail the third condition.

    We split the fourth condition into two parts. First, we consider the number of $v$ that fail due to one of their neighbours being adjacent to another vertex in $\Gamma(x) \cup \Gamma(y)$. A vertex $z \not \in \{x,y\} \cup \Gamma(x) \cup \Gamma(y)$ has a binomial number of neighbours in $\Gamma(x) \cup \Gamma(y)$ with at most $4 \log^2 n$ trials and success probability at most $\log^2 (n)/n$. Hence, the probability that $z$ has at least $4$ such neighbours is $o(n^{-3})$, and with probability $ 1- o(n^{-2})$, there is no choice for $z$ with at least $4$ neighbours. The probability that a vertex $z \not \in \{x,y\} \cup \Gamma(x) \cup \Gamma(y)$ has at least two neighbours in $\Gamma(x) \cup \Gamma(y)$ is at most $e(4 p\log^2 n)^2$, and so the number of such $z$ is at dominated by a $\Bin(n, 16 e \log^8 (n)/n^2)$ random variable. In particular, with probability $1 - o(n^{-2})$, there are at most $2$ vertices adjacent to least 2 vertices in $\Gamma(x) \cup \Gamma(y)$, and they are adjacent to at most 3 vertices. Hence, at most six vertices fail the first part of the fourth condition.

    Let $W$ be the set of $v \in \Gamma(x) \cup \Gamma(y)$ which have not already failed. We can reveal the set $W$ by checking the edges from $x$ and $y$ and from $\Gamma(x)$ and $\Gamma(y)$, and note that we may assume that $|\Gamma(W) \setminus \{x, y\}| \leq 4 \log^2 n$ as this happens with probability $1 - o(n^{-2})$. Hence, the number of edges between vertices in  $\Gamma(W) \setminus \{x,y\}$ is dominated by a $\Bin(16 \log^4 n, \log^2 (n)/n)$ random variable. In particular, there are at most two edges with probability $ 1 - o(n^{-2})$. Each of these can rule out at most two $v \in W$. Hence, at most a further four $v$ fail here.

    Let $W' = (\Gamma(x) \cup \Gamma(y)) \setminus \{x,y\}$. We now consider the number of vertices in $W'$ which have degree less than $np/2$. Such a vertex must have less than $np/2$ neighbours in $V \setminus (\{x,y\} \cup \Gamma(x) \cup \Gamma(y))$. We assume that we have revealed the edges from $x$ and $y$ and the edges between vertices in $\Gamma(x) \cup \Gamma(y)$, but no other edges. We may assume that there are at most $4 \log^2 n$ vertices in $\Gamma(x) \cup \Gamma(y)$. For a given vertex in $v \in W'$, the number of neighbours in $V \setminus (\{x,y\} \cup \Gamma(x) \cup \Gamma(y))$ dominates a binomial random variable with $n - 4 \log^2 n - 2$ trials and success probability $p$. Hence, the probability that it is less than $np/2$ is at most
    \(\expb{-np/16}\)
    for large enough $n$. Since each vertex $v \in W'$ satisfies this independently, the number of vertices in $W$ which have degree less than $np/2$ is dominated by a binomial random variable with $4\log^2 n$ trials and success probability $\exp(-np/16)$. Hence, the probability there are more than $\log^{1/2} n$ such vertices is at most
    \[e \left( 4 \log^2 (n) \exp\left(-\tfrac{\log^{2/3} n}{16}\right)\right)^{\log^{1/2} n} = \exp\left( \log^{1/2} n \left(\Theta(\log \log n) - \Theta(\log^{2/3} n)\right)\right),\]
    which is $o(n^{-2})$. Hence, with probability $o(n^{-2})$, the number of vertices which are bad is at most $2 + 2 + 6 + 4 + \log^{1/2} n$, as required.
\end{proof}

We now prove \Cref{claim:equal-multisets}. The general strategy here is similar to the approach used in \Cref{lem:2-neighbourhoods} when we also wanted to show that the probability that two multisets were equal was small: we count the number of $A_i$ and $B_i$ which are equal to $k$ for $\sqrt{np}$ values of $k$ close to the mean. The probability that these two quantities are equal is $O(1/\sqrt{dnp})$, and this holds even after we have revealed this for $\sqrt{np}$ choices of $k$. However, while the general strategy is similar, this time it is much simpler as the $A_i$ and $B_i$ are i.i.d. binomial random variables.

\begin{proof}[Proof of \Cref{claim:equal-multisets}]
    Let $Z_k$ be the number of $A_1, \dots, A_{d}$ which are equal to $k$ and similarly define $Z_{k}'$ to be the number of $B_1, \dots, B_{d}$ equal to $k$. Let $\ell = \ceil{\sqrt{np}} - 2$, and define $k_i = \ceil{np} + i$ for $i \in [\ell]$. By \Cref{lem:median}, we have
    \[\bP\left(B_1  \in \{k_1, \dots, k_\ell\}\right) \leq \bP \left( B_1 > \ceil{Np}\right) \leq 1/2.\]
    Hence,
    \begin{align*}
        \bP \left(Z'_{k_1} + \dotsb + Z'_{k_\ell} \geq 3d/4\right) \leq \prob{\Bin(d, 1/2) \geq 3d/4}
        \leq \exp(-d/20).
    \end{align*}
    Suppose that $Z'_{k_1} + \dotsb + Z'_{k_\ell} \leq 3d/4$ and reveal the values $Z'_{k_i}$, which we call our \emph{target values}. We will iteratively reveal the $A_j$ which are equal to $k_i$, and check if there are $Z_{k_i}'$ of them. Suppose we are about to reveal the $A_j$ equal to $k_i$, so we have already revealed the values $Z_{k_1}, \dots, Z_{k_{i-1}}$ and they are equal to $Z_{k_1}', \dots, Z_{k_{i-1}}'$. We will show that the probability that $Z_{k_i}$ is equal to $Z'_{k_i}$ is $O(1/\sqrt{np})$. Suppose that $A_j$ has not been revealed, so we know that $A_j$ is not equal to $k_1, \dots, k_{i-1}$. We have
    \[|k_{i} - Np| \leq |k_{i} - np| + |Np - np| \leq i + 1 + p\sqrt{n} \leq 2 \sqrt{np}\]
    for large $n$, and so by \Cref{thm:DML}, we have
    \begin{align*}
    \prob{A_1 = k_i}  &\leq (1 + o_\sigma(1)) \frac{1}{\sqrt{2 \pi N p (1-p)}},\\
    \prob{A_1 = k_i}  &\geq (1 + o_\sigma(1)) \frac{1}{\sqrt{2 \pi N p (1-p)}} \exp\left( - \frac{4 np}{2 Np (1-p)}\right),
    \end{align*}
    where the $o_\sigma(1)$ terms depend only on $\sigma$.
    Hence, for large enough $n$, there are constants $\alpha$ and $\beta$ such that 
    \[\frac{\alpha}{\sqrt{Np(1-p)}} < \prob{ A_1 = k_{i}} < \frac{\beta}{\sqrt{Np(1-p)}}.\]
    Since $\prob{A_j \in  \{k_1, \dots, k_{i-1}\}} \leq 1/2$, we have
    \[\mathbb{P}(A_j = k_i) \leq \mathbb{P}(A_j = k_{i} | A_j \not \in \{k_1, \dots, k_{i-1}\}) \leq 2\mathbb{P}(A_1 = k_i), \]
    and the probability that an unrevealed $A_j$ is equal to $k_i$ is $\Theta(1/\sqrt{Np})$.
    We have so far revealed $Z'_{k_1} + \dots Z'_{k_{i-1}}$ of the $A_j$ and there are at least $d/4$ unrevealed $A_j$, each of which independently takes the value $k_i$ with probability  $\Theta(1/\sqrt{Np})$.
    Hence, applying \Cref{thm:sup} gives
    \[ \prob{Z_{k_i} = Z_{k_i}'} \leq \sup_x \prob{Z_{k_i} = x} = O\left( \frac{1}{\sqrt{d/\sqrt{Np}}} \right) = O \left( \frac{1}{(np)^{1/4}}\right).\]
    If $A$ and $B$ are equal as multisets, then either $Z'_{k_1} + \dotsb + Z'_{k_\ell} > d/4$ or all of the steps succeed, and both of these happen with probability $\expb{-\Omega(\sqrt{np} \log(np))}$.
\end{proof}

\subsection{The set of 3-balls after swapping edges}\label{secswap}

In this section we prove \Cref{claswapr=3}, that is, we show that there is a constant $\alpha > 0$ such that if $\frac{\log^{2/3} n}{n}\leq p \leq \alpha \frac {\log^{2}  n}{n(\log\log n)^3}$, a random graph $G\in\mathcal G(n,p)$ is not 3-reconstructible with high probability. The main idea of the proof will be to show that, with high probability, there exist two edges $xy,uv$ in $G$ such that by deleting these edges and adding $xv,yu$ we obtain a graph $G'$ which is not isomorphic to $G$, but has the same collection of 3-balls.
\Cref{clauniq3} shows that we may assume the 3-balls around vertices of ``large" degree are all distinct, in which case, if $u, v, x$ and $y$ all have large degree, the graphs $G$ and $G'$ are not isomorphic.
To find the edges to swap we consider the structures $H_{uv}$ defined as follows. For an edge $uv$, let $H_{uv}$ be the subgraph $G[\Gamma_{\leq 2}(u) \cup \Gamma_{\leq 2}(v)]$ induced by the vertices at distance at most 2 from $u$ or $v$, and distinguish the edge $uv$. We will only consider the $H_{uv}$  for ``good" edges whose 5-balls are trees and where all the vertices in $H_{uv}$ have ``typical" degrees. There are many good edges but not that many isomorphism classes for the $H_{uv}$, and so, by the pigeonhole principle, there must be two edges $uv$ and $xy$ with $H_{uv} \simeq H_{xy}$. This is not quite enough to guarantee that the switch does not change the 3-balls by introducing extra edges and we will also require that the edges are far apart.

\begin{proof}[Proof of \Cref{claswapr=3}]
    Let $G \in \mathcal G(n,p)$ where $\frac{\log^{2/3} n}{n} \leq p \leq \alpha \frac{\log^2 n}{n (\log \log n)^3}$. We will show there exist  vertices $u, v, x, y$ as claimed using a pigeonhole argument over the $H_{uv}$ of good edges.  We say that an edge $uv$ is \emph{good} if $G[\Gamma_{\leq 5}(u) \cup \Gamma_{\leq 5}(v)]$ is a tree and $\left|d(z) - (n-1)p\right| < 10\sqrt{np\log(np)}$ for every $z \in \Gamma_{\leq 2}(u) \cup \Gamma_{\leq 2}(v)$. We will need the following claim which bounds the number of ``pigeonholes''.

    \begin{claim}\label{holes}
        The number of isomorphism classes for the $H_{uv}$ of the good edges is at most
        \begin{align*}
            400np\log(np)\expb{42\left((np)^{1/2}\log^{3/2}(np)\right)}
        \end{align*}
        for large enough $n$.
    \end{claim}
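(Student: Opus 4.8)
The plan is to exploit the ``good'' conditions to pin down the shape of $H_{uv}$: for a good edge $uv$ the graph $H_{uv}$ is a tree of a very restricted form, so its isomorphism type (as a tree with one distinguished edge) is encoded by just two multisets of near-equal integers, and counting such multisets is a routine binomial estimate.

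First I would determine the structure of $H_{uv}$ for a good edge $uv$. Since $G[\Gamma_{\leq 5}(u)\cup\Gamma_{\leq 5}(v)]$ is a tree and $H_{uv}=G[\Gamma_{\leq 2}(u)\cup\Gamma_{\leq 2}(v)]$ is an induced subgraph of it, $H_{uv}$ is a forest; it is connected because $uv\in E(G)$ and every vertex of $H_{uv}$ lies within distance $2$ of $u$ or of $v$, so $H_{uv}$ is a tree. Deleting the distinguished edge $uv$ splits $H_{uv}$ into a tree $T_u$ containing $u$ and a tree $T_v$ containing $v$, which we view as rooted at $u$ and $v$ respectively. For $w\in T_u$ the unique tree-path from $v$ to $w$ runs through $u$, so $\mathrm{dist}(v,w)=\mathrm{dist}(u,w)+1$; hence $w\in\Gamma_{\leq 2}(u)\cup\Gamma_{\leq 2}(v)$ forces $\mathrm{dist}(u,w)\leq 2$, and $T_u$ has depth at most $2$ (in particular $\Gamma_{\leq 2}(v)$ contributes no vertex of $T_u$ beyond depth $1$). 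Moreover a depth-$2$ vertex $z$ of $T_u$ has every non-parent neighbour at depth $3$, i.e.\ at distance $3$ from $u$ and $4$ from $v$, hence outside $H_{uv}$; so $z$ is a leaf of $H_{uv}$. Therefore $T_u$ is exactly the rooted tree in which $u$ has $d(u)-1$ children (its neighbours other than $v$), each child $w$ has $d(w)-1$ further children (its neighbours other than $u$), and all depth-$2$ vertices are leaves; the symmetric description holds for $T_v$.

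Next I would reduce to counting multisets and do the count. Each child of $u$ is, up to rooted isomorphism, a star on $d(w)-1$ leaves, so the rooted isomorphism type of $T_u$ is determined by the multiset $M_u=\{\,d(w)-1:w\text{ a child of }u\,\}$ (note $|M_u|=d(u)-1$), and hence the isomorphism type of $H_{uv}$ is determined by the unordered pair $\{M_u,M_v\}$. By goodness, every $z\in\Gamma_{\leq 2}(u)\cup\Gamma_{\leq 2}(v)$ has $d(z)$ in an interval of fewer than $20(np\log(np))^{1/2}$ integers about $(n-1)p$; hence all entries of $M_u$ lie in a fixed set $S$ with $|S|<20(np\log(np))^{1/2}$, the size $|M_u|=d(u)-1$ takes fewer than $20(np\log(np))^{1/2}$ values, and $|M_u|<(n-1)p+10(np\log(np))^{1/2}<2np$ for large $n$. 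For a fixed size $s<2np$ the number of multisets of size $s$ with entries in $S$ is $\binom{s+|S|-1}{|S|-1}\leq (s+|S|-1)^{|S|-1}\leq(3np)^{20(np\log(np))^{1/2}}$ for large $n$ (using $s+|S|<2np+20(np\log(np))^{1/2}<3np$, valid since $np\to\infty$), and $(3np)^{20(np\log(np))^{1/2}}=\expb{20(np\log(np))^{1/2}\log(3np)}\leq\expb{21(np)^{1/2}\log^{3/2}(np)}$ for large $n$ since $\log(3np)\leq\tfrac{21}{20}\log(np)$. Summing over the fewer than $20(np\log(np))^{1/2}$ choices of $s$ bounds the number of possible $M_u$ by $20(np\log(np))^{1/2}\expb{21(np)^{1/2}\log^{3/2}(np)}$; squaring this to bound the number of unordered pairs $\{M_u,M_v\}$ gives at most $400\,np\log(np)\,\expb{42(np)^{1/2}\log^{3/2}(np)}$ isomorphism classes, as claimed.

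The entire content is the structural step: that the conditions defining a good edge force $H_{uv}$ to be a depth-$2$ tree whose isomorphism type is carried by two multisets of integers from a short interval. This is where ``$G[\Gamma_{\leq 5}(u)\cup\Gamma_{\leq 5}(v)]$ is a tree'' is used (to rule out extra edges, in particular to make the depth-$2$ vertices leaves) and where the degree bound is used (to confine the multiset entries and sizes). Once that is established the count is an elementary stars-and-bars / binomial estimate, and the only point that needs a little care is the boundary of $H_{uv}$, namely checking that $\Gamma_{\leq 2}(v)$ adds no vertex of $T_u$ past depth $1$ and that a depth-$2$ vertex of $T_u$ has no neighbour inside $H_{uv}$ other than its parent.
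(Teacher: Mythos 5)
Your proposal is correct and follows essentially the same route as the paper: both reduce the isomorphism class of $H_{uv}$ for a good edge to the pair of degree multisets of the neighbours of $u$ and of $v$, confine the entries to an interval of fewer than $20\sqrt{np\log(np)}$ integers, and apply the same stars-and-bars bound $\binom{s+N-1}{N-1}\le\expb{21\sqrt{np}\log^{3/2}(np)}$ per side before squaring. The only difference is that you spell out the structural step (that goodness forces $H_{uv}$ to be a depth-$2$ tree whose type is carried by the two multisets), which the paper states without detailed justification; this is a welcome elaboration rather than a different argument.
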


    Having bounded the number of pigeonholes, we now consider the number of pigeons, or the number of good edges $uv$ in $G$. The following claim will imply that there are at least $n^2p/8$ good edges with high probability.

    \begin{claim}\label{claimz}
        With probability $1-o(1)$, the graph $G$ satisfies the following:
        \begin{enumerate}[label=(\roman*)]
            \item The number of edges of $G$ contained in a cycle of length at most $12$ is at most $\log^{24} n$.
            \item The maximum degree of $G$ is at most $\log^2 n$.
            \item The number of vertices $z$ with degree $d(z)$ such that $|d(z) - (n-1)p| > 10\sqrt{np\log(np)}$ is at most $n^{-31}p^{-32}$.
            \item $G$ contains at least $n^2p/4$ edges.
            \item The $3$-balls around vertices of degree at least $np/2$ are all distinct.
        \end{enumerate}
    \end{claim}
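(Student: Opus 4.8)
The plan is to prove the five items separately and then union bound; each is a standard estimate for $\mathcal G(n,p)$ in this range of $p$. Item (v) requires no new work: it is exactly \Cref{clauniq3}, whose hypothesis $\frac{\log^{2/3}n}{n}\le p\le\frac{\log^2 n}{n}$ contains our range. Items (ii) and (iv) are single applications of \Cref{lem:chernoff}. For (iv), $e(G)\sim\Bin(\binom n2,p)$ has mean at least $n^2p/3$, and since $n^2p\ge n\log^{2/3}n\to\infty$ a Chernoff bound gives $\prob{e(G)\le n^2p/4}\le e^{-\Omega(n^2p)}=o(1)$. For (ii), each $d(z)\sim\Bin(n-1,p)$ has mean $\mu\le\alpha\log^2 n/(\log\log n)^3$, so $\log^2 n=(1+\eps)\mu$ with $\eps\to\infty$; then for large $n$ we have $\eps^2\mu/(2+\eps)\ge\tfrac12\eps\mu=\tfrac12(\log^2 n-\mu)\ge\tfrac14\log^2 n$, whence $\prob{d(z)\ge\log^2 n}\le e^{-\log^2 n/4}=o(n^{-2})$, and a union bound over the $n$ vertices does it.

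Items (i) and (iii) are first-moment computations. For (i), the expected number of $\ell$-cycles is at most $(np)^\ell/(2\ell)$, so the expected number of edges lying in a cycle of length at most $12$ is at most $\sum_{\ell=3}^{12}\tfrac12(np)^\ell$. The key observation is that the $(\log\log n)^3$ in the denominator of $p$ makes $np=o(\log^2 n)$, so $(np)^{12}=o(\log^{24}n)$; since $np\to\infty$ the sum is dominated by its last term and is $o(\log^{24}n)$, and Markov's inequality gives (i). (This is exactly where the $(\log\log n)^3$ factor is used: with $np=\Theta(\log^2 n)$ the expectation would be $\Theta(\log^{24}n)$ and Markov would give nothing.) For (iii), write the deviation as $(1+\eps)\mu$ with $\mu=(n-1)p$ and $\eps=10\sqrt{np\log(np)}/\mu=\Theta(\sqrt{\log(np)/(np)})=o(1)$; \Cref{lem:chernoff} then gives $\prob{|d(z)-(n-1)p|>10\sqrt{np\log(np)}}\le(np)^{-40}$ for large $n$ (only the exponent being large matters, and the two tails are symmetric up to this). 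Hence the expected number of atypical vertices is at most $n(np)^{-40}$, and since $n^{-31}p^{-32}=n(np)^{-32}$ this is a $(np)^{-8}=o(1)$ fraction of the claimed bound, so Markov finishes (iii). Summing the five failure probabilities completes the proof.

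I do not expect any real obstacle here: all five items are routine applications of the first/second moment method and Chernoff bounds. The only points needing a little attention are (i), where one should notice that it is precisely the $(\log\log n)^3$ gap in the range of $p$ that makes the first-moment bound beat $\log^{24}n$ rather than merely matching it, and (iii), where one must keep (loose) track of the exponent of $np$ in the tail bound to be sure the expected number of atypical vertices comfortably beats $n^{-31}p^{-32}=n(np)^{-32}$.
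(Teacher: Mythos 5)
Your proposal is correct and follows essentially the same route as the paper: Chernoff bounds for (ii), (iii) and (iv), first-moment plus Markov for (i) and (iii) (the paper likewise uses $np=o(\log^2 n)$, i.e.\ the $(\log\log n)^3$ factor, to beat $\log^{24}n$, and gets a tail of order $(np)^{-33}$ where you get $(np)^{-40}$ — both comfortably suffice against $n^{-31}p^{-32}=n(np)^{-32}$), citing Lemma~\ref{clauniq3} for (v), and a union bound over the five events.
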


    Let us denote the subgraph of $G$ induced by the vertices at distance at most 5 from $u$ or $v$ by $N_5(u,v)$, i.e. $N_5(u,v) = G[\Gamma_{\leq 5}(u) \cup \Gamma_{\leq 5}(v)]$.
    We note that if $N_5(u,v)$ is not a tree, then it contains a cycle of length at most $12$, so it will be enough to count the number of edges $uv$ such that $N_5(u,v)$ does not contain a cycle of length at most 12 and every $z \in V(H_{uv})$ satisfies the degree condition that $|d(z) - (n-1)p| \leq 10\sqrt{np\log(np)}$. For this we will first bound the number of edges $uv$ for which there is a vertex $z\in H_{uv}$ with $|d(z) - (n-1)p| > 10\sqrt{np\log(np)}$, and then we will bound the number of edges $uv$ for which there is an edge $e\in N_5(u,v)$ that is contained in a cycle of length 12 in $G$. The sum of these two bounds will be an upper bound on the number of bad edges.

    Assume that the graph $G$ satisfies the conditions given in \Cref{claimz}. Then the second condition implies that there are at most $\log^{2k} n$ vertices in the $k$th neighbourhood of a vertex, and hence every vertex $x$ is in at most $\log^2 (n) (\log^4 n + \log^2 n + 1) \leq 2 \log^6 n$ of the $H_{uv}$. Indeed, the number of vertices $u$ such that $x\in \Gamma_{\leq 2}(u)$ is at most $1+\log^2 n+\log^4 n$, and there are at most $\log^2 n$ possible different subgraphs $H_{uv}$ for each vertex $u$. In particular, a vertex $z$ with $|d(z) - (n-1)p| > 10\sqrt{np\log(np)}$  can be contained in at most $2 \log^6 n$ subgraphs $H_{uv}$. Thus, given the third condition above, the number of edges $uv$ such that $H_{uv}$ contains such a vertex $z$ is at most $n^{-31}p^{-32} \cdot 2 \log^6 n$. Similarly, each vertex is in at most  $2 \log^{12} n$ of the $N_5(u,v)$ so clearly each edge is in at most $2 \log^{12} n$ of the $N_5(u,v)$. Thus, given the first condition above, the number of edges $uv$ such that $N_5(u,v)$ contains an edge which is in a cycle of length at most 12 is $2 \log^{12} n \cdot \log^{24}n$.
    Hence, the number of bad edges for our range of $p$ is at most
    \[ 2\log^{12} n\cdot \log^{24} n + 2 \log^{6} n \cdot n^{-31} p^{-32} \leq 2 \log^{36} n + 2 n \log^{-14} n \leq n\]
    for large enough $n$.
    
    From the fourth condition $G$ has at least $n^2p/4 \geq n \log^{2/3} (n)/4$ edges and therefore (crudely) there are at least $n^2p/8$ good $H_{uv}$ for large enough $n$.

    We now use \Cref{holes} to finish the proof. There must be some isomorphism class of $H_{uv}$ that occurs at least
    \begin{multline*}
        \frac{n^2p}{8\cdot 400np\log(np)\exp(42(np)^{1/2}\log^{3/2}(np))}
        \geq \exp(\log n -43(np)^{1/2}\log^{3/2}(np))
    \end{multline*}
    times (for large enough $n$). That is, there is some good structure $J$ which appears as $H_{uv}$ for at least this many edges $uv$. Noting that $p \leq \alpha \frac{\log^2n}{n(\log\log n)^3}$, this is at least
    \[\exp\big((1 - 43\sqrt{8\alpha})\log n\big)\geq 4 \log^{14} n + 1,\]
    if $\alpha$ is sufficiently small (and $n$ sufficiently large). Suppose that $H_{uv} \simeq J$. There are at most $2 (\log^2 n)^6$ vertices at distance at most 6 from any vertex $w$, and there are at most $4\log^{12} n$ vertices at distance at most 6 from $u$ or $v$. Hence, there are at most $4 \log^{14} n$ edges where at least one vertex is at distance at most 6 from $u$ or $v$.
    Thus, there is a good edge $xy$ such that $H_{xy} \simeq J$ and both $x$ and $y$ are at distance at least $7$ from both $u$ and $v$.

    Fix an isomorphism from $H_{uv}$ to $H_{xy}$ and suppose without loss of generality that $u$ is mapped to $x$. Let $G' = \left(G \setminus \{uv,xy\}\right) \cup \{uy, vx\}$. We claim that $G'$ has the same collection of $3$-balls as $G$ and that $G'$ is not isomorphic to $G$.

    \begin{figure}
        \centering
        \begin{tikzpicture}
            \draw (-1, 0) -- (1, -3);
            \draw[blue] (1, 0) -- (-1, -3);
    
            \draw[blue] (2, 0.3) -- (1, 0);
            \draw[blue] (5 - 0.03, 0.3 - 3*.248747) -- (2, 0.3) -- (5 - 0.03, 0.3 + 3*.248747);
            \draw[fill, purple] (2,0.3) circle (.05);
            \node[above=0.3cm] at (2, 0.3) {$w$};
            \draw[blue]  (3,0.3) ellipse (0.1 and 0.25);
            \draw[blue]  (4,0.3) ellipse (0.2 and 0.5);
            \draw[blue]  (5,0.3) ellipse (0.3 and 0.75);
    
            \draw[dashed] (-1,0) -- (1,0);
            \draw[blue] (3 - 0.08, -2*0.489898) -- (1,0) -- (3 - 0.08, 2*0.489898);
            \draw (-3 + 0.08, -2*0.489898) -- (-1,0) -- (-3 + 0.08, 2*0.489898);
    
            \draw[fill, blue] (1,0) circle (.05);
            \node[above] at (1,0) {$v$};
            \draw[fill] (-1,0) circle (.05);
            \node[above] at (-1,0) {$u$};
            \draw[blue]  (2,0) ellipse (0.2 and 0.5);
            \draw[blue]  (3,0) ellipse (0.4 and 1);
            \draw (-2,0) ellipse (0.2 and 0.5);
            \draw (-3,0) ellipse (0.4 and 1);

            \begin{scope}[shift={(0, -3)}]
                \draw[dashed] (-1,0) -- (1,0);
                \draw (3 - 0.08, -2*0.489898) -- (1,0) -- (3 - 0.08, 2*0.489898);
                \draw (-2 + 0.04, 0.489898) -- (-3 + 0.08, 2*0.489898);
                \draw (-2 + 0.04, -0.489898) -- (-3 + 0.08, -2*0.489898);
    
                \draw[fill] (1,0) circle (.05);
                \node[below] at (1,0) {$y$};
                \draw[fill, blue] (-1,0) circle (.05);
                \node[below] at (-1,0) {$x$};
                \draw (2,0) ellipse (0.2 and 0.5);
                \draw (3,0) ellipse (0.4 and 1);
    
                \draw[blue] (-2,0) ellipse (0.2 and 0.5);
                \draw (-3,0) ellipse (0.4 and 1);
                \draw[blue] (-1,0) -- (-2 + 0.04, 0.489898);
                \draw[blue] (-1,0) -- (-2 + 0.04, -0.489898);
            \end{scope}
    
            \draw[red, dashed] (5, 0.1) -- (-2, -3.1);
            \draw[fill, red] (5,0.1) circle (.05);
            \draw[fill, red] (-2,-3.1) circle (.05);
    
        \end{tikzpicture}
        \caption{The $3$-ball around a vertex $w$ in the neighbourhood of $v$ in $G'$ is shown in blue. The assumption that $H_{uv} \simeq H_{xy}$ does not rule out the existence of the red edge, but this edge would create a path from $v$ to $x$ of length $6$ in $G$.}\label{fig:switch-3}
    \end{figure}
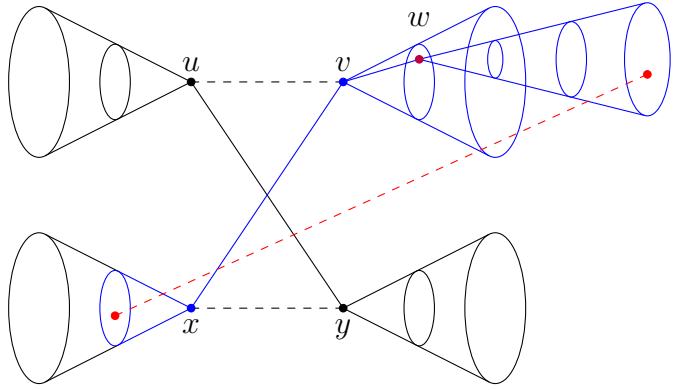

    Note that the $3$-ball of a vertex $w$ is clearly unchanged if $w$ is not in the $2$-ball of one of $u$, $v$, $x$ or $y$, so suppose it is in $\Gamma_{\leq 2}(v)$. Since $N_5(u,v)$ is a tree, the $3$-ball of $w$ in $G$ is a tree $T$. As $H_{uv} \simeq H_{xy}$ (with $u$ mapping to $x$), the $3$-ball of $u$ in $G'$ certainly contains a copy $T'$ of $T$, but this condition alone does not rule out the possibility that $w$ contains extra edges between $T' \cap T$ and $T' \setminus T$ (see \Cref{fig:switch-3} for an example).  However, any extra edge would create a cycle of length at most $7$ and it must use the edge $vx$. This means that $v$ and $x$ are at distance at most $6$ in $G$, which contradicts the choice of $xy$.

    The graphs $G$ and $G'$ cannot be isomorphic as the 3-balls around vertices of degree at least $np/2$ are unique and $G$ contains an edge between a vertex with 3-ball $N_3(u)$ and a vertex with $3$-ball $N_3(v)$ while $G'$ does not.
\end{proof}

It remains to prove our technical claims.
\begin{proof}[Proof of \Cref{holes}]
    When $uv$ is a good edge, the structure $H_{uv}$ is a tree with a distinguished edge where each vertex $z \in V(H_{uv})$ satisfies $\left|d(z) - (n-1)p\right| < 10\sqrt{np\log(np)}$. It suffices to bound the number of different options for $d(u)$, $d(v)$ and the multisets $\{d(z) : z \in \Gamma(u) \setminus v\}$ and $\{d(z) : z \in \Gamma(v) \setminus u\}$.
    The condition $\left|d(z) - (n-1)p\right| < 10\sqrt{np\log(np)}$ means that all the degrees are one of at most $N = \floor{20\sqrt{np\log(np)}} + 1$ options.
    Hence, the multiset $\{d(z) : z \in \Gamma(u) \setminus v\}$ is a multiset of $d(u) - 1$ entries spread across at most $N$ options, and so there are at most
    \begin{align*}
        \binom{d(u) + N - 2}{N - 1} & \leq (d(u) + N)^N                                                     \\
                                    & \leq \left( np + 30 \sqrt{np \log(np)}\right)^{20 \sqrt{np \log(np)} + 1} \\
                                    & \leq \exp\left(21\sqrt{np}\log^{3/2}(np)\right)
    \end{align*}
    possible multisets for large enough $n$.
    The same is true for the multiset $\{d(z) : z \in \Gamma(v) \setminus u\}$. This means there are at most
    \begin{multline*}
        \left(20\sqrt{np\log(np)} \exp\left(21(\sqrt{np}\log^{3/2}(np))\right) \right)^2\\ = 400 np\log(np)\exp\left(42\left(\sqrt{np}\log^{3/2}(np)\right)\right)
    \end{multline*}
    possible isomorphism classes for the $H_{uv}$ of a good edge, as required.
\end{proof}

\begin{proof}[Proof of \Cref{claimz}] Let $G\in \mathcal G(n,p)$. We show that each of the conditions holds with probability $1 - o(1)$, and the union bound over the five events completes the proof.
    \begin{itemize}
        \item[(i)] For each $k \in \{3,\ldots,12\}$, let $C_k$ be the number of cycles of length $k$ in $G$. Then $\E[C_k] \le n^kp^k$. For the range of $p$ that we consider, we have $np= o\left(\log^2 n\right)$ and so the expected number of edges in cycles of length at most $12$ is bounded by
            \begin{align*}
                \sum_{k=3}^{12} k\E[C_k] \le \sum_{k=3}^{12} kn^kp^k =  o\left(\log^{24} n\right).
            \end{align*}
            The claim now follows from Markov's Inequality.

        \item[(ii)] Note that the degree $d(z)$ of a vertex $z$ is distributed like a $\Bin(n-1,p)$ random variable. For large enough $n$, we have $p \leq \log^2 (n)/(2n - 2)$ and so \Cref{lem:chernoff} gives
            \begin{align*}
                \bP(d(z) \ge \log^2 n) \leq \bP\left(\Bin\big(n-1,\tfrac{\log^2 n}{2n - 2}\big) \geq \log^2 n \right) \le \exp\left(-\tfrac{1}{6}\log^2 n\right) = o(n^{-1}).
            \end{align*}
            The claim now follows from a union bound.

        \item[(iii)] Again applying \Cref{lem:chernoff} we get
            \begin{align*}
                \bP\left(|d(z) - (n-1)p| > 10\sqrt{np\log(np)}\right) \le 2\exp\left(- \tfrac{100}{3}\log(np)\right) \leq 2(np)^{-33}.
            \end{align*}
            Thus, the expected number of vertices $z$ with $|d(z) - (n-1)p| > 10\sqrt{np\log(np)}$ is bounded by $2n^{-32}p^{-33}$. We are then done by Markov's Inequality since $np \rightarrow \infty$.

        \item[(iv)] The number of edges in $G$ is distributed like a $\Bin\left(\binom{n}{2},p\right)$ random variable so the result follows from \Cref{lem:chernoff}.
        \item[(v)] This follows from \Cref{clauniq3}.
    \end{itemize}
\end{proof}

\paragraph{Acknowledgments}
The authors would like to thank the anonymous referees for their  helpful comments.

\bibliographystyle{abbrv}
\bibliography{shotgun-reconstruction}

\end{document}